\numberwithin{equation}{section}
\newtheorem{theorem}{Theorem}[section]
\newtheorem{proposition}[theorem]{Proposition}
\newtheorem{lemma}[theorem]{Lemma}
\newtheorem{corollary}[theorem]{Corollary}
\newtheorem{remark}[theorem]{Remark}
\newcommand{\nor}{\Arrowvert}
\def\e{{\varepsilon}}
\def\na{\nabla} 
\def\d{\delta}
\def\G{{\Gamma}}
\def\L{{\Lambda}}
\def\l{{\lambda}}
\def\a{{\alpha}}
\def\de{\partial}
\newcommand{\R}{\mathbb{R}}
\newcommand{\N}{\mathbb{N}}
\providecommand{\norm}[1]{\lVert#1\rVert}
\newcommand{\rad}{{\text{\upshape rad}}}
\newcommand{\even}{{\text{\upshape even}}}
\def\sideremark#1{\ifvmode\leavevmode\fi\vadjust{\vbox to0pt{\vss% the remark
 \hbox to 0pt{\hskip\hsize\hskip1em%                          will appear only
 \vbox{\hsize2.1cm\tiny\raggedright\pretolerance10000%          on the side
  \noindent #1\hfill}\hss}\vbox to15pt{\vfil}\vss}}}%
\begin{document}
\title[Hardy-Sobolev equation]{Bifurcation analysis of the Hardy-Sobolev equation}

\thanks{D. Bonheure \& J.B. Casteras were supported by MIS F.4508.14 (FNRS) and PDR T.1110.14F (FNRS); J. B. Casteras is supported by FCT - Funda\c c\~ao para a Ci\^encia e a Tecnologia, under the project: UIDB/04561/2020
J.B. Casteras would like to thank the Belgian Fonds de la Recherche Scientifique -- FNRS;
F. Gladiali is supported by FABBR-2017 and partially supported by Prin-2015KB9WPT and Gruppo Nazionale per l'Analisi Matematica, la Probabilit\`a e le loro Applicazioni (GNAMPA) of the Istituto Nazionale di Alta Matematica (INdAM)} 
\author{Denis Bonheure} 
\author{Jean-Baptiste Casteras}
\author{Francesca Gladiali}

\address{Denis Bonheure, 
\newline \indent D\'epartement de Math\'ematiques, Universit\'e Libre de Bruxelles,
\newline \indent CP 214, Boulevard du triomphe, B-1050 Bruxelles, Belgium.}
\email{Denis.Bonheure@ulb.be}

\address{Jean-Baptiste Casteras
\newline \indent  CMAFCIO, Faculdade de Ci\^encias da Universidade de Lisboa, \indent 
\newline \indent Edificio C6, Piso 1, Campo Grande 1749-016 Lisboa, Portugal .\indent }
\email{jeanbaptiste.casteras@gmail.com}

\address{Francesca Gladiali
\newline \indent Dipartimento di Chimica e farmacia, Universit\`a di Sassari\indent 
\newline \indent Via Piandanna 4, 07100 Sassari, Italy.\indent }
\email{fgladiali@uniss.it}

\begin{abstract} In this paper, we prove existence of multiple non-radial solutions to the Hardy-Sobolev equation 
\begin{equation*}%\label{problem-intro}
\begin{cases}
-\Delta u-\displaystyle\frac \gamma{|x|^2}u=\displaystyle\frac{1}{|x|^s}|u|^{p_s-2}u & \text{ in } \R^N\setminus\{0\},\\
u\geq 0, & 
\end{cases}\end{equation*}
where $N\geq 3$, $s\in[0,2)$, $p_s=\frac{2(N-s)}{N-2}$ and $\gamma\in (-\infty,\frac{(N-2)^2} 4)$. We extend results of E.N. Dancer, F.~Gladiali, M.~Grossi, Proc. Roy. Soc. Edinburgh Sect. A 147 (2017) where only the case $s=0$ is considered. Moreover, thanks to monotonicity properties of the solutions, we separate two branches of non-radial solutions. 
\end{abstract}

\maketitle

{\em Keywords: } Hardy-Sobolev inequality; positive solutions; Morse index; symmetry and monotonicity of solutions.

{\em AMS Subject Classifications:}  35A01, 35B06, 35B09, 35B32, 35J91

\tableofcontents

\section{Introduction}
Lots of works have been devoted to the study of the best constant $C_{HS}$ in the Hardy-Sobolev inequality
\begin{equation}\label{HS-ineq}
\int_{\Omega} |\nabla u|^2\, dx - \gamma\int_{\Omega} \frac{u^2}{|x|^2}\, dx\ge C_{HS} \left(\int_\Omega \dfrac{|u|^{p_s}}{|x|^s} \, dx\right)^{\frac{2}{p_s}},\ u\in C_0^\infty (\Omega).
\end{equation}
This inequality holds true for any regular domain $\Omega\subseteq \R^N$ in dimension $N\ge 3$ and for 
\begin{equation}\label{eq:assum-param-s-gamma}
s\in[0,2) \text{ and }\gamma\in (-\infty,\frac{(N-2)^2} 4)
\end{equation}
with
\begin{equation}\label{eq:def-p_s}
p_s=\frac{2(N-s)}{N-2}.
\end{equation}
We refer to the survey paper \cite{GhRo4} as an entry to the extensive related literature.  

It is well-known that if $\Omega$ is a bounded domain having $0$ in its interior, then $C_{HS}$ is never achieved and, as a consequence, there is no energy minimizing solutions to the associated Euler-Lagrange equation completed by Dirichlet boundary condition. The situation drastically changes if $0$ is on the boundary of $\Omega$ as shown by Ghoussoub and Robert \cite{GR-interior,GhRo,GhRo3}. 

When $\Omega=\R^N$, the best constant $C_{HS}$ is achieved if and only if $\{s >0\}$ or $\{s =0 \text{ and } \gamma \ge 0\}$, see \cite{GhRo4}.
The Hardy-Sobolev inequality \eqref{HS-ineq} is a family of interpolation inequalities between the limit cases $s=2$ which yields Hardy inequality
$$\frac{(N-2)^2}{4}\int_{\R^N} \frac{u^2}{|x|^2}\, dx\leq \int_{\R^N} |\nabla u|^2 \, dx,\ u\in C_0^\infty (\R^N),$$
and Sobolev inequality recovered when $s=0$.
As noticed e.g. in~\cite{DELT09}, it is also equivalent to the celebrated Caffarelli-Kohn-Nirenberg inequality \cite{Il,GMGT,CKN} 
\begin{equation}\label{ineq:CKN}
\left(\int_{\R^N} |x|^{-bp} |u|^p \, dx \right)^{\frac{2}{p}}\leq C \int_{\R^N}|x|^{-2a}|\nabla u|^2\, dx,\ u\in C_0^\infty (\R^N), 
\end{equation}
where
$$-\infty < a < \frac{N-2}{2},\ 0\leq b-a \leq 1, \text{ and } p=\frac{2N}{N-2+2(b-a)}.$$
Indeed, setting $w(x)=|x|^{-a}u(x)$, we have
$$\dfrac{\int_{\Omega} |x|^{-2a} |\nabla u|^2\, dx}{\left(\int_\Omega |x|^{-bp} |u|^p\, dx \right)^{\frac{2}{p}}}=\dfrac{\int_{\Omega} |\nabla w|^2\, dx -\gamma \int_{\Omega} {|x|^{-2}}{w^2}\, dx}{\left( \int_{\Omega}{|x|^{-s}}{|w|^{p_s}} \, dx\right)^{\frac{2}{p_s}}},$$
for $s=(b-a)p$ and $\gamma =a(N-2-a)$. 

Dolbeault, Esteban and Loss \cite{DEL-16} recently proved an optimal rigidity result for the Euler-Lagrange equation associated to \eqref{ineq:CKN}
\begin{equation}\label{eq:CKN}
-\mathrm{div}\left(|x|^{-2a}\nabla u\right)=|x|^{-bp} u^{p-1} \  \text{ in }\R^N\setminus\{0\}
\end{equation}
in the range $2<p<\frac{2N}{N-2}$. 
Namely, assuming the integrability condition 
\begin{equation}\label{eq:cond-integral}
\int_{\R^N}  |x|^{-bp} u^{p}\, dx<~\infty, 
\end{equation}
they showed this equation has a unique (therefore radial) nonnegative solution  whenever 
\begin{equation}\label{eq:cond-DEL}
0\le a<\frac{N-2}2 \ \text{ and } b>0 
\text{ or } a<0 \text{ and }
b\ge b_{FS}(a)\end{equation}
with
\[ b_{FS}(a)=\frac{N\left(\frac{N-2} 2-a\right)}{2\sqrt{\left(\frac{N-2} 2-a\right)^2+N-1 }} +a- \frac{N-2} 2.  \]
When $a<0$ and $b<b_{FS}(a)$, Felli and Schneider \cite{FS} have previously shown that the best constant in \eqref{ineq:CKN} is achieved by non radial functions only and, as a byproduct, \eqref{eq:CKN} has non-radial nonnegative solutions and uniqueness is broken.

We precisely address in this paper the question of existence of non-radial nonnegative solutions of the Hardy-Sobolev equation
\begin{equation}\label{problem-intro}
\begin{cases}
-\Delta u-\displaystyle\frac \gamma{|x|^2}u=\displaystyle\frac{1}{|x|^s}|u|^{p_s-2}u & \text{ in } \R^N\setminus\{0\},\\
u\geq 0, & 
\end{cases}\end{equation}
which is the Euler-Lagrange equation associated to \eqref{HS-ineq}. We restrict our attention to solutions in the Sobolev space
$$D^{1,2}\left(\R^N\right)=\{u\in L^{2^*}(\R^N):  |\na u| \in L^2(\R^N)\}.$$
Setting again $w(x)=|x|^{-a}u(x)$, we see that solutions of \eqref{eq:CKN} satisfying the integrability condition \eqref{eq:cond-integral} correspond to solutions of \eqref{problem-intro} that satisfy 
$$\int_{\R^N}  |x|^{-s} w^{p_s}\, dx<\infty.$$ 
Applying the rigidity result of \cite{DEL-16} proved for \eqref{eq:CKN} to \eqref{problem-intro} yields uniqueness for $s>0$ and 
$$\gamma \ge \gamma_{FS} := \frac{(N-2)^2}4\left(\frac{(N-s)^2-N^2}{(N-s)^2-(N-2)^2}\right),$$
whereas the symmetry breaking results of \cite{FS} gives non-radial solutions for $\gamma < \gamma_{FS}$.
The case $s=0$ has been treated by Terracini \cite{T}. She showed, among other things, that, when $\gamma \in [0, \frac{(N-2)^2}{4})$, the problem \eqref{problem-intro} has a unique (therefore radial) solution, up to rescaling,  whereas for some $\gamma<\gamma^*<0$, there are two solutions : one is radial and the other is not. Motivated by this result, Dancer, Gladiali and Grossi \cite{DGG} computed the Morse index of the radial solution for $\gamma<0$ and the kernel of the linearized operator at the degeneracy points $\gamma_j$, see \eqref{eq:gamma_j} below with $s=0$. This analysis yields existence of continua of non-radial solutions bifurcating from the radial one at the critical values $\gamma_j$.

Our goal in this paper is to consider the case $s\ge 0$. We will not only generalize and extend \cite{DGG}, but also go deeper into the bifurcation analysis. 
%Namely, we show that the bifurcation is global so that the Rabinowitz alternative holds ???? also for $s=0$????. 
Improving arguments from \cite{G}, we prove monotonicity properties of the solutions along two branches of non-radial solutions.  This allow us to separate them, see Section \ref{se:4} for more details. 

Our first crucial observation is a one-to-one correspondence between radial solutions to \eqref{problem-intro} and the following ODE
\begin{equation}
\label{one-to-onecorre}
  \begin{cases}
    -\left(r^{q_s-1}v'\right)'=  D_\gamma
    r^{q_s-1}
    v^{\frac{q_s+2}{q_s-2}} & \text{ for } r \in(0,\infty)\\
    v\geq 0, \quad 
    \int_0^{\infty}r^{q_s-1}\left(v'(r)\right)^2\ dr <\infty
  \end{cases}
\end{equation}
where $q_s:=\frac {2(N-s)}{2-s}$ and $D_\gamma=\frac {4(N-2)^2}{(2-s)^2((N-2)^2-4\gamma)}$. We refer to Lemma \ref{lem:equiv-rad} for a precise statement. As a direct consequence of this fact, we deduce that radial solutions to \eqref{problem-intro} are given by
  \begin{equation}\label{sol-rad}
    u_{\gamma,\lambda}(x):=\frac{ \l^{\frac{N-2}2\nu_\gamma} 
    |x|^{\frac{N-2}2(\nu_\gamma-1)}\big[(N-s)(N-2)\nu_\gamma^2\big]^{\frac{N-2}{2(2-s)} }}{\left(1+\l^{(2-s)\nu_\gamma}|x|^ {(2-s)\nu_\gamma}\right)^{\frac {N-2}{2-s}}},
  \end{equation}
  where $\l>0$ and 
\begin{equation}\label{nu-gamma}
\nu_{\gamma}:=\sqrt{1-\frac {4\gamma} {(N-2)^2}}.
\end{equation}
This correspondence also gives information on the linearization of equation \eqref{problem-intro} at $u_{\gamma,\lambda}$. 
We recall that the Morse index of $u_{\gamma,\lambda}$ is the maximal dimension of a subspace of $D^{1,2}(\R^N)$ where the quadratic form corresponding to the linearized operator
\begin{equation}
\label{introeq:linearized2}
L_{\gamma,\lambda} v :=-\Delta v - \frac \gamma{|x|^2}v-(p_s -1)\frac{|u_{\gamma,\lambda}|^{p_s-2}v}{|x|^s},\ v\in D^{1,2}(\R^N)
\end{equation}
is negative definite. 
%This amounts to count the number of negative eigenvalues (taking their multiplicity into account) of $L_{\gamma,\lambda}$. 
Observe that, as well known, the problem \eqref{problem-intro} is conformally invariant. This means $\lambda^{\frac {N-2}2}u(\lambda x)$ is also a solution for any $\lambda>0$ if $u$ is a solution. Then it is easy to check that $Z_{\gamma,\lambda}(x)=\frac {\partial u_{\gamma,\lambda}}{\partial \lambda}$ solves \eqref{introeq:linearized2} for every $\gamma$. Therefore,
$$Z_{\gamma,\lambda}(x)=\zeta_{\gamma,\lambda}(x)\left(1-   \lambda^{(2-s)\nu_\gamma}  |x|^{(2-s)\nu_\gamma}  \right),$$ where
$$\zeta_{\gamma,\l}(x)=
  \frac{ \lambda ^{\frac{N-2}2\left(\nu_\gamma-1\right)} |x|^{\frac{N-2}2\left(\nu_\gamma-1\right)}}{\left( 1+ \lambda ^{(2-s)\nu_\gamma} |x|^{(2-s)\nu_\gamma}\right)^{\frac {N-s}{2-s}}}$$
 belongs to the kernel of $L_{\gamma,\lambda}$ for every $\gamma$. For $\gamma>0$, the kernel is always one-dimensional whereas harmonic polynomials generate degeneracies when $\gamma$ takes one of the nonpositive values
\begin{equation}\label{eq:gamma_j}
\gamma_j=\frac{(N-2)^2}4-\frac{j(N-2+j)(N-2)^2}{(2-s)(2N-2-s)},\ j\in\mathbb {N}.
\end{equation}
For $s=0$, $\gamma_1=0$ otherwise all those $\gamma_j$'s are negative. 
The next theorem gives the precise statement. The set 
\begin{equation}\label{eq:harmonics}
\Upsilon_j:=\{Y_{j,i}\mid i=1,\dots,\frac{(N+2j-2)(N+j-3)!}{(N-2)!j!}\}
\end{equation}
denotes a basis of the space of all homogeneous harmonic polynomials of degree $j$ in $\R^N$ (see e.g. \cite{DX}), and we fix the notation
\begin{equation} \label{eq:def-Z-gamma-2}
Z_{j,i,\lambda }(x)= \zeta_{\gamma,\lambda}(x) \lambda ^{\frac{2-s}2 \nu_{\gamma}}|x|^{\frac{2-s}2 \nu_{\gamma}}Y_{j,i}(x).
\end{equation}
\begin{theorem}\label{introlemma-lin}
Assume $\gamma< \frac{(N-2)^2}4$. 
\begin{enumerate}
 \item If $\gamma\neq \gamma_j$, the kernel of $L_{\gamma,\lambda}$  is one-dimensional and it is spanned by the function
%{\taglia $Z_\gamma(x)=\zeta_\gamma(x)\left(1-|x|^{(2-s)\nu_\gamma}  \right)$.}
  $Z_{\gamma,\lambda}(x)$.%=\zeta_{\gamma,\lambda}(x)\left(1-   \lambda^{(2-s)\nu_\gamma}  |x|^{(2-s)\nu_\gamma}  \right)$.
 
 \item If $\gamma=\gamma_j$, the kernel has dimension $1+\frac{(N+2j-2)(N+j-3)!}{(N-2)!\,j!} $ and it is spanned by 
%{\taglia  $\{Z_{\gamma_j}, Z_{j,i} \mid i=1,\dots,\frac{(N+2j-2)(N+j-3)!}{(N-2)!j!}\}$, where \begin{equation} \label{eq:def-Z-gamma-2} Z_{j,i}(x)= \zeta_\gamma(x) |x|^{\frac{2-s}2 \nu_{\gamma}}Y_{j,i}(x), \end{equation}}
 $\{Z_{\gamma_j,\lambda}, Z_{j,i,\lambda } \mid i=1,\dots,\frac{(N+2j-2)(N+j-3)!}{(N-2)!j!}\}$.
 %, where
%\begin{equation} \label{eq:def-Z-gamma-2}
%Z_{j,i,\lambda }(x)= \zeta_{\gamma,\lambda}(x) \lambda ^{\frac{2-s}2 \nu_{\gamma}}|x|^{\frac{2-s}2 \nu_{\gamma}}Y_{j,i}(x),
%\end{equation}
%and $Y_{j,i}\in \Upsilon_j$.
\item the Morse index of $u_{\gamma,\lambda}$  is independent on $\lambda$ and is given by 
$$m(\gamma)
= \sum_{j\in I_\gamma\cap \N} \frac{(N+2j-2)(N+j-3)!}{(N-2)!\,j!},$$
where $I_\gamma:=\left[0,\frac{2-N}2+\frac1 2 \sqrt{(N-s)^2-\frac{4\gamma}{(N-2)^2}(2-s)(2N-2-s)}\right)$.
\end{enumerate}
\end{theorem}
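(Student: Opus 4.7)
My plan is to diagonalize $L_{\gamma,\lambda}$ via the spherical harmonic decomposition and then apply a suitable variant of the change of variables underlying Lemma \ref{lem:equiv-rad} to reduce each angular sector to a one-dimensional linearized Aubin--Talenti ODE in the effective dimension $q_s$, augmented by a Hardy potential whose strength encodes the angular index. Concretely, expand any $v\in D^{1,2}(\R^N)$ as $v(r\theta)=\sum_{k\geq 0}\sum_{i=1}^{d_k}\phi_{k,i}(r)Y_{k,i}(\theta)$ with $d_k:=\dim\Upsilon_k=\frac{(N+2k-2)(N+k-3)!}{(N-2)!\,k!}$ and $Y_{k,i}\in\Upsilon_k$ orthonormal. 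Since $u_{\gamma,\lambda}$ is radial and $-\Delta_{S^{N-1}}Y_{k,i}=k(k+N-2)Y_{k,i}$, the equation $L_{\gamma,\lambda}v=0$ decouples into
\begin{equation}\label{plan:radial}
-\phi_{k,i}''-\frac{N-1}{r}\phi_{k,i}'+\frac{k(k+N-2)-\gamma}{r^2}\phi_{k,i}=(p_s-1)\frac{u_{\gamma,\lambda}^{p_s-2}}{r^s}\phi_{k,i},
\end{equation}
together with the finite-energy condition inherited from $v\in D^{1,2}(\R^N)$. Setting $a:=\frac{N-2}{2}(1-\nu_\gamma)$ and $\alpha:=\frac{(2-s)\nu_\gamma}{2}$, the substitution $\phi_{k,i}(r)=r^{-a}\psi_{k,i}(\lambda^\alpha r^\alpha)$ from Lemma \ref{lem:equiv-rad} turns \eqref{plan:radial} into a one-dimensional linearized Aubin--Talenti ODE in dimension $q_s$, augmented by a Hardy potential $\mu_k/t^2$ with $\mu_k:=\frac{4k(k+N-2)}{(2-s)^2\nu_\gamma^2}$, while the finite-energy condition passes to the corresponding weighted $L^2$-condition in $t$.

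In this reduced framework the kernel is determined by Frobenius analysis combined with the explicit structure of the linearized Aubin--Talenti equation. The indicial roots at the origin are $l$ and $-(l+q_s-2)$, where $l\geq 0$ satisfies $l(l+q_s-2)=\mu_k$; only the first root is compatible with finite energy, and the resulting solution actually lies in the finite-energy space if and only if $\mu_k$ coincides with the eigenvalue of an angular sector in which the unperturbed Aubin--Talenti linearization is degenerate. Since the Aubin--Talenti kernel in dimension $q_s$ decomposes into the scaling mode ($l=0$) and $q_s$ translation modes ($l=1$), kernel elements of \eqref{plan:radial} exist if and only if $l(k,\gamma)=0$ (forcing $k=0$ and yielding $Z_{\gamma,\lambda}$) or $l(k,\gamma)=1$. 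The latter rewrites as $\nu_\gamma^2=\frac{4k(k+N-2)}{(2-s)(2N-2-s)}$, that is $\gamma=\gamma_k$, and pulling back the $d_k$ ``translation'' directions produces the $Z_{k,i,\lambda}$ of \eqref{eq:def-Z-gamma-2}. This proves (1) and (2). For (3), each sector contributes exactly $d_k$ negative directions precisely when $\mu_k<q_s-1$, i.e.~$l(k,\gamma)<1$, which via the identity $4k^*(k^*+N-2)=\nu_\gamma^2(2-s)(2N-2-s)$ characterizing the upper endpoint $k^*$ of $I_\gamma$ is equivalent to $k\in I_\gamma\cap\N$; summing $d_k$ over such $k$ yields $m(\gamma)$. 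Independence of $\lambda$ is immediate from the scaling covariance of the substitution.

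The main obstacle is the rigorous implementation of the reduction: one must check that the substitution is an isomorphism between the finite-energy function spaces, so that all genuine kernel elements (and only those) are captured, and then analyze the reduced linearized Aubin--Talenti ODE with Hardy potential carefully enough to enumerate its kernel for every $\mu_k\geq 0$. The selection between the two indicial roots at $t=0$ (and the parallel selection at $t=\infty$) forces the quantization of $\mu_k$, which is exactly what produces the discrete sequence $\gamma_k$ as well as the precise locations of the Morse index jumps.
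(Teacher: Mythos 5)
Your proposal follows the same strategy as the paper: decompose in spherical harmonics, apply the change of variables of Lemma~\ref{lem:equiv-rad} to reduce each angular sector to a weighted ODE in effective dimension $q_s$, and then determine for which values of $\mu_k=\frac{4k(k+N-2)}{(2-s)^2\nu_\gamma^2}$ the reduced equation admits a finite-energy solution. The decoupling, the transformation, the identification $l=0\Leftrightarrow Z_{\gamma,\lambda}$ and $l=1\Leftrightarrow\gamma=\gamma_k$, and the Morse index count by summing the spherical-harmonic multiplicities over those $k$ with $\mu_k<q_s-1$ all match what is done in Proposition~\ref{lemma-lin} and Proposition~\ref{prop:Morse-index}, and your algebra turning $\mu_k<q_s-1$ into $k\in I_\gamma$ checks out.

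The gap is in the quantization step. You assert that the reduced ODE with Hardy potential $\mu_k/t^2$ has a finite-energy solution exactly when $\mu_k$ matches an eigenvalue of an angular sector in which the unperturbed Aubin--Talenti linearization is degenerate, ``by Frobenius analysis combined with the explicit structure of the linearized Aubin--Talenti equation.'' But Frobenius analysis only controls the indicial exponents at $0$ and at $\infty$; it does not by itself exclude a matching between the admissible branch at $0$ and the decaying branch at $\infty$ for intermediate values of $\mu_k$. And the Aubin--Talenti kernel decomposition into the scaling mode and the translation modes is a statement about the full PDE in $\R^n$ for integer $n$; it does not directly decide the one-dimensional ODE for a continuous parameter $\mu_k$ that is generically not of the form $l(l+q_s-2)$ with $l$ an integer (note that $q_s$ is in general not an integer). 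What closes this gap in the paper is a variational input: the reduced radial problem~\eqref{eq:radial-transf} has $V(r)=(1+r^2)^{-(q_s-2)/2}$ as a least-energy solution, hence the linearized operator has Morse index one, hence the weighted eigenvalue problem~\eqref{2.17-bis} has \emph{at most one} negative eigenvalue; explicit computation (from \cite{Ambrosetti-Azorero-Peral}) then identifies this unique negative eigenvalue as $\mu^*=-(q_s-1)$ and $\mu^*=0$ as the zero mode. That uniqueness, not Frobenius, forces $\mu_k\in\{0,q_s-1\}$ for kernel elements and, for part~(3), ensures each angular sector contributes exactly its multiplicity of negative directions when $\mu_k<q_s-1$ and none otherwise. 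Your plan would be complete if you replaced the hand-waved Frobenius step with this Morse-index-one observation, or with an equivalent Sturm--Liouville or shooting argument that pins down the unique negative weighted eigenvalue.
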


%These values are related to the set 
%$$\Upsilon_j:=\{Y_{j,i}\mid i=1,\dots,\frac{(N+2j-2)(N+j-3)!}{(N-2)!j!}\}$$ 
%denotes a basis of the space of all homogeneous harmonic polynomials of degree $j$ in $\R^N$ (see \cite{DX}).

%$(1)$ of Theorem \ref{introlemma-lin} holds. 

% We also  set
%\begin{equation}\label{eq:gamma_j}
%\gamma_j=\frac{(N-2)^2}4-\frac{j(N-2+j)(N-2)^2}{(2-s)(2N-2-s)},\ j\in\mathbb {N}
%\end{equation}
%and
% $$\zeta_{\gamma,\l}(x)=
%  \frac{ \lambda ^{\frac{N-2}2\left(\nu_\gamma-1\right)} |x|^{\frac{N-2}2\left(\nu_\gamma-1\right)}}{\left( 1+ \lambda ^{(2-s)\nu_\gamma} |x|^{(2-s)\nu_\gamma}\right)^{\frac {N-s}{2-s}}}.$$

The case $s=0$ is covered by \cite[Lemma 1.2 \& Proposition 1.3]{DGG}. For $s\ne 0$, we improve the result of Robert \cite{Robert} who showed one-dimensionality of the kernel when $\gamma+s>0$. Assertion $(2)$ implies $u_{\gamma,\lambda}$ is nondegenerate when $\gamma\neq \gamma_j$ and in particular for $\gamma \in (\gamma_1 , \frac{(N-2)^2}{4})$. Nondegeneracy here means that the unique element in the kernel comes from the natural invariance of the problem. 
Assertion $(3)$ is the basis to deduce multiplicity results using bifurcation theory. To kill the conformal invariance of the problem, following \cite{MW} (and \cite{GGT2}), we will work in the functional space
\[X:= D^{1,2}_k (\R^N) \cap L^\infty(\R^N),\]
where $ D^{1,2}_k (\R^N)$ is the subset of functions in $ D^{1,2}(\R^N)$ which are invariant by the Kelvin transform, namely
\[D^{1,2}_k:= \ \{z\in D^{1,2}(\R^N)\ : \ z(x)= \frac 1{|x|^{N-2}}z\left(\frac x{|x|^2}\right) \ \text{ in }\R^N\setminus\{0\}\}.\]
%This invariance also eliminates the scaling invariance of \eqref{problem-intro}. 
Observe that $u_{\gamma ,\lambda}\in X$ if and only if $\lambda=1$. This is a general fact. Indeed, if we define $z_\lambda(\cdot)=\lambda^{(N-2)/2}z(\lambda\cdot)$ for any $z\in D^{1,2}(\R^N)$, then a direct computation shows there exists at most one $\lambda>0$ such that $z_\lambda$ is Kelvin invariant. This means that the Kelvin invariance kills the conformal invariance of the solution as it selects one particular solution in the conformal class. 

%Suppose there are two such $\lambda$'s, namely $\lambda_1$ and $\lambda_2$. Then
%$$z(\lambda_1 x) = \lambda_1^{-\frac{N-2}2} z_{\lambda_1}(x)= \frac {\lambda^{-\frac{N-2}2} }{|x|^{N-2}}z_{\lambda_1}\left(\frac x{|x|^2}\right)=\frac {1}{|x|^{N-2}}z\left(\frac x{{\lambda_1}|x|^2}\right)$$
%and 
%$$z(\lambda_1 x)=\lambda_2^{-\frac{N-2}2} z_{\lambda_2}(\frac{\lambda_1}{\lambda_2}x)= \frac {{\lambda_2}^{\frac{N-2}2}}{|\lambda_1x|^{N-2}}z_{\lambda_2}\left(\frac{{\lambda_2}x}{\lambda_1|x|^2}\right)=
%\frac {\left(\frac{\lambda_2}{\lambda_1}\right)^{N-2}}{|x|^{N-2}}z\left(\frac x{{\lambda_1}|x|^2}\right).$$
%This shows $\lambda_1=\lambda_2$. 

\medbreak
%Several questions here: 
%
%1. Is it true that if we have a solution of the PDE which generates therefore a family by the conformal invariance, at least one of the family is Kelvin invariant ? 
%
%2. Now that we have somehow eliminated the scaling invariance, do we have a priori bounds ? 

One can also see the Kelvin invariance as a weighted symmetry on the sphere. Assume that $z\in D^{1,2}_k(\R^N)$. % is such that $z(x)= \frac 1{|x|^{N-2}}z\left(\frac x{|x|^2}\right)$. `
Set $P=(0,\ldots,0,1)$ and let $\sigma$ be the stereographic projection from $S^{N-1}\setminus\{P\}$ to $\R^N$. On the sphere $S^{N-1}\setminus\{P\}$, we define 
$Z(\zeta,\xi) = z(\frac{\zeta}{1-\xi})$.  Then $z(x)=Z\left(\frac{2x}{1+|x|^2},\frac{|x|^2-1}{|x|^2+1}\right)$ and the Kelvin invariance of $z$ means on the sphere that 
$$(1+\xi)^{\frac{N-2}2}Z(\zeta,\xi) = (1-\xi)^{\frac{N-2}2}Z(\zeta,-\xi).$$ 

\medbreak

The Kelvin invariance in turn has an impact on the nondegeneracy  in $X$ of $u_{\gamma ,1}$ that we simply denote hereafter by $u_\gamma$. Indeed, the function $Z_\gamma:=Z_{\gamma,1}$ is not invariant by Kelvin transform. This fact is at the origin of the following bifurcation result. In the statement, $O(k)$ denotes as usual the orthogonal group in $\R^k$. Before stating the first bifurcation result, observe that taking not only $D^{1,2}_k (\R^N) $ but $D^{1,2}_k (\R^N) \cap L^\infty(\R^N)$ as function space has another important consequence. Indeed, if $z\in X$, then $z(x)|x|^{N-2} = z\left(\frac x{|x|^2}\right)$ is bounded and this directly gives a polynomial decay at infinity. On the contrary, if $z\in D^{1,2}_k (\R^N) $ decays faster than $1/|x|^{N-2}$ at infinity, then it is bounded at the origin. 
\begin{theorem}\label{introteo:bif}
Fix $j\in \N$. 
\begin{enumerate}[(i)]
 \item There exists at least one continuum $\mathcal{C}_j^{N-1}$ of non-radial weak solutions to \eqref{problem-intro}, $O(N-1)$ invariant, which bifurcates from $(\gamma_j,u_{\gamma_j})$ in $(-\infty,0]\times X$.
\item If $j$ is even, there exist at least $\big[\frac N2\big]$ continua $\mathcal{C}_j^\ell$, $\ell\in [1,\ldots ,\big[\frac N2\big] ],$ of non-radial weak solutions to
\eqref{problem-intro} bifurcating from $(\gamma_j,u_{\gamma_j})$ in $(-\infty,0]\times X$. 
The first branch is $O(N-1)$ invariant, the others are $O(N-k)\times O(k)$ invariant for $k=2,\ldots,\big[\frac N2\big]$.

\item  the classical alternative holds: either (a) $\mathcal C_j^\ell$ is unbounded in $(-\infty,0)\times X$ or (b) there exists $ \gamma_h$ with $h\neq j$ such that  $( \gamma_h, u_{\gamma_h})\in\mathcal C_j^\ell$.
\end{enumerate}
\end{theorem}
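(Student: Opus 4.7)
The plan is to perform a global equivariant bifurcation analysis in the Kelvin--invariant space $X = D^{1,2}_k(\R^N) \cap L^\infty(\R^N)$, following the strategy of \cite{MW,GGT2}. Working in $X$ selects a unique representative $u_\gamma := u_{\gamma,1}$ of the radial conformal orbit and kills the kernel direction $Z_{\gamma,1}$ coming from the dilation invariance, since $Z_{\gamma,1}\notin X$. Consequently, by Theorem \ref{introlemma-lin}, the radial solution $u_\gamma$ is non-degenerate in $X$ whenever $\gamma\neq \gamma_j$, and at $\gamma=\gamma_j$ the kernel of $L_{\gamma_j,1}|_X$ is spanned by the finite family $\{Z_{j,i,1}\}_i$, indexed by a basis of degree--$j$ homogeneous harmonic polynomials.

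I would first rewrite \eqref{problem-intro} as a fixed--point equation $u = T(\gamma,u)$ on $X$, where $T$ inverts $-\Delta - \gamma/|x|^2$ (well posed on $D^{1,2}(\R^N)$ by the Hardy inequality for $\gamma < (N-2)^2/4$) applied to $|x|^{-s}|u|^{p_s-2}u$. The main analytic step is that $T$ is compact from $\R\times X$ into $X$, continuously depending on $\gamma$ on compact subintervals of $(-\infty,(N-2)^2/4)$. Compactness crucially exploits the $X$ structure: $L^\infty$--boundedness combined with Kelvin invariance forces the decay $|u(x)|\lesssim |x|^{2-N}$ at infinity and boundedness near $0$, which tames the weight $|x|^{-s}|u|^{p_s-2}$ and yields the required gain of regularity through standard weighted elliptic estimates.

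For any closed subgroup $G\subseteq O(N)$ the subspace $X^G$ of $G$--invariant elements is preserved by $T$ by equivariance, and Theorem \ref{introlemma-lin} identifies $\ker L_{\gamma_j,1}|_{X^G}$ with the span of those $Z_{j,i,1}$ whose $Y_{j,i}$ is $G$--invariant; its dimension therefore equals the dimension of $G$--invariant degree--$j$ spherical harmonics. For $G = O(N-1)$ fixing the $x_N$--axis this dimension is $1$ for every $j\ge 1$, with invariant $|x|^j C_j^{(N-2)/2}(x_N/|x|)$. For $G = O(N-k)\times O(k)$, $2\le k\le \big[\tfrac N2\big]$, $G$--invariants are polynomials in $|y|^2,|z|^2$ with $x=(y,z)\in\R^{N-k}\times\R^k$, and a direct Laplacian computation on this two--variable polynomial ring gives dimension $1$ when $j$ is even and $0$ when $j$ is odd. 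In each admissible case $\ker L_{\gamma_j,1}|_{X^G}$ is one--dimensional (hence odd), so the Leray--Schauder index of $I - T(\gamma,\cdot)$ at $u_\gamma$ changes across $\gamma_j$, and Rabinowitz's global bifurcation theorem in $X^G$ yields a continuum $\mathcal C_j^\ell$ satisfying the alternative (a) unbounded or (b) returning to some $(\gamma_h,u_{\gamma_h})$ with $h\neq j$.

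Solutions on $\mathcal C_j^\ell$ are non--radial because the bifurcation direction is a non--constant spherical harmonic. The containment $\mathcal C_j^\ell\subset(-\infty,0]\times X$ follows from $\gamma_j\le 0$ for all $j\ge 1$, visible from \eqref{eq:gamma_j}. Distinctness of the $\big[\tfrac N2\big]$ branches attached to a given even $j$ reduces to showing that any $u\in X^{G_\ell}\cap X^{G_{\ell'}}$ with $\ell\ne\ell'$ is radial; this is elementary since the join of $O(N-\ell)\times O(\ell)$ and $O(N-\ell')\times O(\ell')$ in $O(N)$ equals $O(N)$ for distinct $\ell,\ell'\in\{1,\dots,[N/2]\}$. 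The main technical obstacle is the compactness of $T$ in $X$ uniformly in $\gamma$ on compact intervals, together with the simple eigenvalue crossing at $\gamma_j$; the former is where the $L^\infty$ ingredient of $X$ and the polynomial decay enforced by Kelvin invariance really matter, while the latter reduces, via Theorem \ref{introlemma-lin}, to the harmonic--polynomial dimension count above.
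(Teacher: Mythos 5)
Your proposal is essentially the paper's approach: the same Kelvin--invariant space $X$, the same observation that $Z_{\gamma,1}\notin X$ while $Z_{j,i,1}\in X$, the same compact fixed--point operator $T$, the same spherical--harmonic dimension count in the symmetric subspaces, and Rabinowitz's global bifurcation theorem. Your group--join argument for distinctness of the $[\tfrac N2]$ branches at an even $j$ is in fact slightly more explicit than what the paper writes, which is a useful addition.

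There is, however, one genuine gap in the way you dispatch the alternative. The operator $T$ is only well defined (and compact) on $(-\infty,0]\times X$: for $\gamma>0$ the radial ground state is unbounded at the origin and no longer lies in $X$. So Rabinowitz's theorem over this half--line produces a \emph{three--way} alternative: the continuum is unbounded in $(-\infty,0)\times X$, or it meets $\{0\}\times X$, or it returns to another point $(\gamma_h,u_{\gamma_h})$ on the trivial curve. Your sentence ``the containment $\mathcal C_j^\ell\subset(-\infty,0]\times X$ follows from $\gamma_j\le 0$'' only locates the bifurcation \emph{point}; it does not prevent the branch from escaping through the boundary $\gamma=0$. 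To rule this out (and thus reduce to the stated two--way alternative) the paper uses two facts you do not invoke: first, an a priori lower bound on $\int|\nabla v|^2$ along the branch, showing the continuum never consists of trivial limits (the Hardy--Sobolev chain estimate in Proposition \ref{prop:unbounded-interval}); second, the symmetry/uniqueness statement of Proposition \ref{prop:existence} that for $\gamma\in(\gamma_1,(N-2)^2/4)$ the only solution is radial, so a non-radial continuum cannot pass $\gamma_1\le 0$. Without these ingredients the escape-through-$\{0\}\times X$ possibility is not excluded and part (iii) is not established. You should also phrase the compactness of $T$ as holding on $(-\infty,0]\times X$ rather than $\R\times X$, consistent with the definition of $X$.

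Finally, a small but worth noting imprecision: ``$\ker L_{\gamma_j,1}|_{X^G}$ is one--dimensional (hence odd), so the Leray--Schauder index changes'' compresses two steps. What is actually used is that the Morse index of $u_\gamma$ in $X^G$ jumps by exactly one when $\gamma$ crosses $\gamma_j$ (Corollary \ref{cor:G-morse-index-2}, together with the explicit eigenvalue formulas showing transversal crossing). The one--dimensional kernel is consistent with this, but one should state the Morse--index jump, not merely the kernel dimension, to justify the change of fixed--point index.
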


It is worth mentioning that Jin, Li and Xu \cite{JLX} had previously proved a bifurcation result in the case $s=0$. However their solutions do not belong to $D^{1,2}$ since they arise from the singular solution. Concerning the assertion (iii), one naturally would like to reinforce the alternative (a) by proving the branch is unbounded in the $\gamma$ direction. This requires a priori bounds for the solutions in $X$ which seems hard to get in a bifurcation setting as no energy bounds a priori hold. In the case $s=0$, Musso and Wei \cite{MW} have built an unbounded sequence of solutions so that an a priori bound along a branch, if any, should come from special properties of the solutions along that branch. The case $s\ne 0$ might be more rigid since a solution cannot concentrate outside the origin. One could therefore ask if the mere Kelvin invariance along the branch is enough to guarantee that it is unbounded in the $\gamma$ direction. We leave that as an open question.  In Section \ref{se:6}, we find solutions by minimization in symmetry classes for any given $\gamma$. This supports the conjecture that some branches should be unbounded in the $\gamma$ direction.

To get a better picture, one would like also to rule out the alternative $(b)$ in the assertion $(iii)$ of Theorem \ref{introteo:bif}. To do so, we borrow some ideas from \cite{G}. We introduce spherical coordinates, denoting by $\theta$ the azimuthal angle. We  refer to the beginning of Section \ref{se:4} for the full details. We next define $X^{N-1}$ as the subspace of $X$ given by the functions which are invariant by the action of the orthogonal group $O(N-1)$ in $\R^{N-1}$, which acts on the first $(N-1)$-variables. We then introduce the cones
\begin{equation}\label{def:K1}
\mathcal{K}_{+}^1=\left\{v\in X^{N-1}, 
v(r,\theta) \hbox{ is non-decreasing in } \theta \hbox{ for } (r, \theta)\in (0,\infty)\times[0,\pi] \right\} 
\end{equation}
and
\begin{equation}\label{def:K2}
\mathcal{K}_{+}^2=\left\{\begin{array}{l} v\in X^{N-1},  v(x',x_N)=v(x',-x_N) \hbox{ for }(x',x_N)\in \R^N\setminus\{0\},\\
v(r,\theta) \hbox{ is non-decreasing in } \theta \hbox{ for } (r, \theta)\in (0,\infty)\times[0,\frac \pi 2]
\end{array}\right\}.
\end{equation}
We define similarly $\mathcal{K}_{-}^1$ (resp. $\mathcal{K}_{-}^2$) assuming $v(r,\theta)$ is non-increasing in $(0,\infty)\times[0,\pi]$ (resp. in  $(0,\infty)\times[0,\frac \pi 2]$).
It is natural to search for solutions in these cones since the operator
\begin{equation}\label{def:T}
T(v):=\left( -\Delta-\frac \gamma{|x|^2}I\right)^{-1}\left( 
 \frac{|v|^{p_s-2}v}{|x|^s}\right)\end{equation}
maps $\mathcal K_{\pm}^i$ into itself, for $i=1,2$. Moreover, the change of the Morse index of $u_\gamma$
 in these cones at $\gamma_j$ is odd. This allows us to prove that the Rabinowitz alternative holds.
\begin{theorem}
\label{introteo-bif-Kpm} 
Let $j=1$ or $j=2$. The point $(\gamma_j,
u_{\gamma_j} )$ is a non-radial bifurcation point in $(-\infty,0]\times\mathcal K^j_{\pm}$ and the continuum $\mathcal{C}_j^{\pm}$ that branches out
of $(\gamma_j,u_{\gamma_j})$ is unbounded in $(-\infty,0]\times \mathcal K^j_{\pm}$. Moreover, we have $\mathcal C_1^{\pm}\cap\mathcal C_2^{\pm}=\emptyset$.
\end{theorem}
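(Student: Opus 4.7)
The plan is to apply a Rabinowitz-type global bifurcation theorem in the invariant cones $\mathcal{K}^j_\pm$. First I would reformulate the problem as a fixed point equation $v=T_\gamma(v)$ in $X$, where $T_\gamma$ is the $\gamma$-dependent operator defined in \eqref{def:T}. The property that $T_\gamma(\mathcal{K}^j_\pm)\subseteq\mathcal{K}^j_\pm$ (announced before the theorem, via the Hardy-inequality-type maximum principle plus a symmetrization/moving plane argument) is essential, since it allows us to restrict the bifurcation analysis to these cones. Compactness of $T_\gamma$ on $X$ comes from standard elliptic regularity together with the $L^\infty$-control built into the space $X$ through Kelvin invariance.

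Next I would prove that at $\gamma=\gamma_j$ (for $j=1$ or $j=2$) the Leray--Schauder index of the trivial radial branch, restricted to the cone $\mathcal{K}^j_\pm$, jumps by an odd integer. By Theorem \ref{introlemma-lin}, the kernel of $L_{\gamma_j,1}$ in $X$ decomposes along the harmonics $Y_{j,i}$. In the cone $\mathcal{K}^j_\pm$ we work with $O(N-1)$-invariant functions, so only zonal harmonics survive and the kernel contribution reduces essentially to $Z_{j,1,1}\propto P_j(\cos\theta)\cdot$(radial part), with $P_j$ the Legendre polynomial of degree $j$. For $j=1$, $P_1(\cos\theta)=\cos\theta$ is strictly monotone on $[0,\pi]$, and $\mp Z_{1,1,1}$ lies in $\mathcal{K}^1_\pm$; for $j=2$, $P_2(\cos\theta)$ is even under $\theta\leftrightarrow\pi-\theta$ and strictly monotone on $[0,\pi/2]$, so $\mp Z_{2,1,1}$ lies in $\mathcal{K}^2_\pm$. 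In both cases the kernel of $L_{\gamma_j,1}$ in the tangent cone is one-dimensional, and a Dancer/Rabinowitz global bifurcation theorem for positive operators yields a continuum $\mathcal{C}_j^\pm$ branching out of $(\gamma_j,u_{\gamma_j})$ in $(-\infty,0]\times\mathcal{K}^j_\pm$.

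To promote the Rabinowitz alternative to unboundedness, I must rule out the possibility that $\mathcal{C}_j^\pm$ returns to some other trivial point $(\gamma_h,u_{\gamma_h})$ with $h\neq j$. A return would require $(\gamma_h,u_{\gamma_h})$ itself to be a bifurcation point inside $\mathcal{K}^j_\pm$, hence the kernel of $L_{\gamma_h,1}$ must contain a direction compatible with the cone. Repeating the zonal-harmonic reduction above, the only candidate is the Legendre polynomial $P_h(\cos\theta)$. For $j=1$, $P_h$ must be monotone on $[0,\pi]$, which forces $h=1$ (higher-order Legendre polynomials oscillate). For $j=2$, $P_h$ must be even in $\cos\theta$ and monotone on $[0,\pi/2]$, forcing $h=2$. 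Hence no other $\gamma_h$ can be reached, and each continuum $\mathcal{C}_j^\pm$ is unbounded in $(-\infty,0]\times\mathcal{K}^j_\pm$. For the disjointness statement, observe that a function in $\mathcal{K}_+^1\cap\mathcal{K}_+^2$ is simultaneously non-decreasing in $\theta$ on $[0,\pi]$ and invariant under $\theta\leftrightarrow\pi-\theta$, which forces it to be constant in $\theta$, i.e.\ radial; the analogous conclusion holds for the other sign combinations. Since the continua $\mathcal{C}_j^\pm$ consist of non-radial solutions (except at the bifurcation points, which are $\gamma_1\neq\gamma_2$), they cannot meet.

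The main technical obstacle I expect is justifying the odd change of the fixed-point index in the cone, rather than in the ambient space $X$. Standard Morse-index formulas need to be replaced by an index-in-cone computation à la Dancer, which requires that the linearization splits along the tangent cone and that the eigenvalue crossing zero be simple within the cone. The zonal-harmonic reduction above makes this possible, but the spectral analysis has to handle carefully both the radial part (which does not degenerate at $\gamma_j$) and the angular part (where a simple eigenvalue crosses through zero), and verify that higher-order angular modes do not contribute to the bifurcation at $\gamma_1$ or $\gamma_2$ because they violate the cone's monotonicity requirement.
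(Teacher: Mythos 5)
Your overall strategy mirrors the paper's: reformulate as a fixed-point problem for the operator $T$, use the cone invariance $T(\gamma,\mathcal K^j_\pm)\subseteq\mathcal K^j_\pm$ (Lemma \ref{lem-invariance-K}), show an odd change of the fixed-point index \emph{in the cone} at $\gamma_j$ (Lemma \ref{lemma:calcoloIndexCono}, via Dancer's ``property $\alpha$'' and the degree-in-cone theory of \cite{D83}), and then run a Rabinowitz-type global bifurcation argument (Theorem \ref{teo-bif-a}). Your identification of the unique zonal harmonic $Z_{j,1}\propto\psi(r)P_j^{(\frac{N-3}{2},\frac{N-3}{2})}(\cos\theta)$ surviving in $X^{N-1}$ (or $X^{N-1}_\even$) and the observation that it is cone-compatible only for $j=1$ (resp.\ $j=2$) is precisely the content underlying Lemma \ref{lemma:calcoloIndexCono} and Corollary \ref{cor:G-morse-index-2}. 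The disjointness argument via $\mathcal K^1_\pm\cap\mathcal K^2_\pm=X_\rad$ is the same as Step 3 of the paper.

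Two points need attention. First, you rule out only the return alternative $(\gamma_h,u_{\gamma_h})\in\mathcal C_j^\pm$ with $h\ne j$; but the Rabinowitz alternative in $(-\infty,0]\times\mathcal K^j_\pm$ has a third branch: the continuum could ``exit'' through $\{0\}\times\mathcal K^j_\pm$. The paper excludes this via Proposition \ref{prop:unbounded-interval} (the continua consist of nontrivial solutions, and Proposition \ref{prop:existence} forbids non-radial solutions for $\gamma\in(\gamma_1,\tfrac{(N-2)^2}{4})$, so the branch cannot overpass $\gamma_1\le 0$). This step is absent from your proposal. Second, your mechanism for excluding a return to $(\gamma_h,u_{\gamma_h})$ is a tangential blow-up argument near the trivial branch (``the bifurcation direction must lie in the cone''), which indeed works but requires a genuine compactness/linearization argument showing the normalized difference $(v_n-u_{\gamma_n})/\|v_n-u_{\gamma_n}\|_X$ converges in $X$ to a (cone-compatible) kernel element; you do not supply this. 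The paper instead uses a cleaner, purely topological parity argument borrowed from \cite{G}: a \emph{bounded} continuum would have to carry an even number of $\gamma_k$'s at which $\mathrm{ind}_{\mathcal K^j_\pm}(T(\gamma,\cdot),U_\gamma)$ changes, while Lemma \ref{lemma:calcoloIndexCono} shows this index changes \emph{only} at $\gamma_j$. That sidesteps the tangential analysis entirely and is the substantive difference between the two routes; you anticipate the underlying difficulty correctly in your last paragraph, but should be aware of the degree-theoretic shortcut.
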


 Other invariant subspaces can be considered. The symmetries of these subspaces are induced by those of the spherical harmonics $Y_{j,i}$ introduced in \eqref{eq:harmonics}. Let us describe another choice: let $X_j$ be the subset of $X$ invariant by rotations of angle $\frac{2\pi}{j}$ in the plan $(x_1,x_2)$ and by the reflexion with respect to the hyperplane $x_2=0$. Using polar coordinates $(\rho, \psi)$ in the plan $(x_1,x_2)$, we define the cones, for $j\in\mathbb N$,
 \begin{equation}\label{K-j}
\tilde{\mathcal K}^j_{+}\ : \ =
\left\{\begin{array}{l}v\in X_j, 
v(\rho,\psi, x_3,\dots,x_N) \hbox{ is non-decreasing in }\\ \psi \hbox{ for } (\rho, \psi)\in (0,\infty)\times[0,2\pi] , (x_3,\dots,x_N)\in\R^{N-2}
\end{array}\right\}.
\end{equation}
The cones $\tilde{\mathcal K}^j_{-}$ are defined similarly.

\begin{theorem}
\label{introteo-bif-a} 
Let $j\in\N$. The points $(\gamma_j,
u_{\gamma_j} )$ are non-radial bifurcation points in $(-\infty,0]\times \tilde{\mathcal K}^j_{\pm}$. The continuum $\tilde {\mathcal{C}}_j^{\pm}$ that branches out
of $(\gamma_j,u_{\gamma_j})$ is unbounded in $(-\infty,0]\times \tilde{\mathcal K}^j_{\pm}$ and if $j\neq k$, we have $\tilde{\mathcal C}_j ^{\pm}\cap \tilde{\mathcal C}_k^{\pm} \subset \mathcal{X}_\psi$, where $\mathcal X_\psi$ is the subspace of functions in $X$ that do not depend on the angle $\psi$.
\end{theorem}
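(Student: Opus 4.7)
My plan is to follow the strategy of the proof of Theorem \ref{introteo-bif-Kpm}, adapting it to each of the cones $\tilde{\mathcal K}^j_\pm$, and then to deduce the disjointness statement from the combined rotational symmetries that any function lying in two distinct cones would inherit.

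I would first check that the operator $T$ in \eqref{def:T} maps each $\tilde{\mathcal K}^j_\pm$ into itself. The weight $|x|^{-s}$ and the Hardy potential $\gamma|x|^{-2}$ are both invariant under every orthogonal transformation fixing the origin, so $T$ commutes with rotations of angle $2\pi/j$ in the $(x_1,x_2)$-plane and with the reflection across $\{x_2=0\}$, thereby preserving the symmetry class $X_j$. Preservation of the $\psi$-monotonicity should be obtained by a moving plane argument applied to hyperplanes containing both the $x_N$-axis and the origin, following the derivation of the analogous fact for $\mathcal K^j_\pm$ in Section \ref{se:4}; the Hardy singularity is harmless precisely because every such hyperplane passes through it.

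Next I would study the local bifurcation. By Theorem \ref{introlemma-lin}(2), the non-conformal part of $\ker L_{\gamma_j,1}$ is spanned by the functions $Z_{j,i,1}$ built from the spherical harmonics of degree $j$. Among these, the direction associated with the harmonic polynomial $\mathrm{Re}(x_1+ix_2)^j=\rho^j\cos(j\psi)$ is $2\pi/j$-periodic in $\psi$, even under $x_2\mapsto -x_2$, and strictly monotone in $\psi$ on a fundamental domain, so the corresponding kernel direction lies in $\overline{\tilde{\mathcal K}^j_+}$ but not in $-\overline{\tilde{\mathcal K}^j_+}$, while the conformal direction $Z_{\gamma_j}$ is radial. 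Consequently, the Leray--Schauder index of the linearization restricted to $X_j$ changes by an odd number across $\gamma_j$, and the global bifurcation theorem of Krasnoselskii--Rabinowitz in the cone yields a connected continuum $\tilde{\mathcal C}_j^\pm$ of non-radial solutions branching from $(\gamma_j,u_{\gamma_j})$ in $(-\infty,0]\times \tilde{\mathcal K}^j_\pm$.

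For the unboundedness, the Rabinowitz alternative gives that either $\tilde{\mathcal C}_j^\pm$ is unbounded in $(-\infty,0]\times \tilde{\mathcal K}^j_\pm$, or it meets another radial solution $(\gamma_k,u_{\gamma_k})$ with $k\ne j$. The latter case should be excluded by arguing that, near such a return point, the tangent direction to $\tilde{\mathcal C}_j^\pm$ must belong simultaneously to the non-radial part of $\ker L_{\gamma_k,1}$ and to $X_j$: a Fourier decomposition of the $Z_{k,i,1}$'s in the angle $\psi$ shows that only combinations with angular frequencies in $j\mathbb N$ survive, and the cone-monotonicity together with the evenness in $x_2$ eliminates all such combinations when $k\ne j$. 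For the disjointness, any $v\in \tilde{\mathcal C}_j^\pm\cap \tilde{\mathcal C}_k^\pm$ is invariant under the rotation group generated by the angles $2\pi/j$ and $2\pi/k$, hence under rotations of angle $2\pi/\mathrm{lcm}(j,k)$ in $(x_1,x_2)$; combining this refined periodicity with the $\psi$-monotonicity on a fundamental domain forces $v$ to be independent of $\psi$, i.e.\ $v\in \mathcal X_\psi$. The main obstacle is the first step: one must verify that the weighted resolvent preserves monotonicity along tangential directions on spheres through the origin, which is possible precisely because every reflection hyperplane used in the argument contains the Hardy singularity.
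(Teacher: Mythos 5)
Your outline — cone invariance, an odd change of fixed-point index in the cone at $\gamma_j$, global bifurcation \`a la Rabinowitz/Dancer, and disjointness via the combined rotational symmetry forcing $\psi$-independence — is the right skeleton, and the disjointness step does match the paper (it is exactly the observation recorded in \eqref{quelloCheGuadagnoConConi}). Two remarks on the other steps, one minor and one substantive.

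\textbf{Cone invariance.} You describe this as a moving-plane argument with reflection hyperplanes through the origin. That is not what the paper does: Lemma~\ref{lemma-operator-in-cone-2} (following the proof of Lemma~\ref{lem-invariance-K}) differentiates the solution with respect to the angle $\psi$, writes the linear equation satisfied by $z_\psi$, and then runs a weak maximum-principle argument using a radial cut-off $\eta_\e$ to deal with the singularity at the origin and the decay at infinity. A moving-plane version might be made to work, but it is a genuinely different route (and not the ``derivation in Section~\ref{se:4}'' that you cite), and one would still have to deal with the Hardy singularity and the lack of a priori decay, which is precisely what the cut-off handles.

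\textbf{Unboundedness.} Here there is a real gap. You try to rule out alternative $(c)$ of Remark~\ref{rem-alternative} (a return to $(\gamma_k,u_{\gamma_k})$ with $k\ne j$) by extracting a normalized limiting direction in $\ker L_{\gamma_k}\cap X_j$ and arguing that cone-monotonicity plus evenness in $x_2$ excludes it. This fails already for $1\le k<j$: for those values the only degree-$k$ spherical harmonics invariant under $\mathcal G_j$ are $\psi$-independent, so the limiting direction lies in $\mathcal X_\psi$, which is contained in $\overline{\tilde{\mathcal K}^j_\pm}$ (such functions are trivially non-decreasing in $\psi$). No contradiction arises, so the return is not excluded. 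For $k>j$ the elimination is also delicate: the $\mathcal G_j$-invariant kernel has dimension larger than one, and one would need to show that no nontrivial linear combination of the degree-$k$, frequency-in-$j\N$ harmonics has $\partial_\psi$ of a fixed sign uniformly in $\theta$ and the remaining angles; this is not an automatic consequence of the Fourier decomposition in \eqref{N>3}. The paper does not attempt such an exclusion at all. Instead it proves unboundedness by the same parity/degree-count mechanism used in Step~2 of the proof of Theorem~\ref{teo-bif-a} (and ultimately in \cite[Proposition 2.3]{G}): along a \emph{bounded} branch, the number of degeneracy points at which $\mathrm{ind}_{\tilde{\mathcal K}^j_\pm}(T(\gamma,\cdot),U_\gamma)$ jumps must be even, and Lemma~\ref{lemma:calcoloIndexCono2} controls these jumps; this yields unboundedness without ever claiming the branch avoids the other radial degeneracy points $(\gamma_k,u_{\gamma_k})$. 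In other words, the paper's statement is weaker than what you are trying to establish, and the extra strength you aim for is exactly where your argument breaks down.
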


The plan of the paper is the following : in Section \ref{sectionradial}, we prove the one-to-one correspondance between \eqref{problem-intro} and \eqref{one-to-onecorre} as well as Theorem \ref{introlemma-lin}. Section \ref{se:3} is devoted to the proof of Theorem \ref{introteo:bif}. Theorem \ref{introteo-bif-Kpm} is proved in Sections \ref{se:4} while Theorem \ref{introteo-bif-a} is proved in Section \ref{se:5}. 
Section \ref{se:6} is devoted to the study of existence and symmetry properties of minimisers of the standard functional associated to \eqref{problem-intro}. Finally, in the Appendix \ref{se:7}, we collect several basic results, useful throughout the paper, on linear equations involving the operator $-\Delta - \frac{\gamma}{|x|^2}$ such as a Maximum Principle, a Comparison Principle and some decay estimates.

\subsection{Notations}
 For convenience, we recall here all the notations 
\begin{itemize}
\item $N$ is the dimension and we always assume $N\ge 3$;
\item the ranges for the parameters $\gamma$ and $s$ are given in \eqref{eq:assum-param-s-gamma};
\item $p_s=\frac{2(N-s)}{N-2}$.
\item $\nu_{\gamma}:=\sqrt{1-\frac {4\gamma} {(N-2)^2}}$.
\item $\gamma_j=\frac{(N-2)^2}4-\frac{j(N-2+j)(N-2)^2}{(2-s)(2N-2-s)},\ j\in\mathbb {N}$.
\item  $C_\gamma
=(N-s)(N-2)\nu_\gamma^2$.
\item $a_{\gamma}:=\frac {N-2}2(1-\nu_\gamma)$ and $b_{\gamma,s}:=\frac 2{(2-s)\nu_{\gamma}}$.
\item $X:= D^{1,2}_k (\R^N) \cap L^\infty(\R^N),$ where 
\[D^{1,2}_k:= \ \{z\in D^{1,2}(\R^N)\ : \ z(x)= \frac 1{|x|^{N-2}}z\left(\frac x{|x|^2}\right) \ \text{ in }\R^N\setminus\{0\}\}.\]
\item $O(k)$ is the orthogonal group in $\R^k$.
\end{itemize}

\section{Radial solutions}\label{sectionradial}
In this section, we consider radial solutions to \eqref{problem-intro}. We give a simple proof of their characterization in terms of a nonsingular ODE which allows us to identify the degeneracy and to compute the Morse index. To simplify the expression of the set of solutions, we consider hereafter the problem
\begin{equation}\label{problem}
\begin{cases}
-\Delta u-\frac \gamma{|x|^2}u= C_\gamma
\frac{|u|^{p_s-2}u}{|x|^s} & \text{ in } \R^N\setminus\{0\}\\
u\geq 0 & \text{ in }\R^N\setminus\{0\}\\
u\in D^{1,2}(\R^N)
\end{cases}\end{equation}
with 
$C_{\gamma}=(N-s)(N-2)\nu_\gamma^2$
 where $\nu_\gamma$ as defined in \eqref{nu-gamma}. 
Let us point out that if $U$ is a solution to \eqref{problem}, the function $u(x)=c^{-2}U(cx)$ solves \eqref{problem-intro} provided that 
$c^{2(p_s-1)}C_{\gamma}=1$.
In the sequel, we will prove all the results for solutions to \eqref{problem}.

\

Since we are working in the space $D^{1,2}(\R^N)$, by solution of \eqref{problem} we mean a function $u\in D^{1,2}(\R^N)$ that satisfies 
\[\int_{\R^N}\nabla u\nabla \psi \ dx-\gamma \int_{\R^N}\frac {u \psi}{|x|^2} \ dx= C_\gamma \int_{\R^N}\frac { |u|^{p_s-2}u\psi}{|x|^s} \ dx\]
for any $\psi \in C^{\infty}_0(\R^N)$ and so, by density, for every $\psi \in D^{1,2}(\R^N)$.

We will show that \eqref{problem} restricted to radial solutions yields the one-dimensional problem
\begin{equation}\label{eq:radial-transf}
  \begin{cases}
    -(r^{q_s-1}v')'= q_s(q_s-2)r^{q_s-1}
    v^{\frac{q_s+2}{q_s-2}} & \text{ for } r \ \in (0,\infty)\\
    v\geq 0, \  \ \    \int_0^{\infty}r^{q_s-1}\left(v'(r)\right)^2\ dr <\infty
  \end{cases}
\end{equation}
where $q_s:=\frac {2(N-s)}{2-s}>2$ plays the role of a fractional dimension when $s>0$. Notice that when $s=0$, we recover the classical problem
$$\begin{cases}-\Delta v = N(N-2) v^{\frac{N+2}{N-2}},\\ v\geq 0,\ v\in D^{1,2}_{\rad}(\R^N), \end{cases}$$
where $D^{1,2}_\rad (\R^N)$ is the subspace of radial functions in $D^{1,2}(\R^N)$.
To prove the equivalence between \eqref{problem} and \eqref{eq:radial-transf}, we now introduce some notations. For any $\gamma\in (-\infty, \frac {(N-2)^2}4)$, we define
\begin{equation}\label{a-gamma}
a_{\gamma}:=\frac {N-2}2(1-\nu_\gamma)\  \  \  \ \text{and} \  \  \ \ 
b_{\gamma,s}:=\frac 2{(2-s)\nu_{\gamma}}
\end{equation}
and for any $k>2$, denote by $\mathcal H_k:=D^{1,2}\left((0,\infty),r^{k-1} dr\right)$ the space of measurable functions $w\ : \ (0,\infty)\rightarrow \R$ such that
\begin{equation}\label{eq:norm-k}
  \norm{w}_k=\left(\int_0^\infty r^{k-1} (w')^2\ dr \right)^{\frac 12} <\infty .
\end{equation}
 Observe that $D^{1,2}_\rad (\R^N)=D^{1,2}\left((0,\infty),r^{N-1} dr\right)=\mathcal H_N$,  see \cite{DGG}.

\

Concerning problem \eqref{eq:radial-transf}, we say that a function $v\in \mathcal H_{q_s}$ is a weak solution if it satisfies
\[\int_0^{\infty}r^{q_s-1}v'\xi'\, dr= q_s (q_s -2) \int_0^{\infty}r^{q_s-1}  v^{\frac{q_s+2}{q_s-2}} \xi\, dr,\]
for any $\xi\in C^{\infty}_0[0,\infty)$ and, by density for any $\xi\in \mathcal H_{q_s}$. We can now state the equivalence between the two problems.

\begin{lemma}\label{lem:equiv-rad}
  Let $u\in D_{\rad}^{1,2}(\R^N)$ be a weak solution to \eqref{problem}.
Then the function
\begin{equation}\label{def-v}
v(r):=r^{a_{\gamma}b_{\gamma,s} }u\left( r^{b_{\gamma,s}}\right)
\end{equation}
belongs to $\mathcal H_{q_s}$
and satisfies weakly \eqref{eq:radial-transf}. 
Moreover it holds
\begin{equation}\label{norm-equivalence}
\begin{split}
\int_{\R^N}|\nabla u|^2-\frac {\gamma}{|x|^2}u^2 \, dx& =\omega_N\int_0^{\infty}r^{N-1}\left( (u'(r))^2-\frac {\gamma}{r^2}u(r)^2\right)\, dr \\
&=\omega_N\frac {(2-s)\nu_\gamma}2 \int_0^{\infty}r^{q_s-1}\left( v'(r)\right)^2 \, dr
\end{split}
\end{equation}
where $\omega_N$ is the measure of the unit sphere in $\R^{N}$.
\end{lemma}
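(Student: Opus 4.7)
The plan is to decompose the substitution $v(r) = r^{a_\gamma b_{\gamma,s}} u(r^{b_{\gamma,s}})$ into two successive transformations and to track the equation and the quadratic form at each step. First, I would set $w(R) := R^{a_\gamma} u(R)$, so that $u(R) = R^{-a_\gamma} w(R)$. Computing the radial Hardy--Laplace operator of this product, the coefficient of $R^{-a_\gamma - 2} w$ works out to be exactly $a_\gamma(N-2-a_\gamma) - \gamma$, which vanishes because $a_\gamma$ is a root of $a^2 - (N-2)a + \gamma = 0$. What survives is the radial Laplacian in ``fractional dimension'' $m := 2 + (N-2)\nu_\gamma = N - 2a_\gamma$ acting on $w$, while the right hand side of \eqref{problem} becomes $C_\gamma R^{(2-s)\nu_\gamma - 2} w^{p_s-1}$ after absorbing the factor $R^{-a_\gamma}$.

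Next, I would pass to $v(r) = w(r^{b_{\gamma,s}})$. A direct calculation of $w', w''$ shows that, after factoring out $r^{2-2 b_{\gamma,s}}/b_{\gamma,s}^2$, the drift coefficient becomes $1 + b_{\gamma,s}(m-2)$, which equals $q_s - 1$ by the definitions of $b_{\gamma,s}$, $\nu_\gamma$ and $q_s$. The same power $r^{2-2b_{\gamma,s}}$ appears on the right hand side after substituting $R = r^{b_{\gamma,s}}$, and a short algebraic check gives $b_{\gamma,s}^2 C_\gamma = \tfrac{4(N-s)(N-2)}{(2-s)^2} = q_s(q_s-2)$. Dividing through yields the strong form of \eqref{eq:radial-transf}, and applying the same two substitutions to radial test functions, together with density of $C_0^\infty(0,\infty)$ in $\mathcal H_{q_s}$ (cf.\ \cite{DGG}), gives the weak form.

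For the norm identity, expanding $(u')^2$ from $u = R^{-a_\gamma} w$ produces a cross term $-2 a_\gamma R^{N-2-2a_\gamma} w w'$, which I would rewrite as $-a_\gamma \tfrac{d}{dR}\bigl[R^{N-2-2a_\gamma} w^2\bigr] + a_\gamma(N-2-2a_\gamma) R^{N-3-2a_\gamma} w^2$. The $w^2$ coefficients then collapse to $a_\gamma^2 - \gamma + a_\gamma(N-2-2a_\gamma) = -(a_\gamma^2-(N-2)a_\gamma+\gamma) = 0$, giving the pointwise identity $R^{N-1}(u')^2 - \gamma R^{N-3}u^2 = R^{N-1-2a_\gamma}(w')^2 - a_\gamma \tfrac{d}{dR}\bigl[R^{N-2-2a_\gamma} w^2\bigr]$. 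After integrating on $(0,\infty)$, the boundary term $a_\gamma[R^{N-2} u^2]_0^\infty$ vanishes for radial $u \in C_c^\infty(\R^N \setminus \{0\})$; both sides of the resulting integrated identity are continuous in the $D^{1,2}$ norm (which controls the Hardy term because $\gamma < (N-2)^2/4$), so the identity extends by density to every radial $u \in D^{1,2}(\R^N)$. A final change of variable $R = r^{b_{\gamma,s}}$, using $N - 1 - 2 a_\gamma = m-1$ and $b_{\gamma,s}(m-2) = q_s - 2$, produces the factor $1/b_{\gamma,s} = (2-s)\nu_\gamma / 2$ in front of $\int_0^\infty r^{q_s-1}(v')^2\,dr$, proving \eqref{norm-equivalence} and, as a byproduct, $v \in \mathcal H_{q_s}$.

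The genuinely delicate step, I expect, is legitimizing the boundary-term vanishing, since for a generic radial $u \in D^{1,2}(\R^N)$ the quantity $R^{N-2} u(R)^2$ need not tend to zero at either $R = 0$ or $R = \infty$. The density argument sidesteps this, but relies on the density of radial $C_c^\infty(\R^N \setminus \{0\})$ functions in $D^{1,2}_{\rad}(\R^N)$ with respect to the Hardy-corrected norm. Everything else is algebraic bookkeeping driven by the two key identities $a_\gamma(N-2-a_\gamma) = \gamma$ and $b_{\gamma,s}(m-2) = q_s - 2$.
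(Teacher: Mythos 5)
Your proof is correct and follows essentially the same route as the paper: a change of variable, a pointwise algebraic identity driven by $a_\gamma(N-2-a_\gamma)=\gamma$ and $b_{\gamma,s}(m-2)=q_s-2$, and an integration by parts to eliminate the cross term. The paper performs the substitution $v(r)=r^{a_\gamma b_{\gamma,s}}u(r^{b_{\gamma,s}})$ in one shot and directly expands $(v')^2 = (abr^{ab-1}u(r^b)+br^{ab+b-1}u'(r^b))^2$, whereas you factor through the intermediate function $w(R)=R^{a_\gamma}u(R)$; this two-step decomposition is a cosmetic reorganization (it separates the elimination of the Hardy potential from the rescaling of the ``dimension'' from $m=N-2a_\gamma$ to $q_s$), and both ways the bookkeeping is the same. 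The one place you are more careful than the paper is the boundary term: the paper's computation writes $a(t^{N-2}u^2)'$ under the integral and silently drops it, while you flag it and justify it by density of radial $C_c^\infty(\R^N\setminus\{0\})$ functions in $D^{1,2}_{\rad}$. One small remark on your stated worry: in fact, for any radial $u\in D^{1,2}(\R^N)$ the quantity $R^{N-2}u(R)^2$ \emph{does} tend to zero at both endpoints --- by the radial Hardy inequality $\int_0^\infty R^{N-3}u^2\,dR<\infty$ and $\int_0^\infty R^{N-2}|uu'|\,dR<\infty$, so $R^{N-2}u^2$ is absolutely continuous with limits at $0$ and $\infty$, and those limits must vanish because $R^{N-2}u^2\in L^1(dR/R)$ near both endpoints. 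Your density argument is thus a valid alternative but not strictly needed; either way the step is justified, which the paper does not make explicit.
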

The previous lemma also holds true in balls or annuli provided that $u$ and $v$ satisfy either a Dirichlet or a Neumann boundary condition. Notice that, if $v$ satisfies a Dirichlet boundary condition, the condition $\int_0^\infty r^{q_s-1}(v')^2\ ds<\infty$ is equivalent to require $v'(0)=0$ in \eqref{eq:radial-transf} as shown in \cite{AG18}, in a very similar case.

\begin{proof}
First, we show formally that $v$ satisfies \eqref{eq:radial-transf} in a strong sense provided that $u$ is a strong solution to \eqref{problem}. Then, we prove \eqref{norm-equivalence} which will implies that $v\in \mathcal H_{q_s}$ and that $v$ satisfies weakly \eqref{eq:radial-transf}.
To simplify notation, we drop the subscripts if there is no possible confusion. A straight-forward computation gives
\begin{align*}
v^{\prime \prime}(r)+ \dfrac{q-1}{r}v^\prime (r)&= b^2 r^{ab+2(b-1)} \left( u^{\prime \prime}(r^b) +\left( 2a+1 + \frac{q-2}{b}\right) \frac{ u^\prime (r^b) }{r^b} \right.  \\
& \ \ \ \ \ +\left.  (a^2 - \frac{a}{ b}+\frac{(q-1) a}{b} )\frac{ u (r^b) }{r^{2b}} \right) .
\end{align*}
Using the definitions of $a,b$ and $q$, one can check that
\begin{equation}
\label{lem:equiv-rade1}
2a+1 + \frac{q-2}{b} =N-1,
\end{equation}
and
$$a^2 - \frac{a}{ b}+\frac{(q-1) a}{b}  = (\dfrac{N-2}{2})^2  (1- \nu^2) = \gamma .$$
Thus, noticing that $ab + 2(b-1)-bs - (p-1) ab=0 $ and using \eqref{problem}, we find
\begin{align*}
v'' + \frac{q-1}{r} v' &=C_\gamma b^2 r^{ab+2(b-1)-bs } |u|^{p-2} u\\
&= C_\gamma b^2 r^{ab + 2(b-1)-bs - (p-1) ab} |v|^{p-2} v\\
&= q(q-2)  |v|^{p-2} v.
\end{align*}
This establishes that $v$ is a solution to \eqref{eq:radial-transf}. Next, we are going to prove \eqref{norm-equivalence}. Using a change of variables and \eqref{lem:equiv-rade1}, we obtain

\begin{align*}
\int_0^\infty r^{q-1} (v^\prime (r))^2 dr &= \int_0^\infty r^{q-1} (ab r^{ab-1} u(r^b)+b r^{ab+b-1} u^\prime (r^b))^2 dr \\
&= b \int_0^\infty t^{ 2a+1 + \frac{q-2}{b}} (u^\prime (t) +a \frac{u(t)}{ t} )^2 dt\\
&= b \int_0^\infty \left( t^{N-1}  (u^\prime (t))^2 + a (t^{N-2} u^2 (t) )^\prime + (a^2- a (N-2))   t^{N-3}  u^2 (t) \right) dt\\
&= b \int_0^\infty t^{N-1} ( (u^\prime (t))^2 - \gamma \dfrac{u^2 (t)}{t^2}  )dt.
\end{align*}
This concludes the proof.
\end{proof}

\

As a corollary of the previous result we have the following

\begin{corollary}\label{cor-def-U}
  For  any $\gamma\in (-\infty, \frac {(N-2)^2}4)$ and $s\in[0,2)$, problem \eqref{problem} admits a unique (up to the dilation 
 $\lambda ^{\frac {N-2}2} U(\lambda x)$, $\lambda >0$) radial solution which is given by
  \begin{equation}\label{def:u-gamma-s}
     U_{\gamma}(x):=\frac{ |x|^{\frac {N-2}{2}(\nu_\gamma-1)}}{\left(1+|x|^{(2-s)\nu_\gamma}\right)^{\frac {N-2}{2-s}}}.
       \end{equation}
    \end{corollary}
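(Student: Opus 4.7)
The plan is to transfer \eqref{problem} — restricted to radial functions — into the one-dimensional problem \eqref{eq:radial-transf} via Lemma \ref{lem:equiv-rad}. The transformation $u\leftrightarrow v$ is a bijection between $D^{1,2}_{\rad}(\R^N)$ and $\mathcal H_{q_s}$: the computation carried out in the proof of Lemma \ref{lem:equiv-rad} reverses verbatim, and the norm identity \eqref{norm-equivalence} ensures that the two ambient function spaces are in one-to-one correspondence. Hence classifying radial $D^{1,2}$-solutions of \eqref{problem} reduces to classifying positive weak solutions of \eqref{eq:radial-transf} in $\mathcal H_{q_s}$. Since $q_s=\frac{2(N-s)}{2-s}>2$, this ODE has exactly the structure of the radial critical Sobolev equation in the ``fractional dimension'' $q_s$, so I expect the Aubin--Talenti classification to apply.

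For existence, I would simply exhibit and verify the solution. Using the identity $\tfrac{q_s-2}{2}=\tfrac{N-2}{2-s}$, a direct substitution shows that
\begin{equation*}
v(r)=\frac{1}{(1+r^2)^{(q_s-2)/2}}
\end{equation*}
solves \eqref{eq:radial-transf} weakly, and $q_s>2$ gives $v\in\mathcal H_{q_s}$. Applying the inverse transformation $u(\rho)=\rho^{-a_\gamma}v(\rho^{1/b_{\gamma,s}})$ together with $a_\gamma=\tfrac{N-2}{2}(1-\nu_\gamma)$ and $2/b_{\gamma,s}=(2-s)\nu_\gamma$ produces precisely the function $U_\gamma$ in \eqref{def:u-gamma-s}. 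The one-parameter family $\lambda^{(N-2)/2}U_\gamma(\lambda\cdot)$ is then obtained from the scale invariance of \eqref{problem}.

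The main obstacle is uniqueness. My approach is the Emden--Fowler substitution $w(t)=e^{(q_s-2)t/2}v(e^t)$, which turns \eqref{eq:radial-transf} into the autonomous equation
\begin{equation*}
w''-\left(\frac{q_s-2}{2}\right)^2w+q_s(q_s-2)\,w^{(q_s+2)/(q_s-2)}=0,
\end{equation*}
whose Hamiltonian $E(w,w')=\tfrac12(w')^2-\tfrac12\bigl(\tfrac{q_s-2}{2}\bigr)^{2}w^{2}+\tfrac{(q_s-2)^2}{2}\,w^{2q_s/(q_s-2)}$ is conserved. A short change of variables yields $\int_0^\infty r^{q_s-1}(v')^2\,dr=\int_{\R}\bigl(w'-\tfrac{q_s-2}{2}w\bigr)^{2}\,dt$, so the membership $v\in\mathcal H_{q_s}$, combined with the conservation of $E$ (which keeps the trajectory bounded), forces $w$ to be a positive homoclinic orbit to $0$ with $E\equiv 0$. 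Standard phase-plane analysis on the level set $\{E=0\}$ then gives a unique such orbit up to translation in $t$, and this translation lifts, via Lemma \ref{lem:equiv-rad}, to the dilation $\lambda^{(N-2)/2}U_\gamma(\lambda\cdot)$ in the original variables. The delicate point is exploiting the integrability condition carefully: it is exactly what rules out the singular equilibrium $v(r)=c\,r^{-(q_s-2)/2}$ of the $w$-equation and the non-homoclinic trajectories on other level sets of $E$, none of which descend to $D^{1,2}$-solutions of \eqref{problem}.
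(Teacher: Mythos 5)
Your proof is correct and takes a genuinely different route in its uniqueness step. The paper's own proof simply reduces to the one-dimensional problem \eqref{eq:radial-transf} via Lemma \ref{lem:equiv-rad} and then \emph{cites} Talenti and Caffarelli--Gidas--Spruck for the fact that the ODE admits only the Aubin--Talenti profile $V(r)=(1+r^2)^{-(q_s-2)/2}$ (up to scaling) together with the singular solution $W(r)\sim r^{-(q_s-2)/2}$, the latter of which fails to lie in $\mathcal H_{q_s}$. You instead make this classification self-contained: after the same reduction you perform the Emden--Fowler change of variables $w(t)=e^{(q_s-2)t/2}v(e^t)$, obtain the autonomous Hamiltonian equation, and argue by phase-plane analysis that the integrability constraint $\int_{\R}\bigl(w'-\tfrac{q_s-2}{2}w\bigr)^{2}\,dt<\infty$ (which is exactly $\int_0^\infty r^{q_s-1}(v')^2\,dr<\infty$) rules out the constant equilibrium, periodic orbits around the center, and nonzero energy levels, leaving only the homoclinic orbit to $(0,0)$ at level $E=0$, unique up to $t$-translation. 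Your computation that the $t$-translation lifts to the dilation $\lambda^{(N-2)/2}U_\gamma(\lambda\cdot)$ checks out, and your verification that the bijection $u\leftrightarrow v$ identifies the two function spaces via the norm equivalence \eqref{norm-equivalence} (which is indeed equivalent to the $D^{1,2}$ norm on either side of $\gamma=0$, thanks to the Hardy inequality) is sound. What your approach buys is transparency for the fractional dimension $q_s\notin\N$ (when $s>0$): rather than relying on the reader to confirm that the cited classification results extend to non-integer $q$, you supply a short direct argument valid for any $q_s>2$. The cost is a somewhat longer proof; the paper's is essentially a one-line citation.
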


\begin{proof}
This is a direct consequence of Lemma \ref{lem:equiv-rad} and the facts that problem \eqref{eq:radial-transf} admits only the solution $V(r)=\left(\frac 1{(1+r^2)}\right)^{\frac {q_s-2}2}$, up to the scaling $S_\lambda  (V)=\lambda ^{\frac{q_s-2}2}V(\lambda x)$ with $\lambda>0$
and the solution $W(r)=\left(\frac 1 r\right)^{\frac{q_s-2}2}$. This second solution does not belong to $\mathcal H_{q_s}$, see \cite{Talenti} or \cite{Caffarelli-Gidas-Spruck}.
\end{proof}
  
\

Moreover it has been proved that 
\begin{proposition}\label{prop:existence}
Problem \eqref{problem} does not admit solutions for $\gamma\geq \frac{(N-2)^2}4$ and admits only radial solutions for $\gamma\in (\gamma_1,  \frac{(N-2)^2}4)$.
\end{proposition}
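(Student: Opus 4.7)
The plan is to reduce both assertions to results already cited in the introduction, making use of the dictionary between \eqref{problem-intro} and the Caffarelli-Kohn-Nirenberg equation \eqref{eq:CKN} via $w(x)=|x|^{-a}u(x)$ with $\gamma = a(N-2-a)$ and $s=(b-a)p$. Note that \eqref{problem} and \eqref{problem-intro} differ only by the scaling $u(x)=c^{-2}U(cx)$, so existence/nonexistence and radiality transfer between them.

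For the nonexistence statement, I would treat the borderline case $\gamma=\frac{(N-2)^2}{4}$ first. Here $\nu_\gamma=0$, hence $C_\gamma = (N-s)(N-2)\nu_\gamma^2=0$, and \eqref{problem} reduces to the linear Hardy eigenvalue equation $-\Delta u = \frac{(N-2)^2}{4|x|^2}u$. Testing a weak $D^{1,2}$ solution against $u$ yields equality in Hardy's inequality, which is well known to be unattained in $D^{1,2}(\R^N)$, forcing $u\equiv 0$. For $\gamma>\frac{(N-2)^2}{4}$ the constant $C_\gamma$ turns negative and the right-hand side of \eqref{problem} becomes nonpositive for $u\ge 0$; one then invokes the classical fact that positive solutions to $-\Delta u \leq \frac{\gamma}{|x|^2}u$ with the required decay cannot exist in this regime, because the indicial exponents $\tfrac{N-2\pm\sqrt{(N-2)^2-4\gamma}}{2}$ associated to the Hardy operator become complex (this is also recorded in the survey \cite{GhRo4}).

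For the radiality statement on $(\gamma_1,\frac{(N-2)^2}{4})$, the strategy is to transfer the problem to \eqref{eq:CKN} via the change of variable recalled in the introduction and to apply the rigidity theorem of Dolbeault-Esteban-Loss \cite{DEL-16}, which under \eqref{eq:cond-DEL} yields uniqueness (hence radiality) of nonnegative solutions satisfying \eqref{eq:cond-integral}. The key algebraic check is that the symmetry threshold $\gamma_{FS}$ coincides with $\gamma_1$. Using $(N-s)^2-N^2=-s(2N-s)$ and $(N-s)^2-(N-2)^2=(2-s)(2N-2-s)$, one gets
\begin{equation*}
\gamma_{FS}= \frac{(N-2)^2}{4}\cdot \frac{-s(2N-s)}{(2-s)(2N-2-s)},
\end{equation*}
while a parallel factoring of \eqref{eq:gamma_j} with $j=1$ (using $(2-s)(2N-2-s)-4(N-1)=-s(2N-s)$) gives the same expression, so $\gamma_1=\gamma_{FS}$. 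This matches the DEL-range for $s>0$; the case $s=0$, where $\gamma_1=0=\gamma_{FS}$, is handled by Terracini's theorem \cite{T}. Combined with the fact that any radial solution must be one of the explicit $u_{\gamma,\lambda}$ from Corollary \ref{cor-def-U}, this concludes the proof.

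The main obstacle I anticipate is the nonexistence range $\gamma>\frac{(N-2)^2}{4}$: a mere testing against $u$ is inconclusive because both sides of the resulting identity can a priori be negative, and one genuinely needs the finer information on the Hardy operator near the origin (complex indicial exponents, or equivalently non-existence of a positive principal eigenfunction) to rule out solutions. A secondary but cleaner observation is the exact matching $\gamma_1=\gamma_{FS}$, which is the algebraic linchpin that allows \cite{DEL-16} to cover the full range down to the first degeneracy value appearing in Theorem \ref{introlemma-lin}.
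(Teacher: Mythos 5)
Your proposal follows essentially the same route as the paper, which also discharges both assertions by citation: the nonexistence is attributed to Smets~\cite{Smets} (Proposition~2.1), and the radial symmetry to Terracini~\cite{T} for $s=0$, Chou--Chu~\cite{Chou-Chu} for $s>0,\ \gamma\ge 0$, and Dolbeault--Esteban--Loss~\cite{DEL-16} for the remaining range. Your algebraic verification that $\gamma_1=\gamma_{FS}$ is a worthwhile addition: the paper takes this matching for granted, but it is exactly the identity that makes the DEL rigidity range coincide with $(\gamma_1,\frac{(N-2)^2}{4})$, and your factorizations $(N-s)^2-N^2=-s(2N-s)$, $(N-s)^2-(N-2)^2=(2-s)(2N-2-s)$ and $(2-s)(2N-2-s)-4(N-1)=-s(2N-s)$ are all correct. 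One should also observe, as you implicitly do, that the integrability condition \eqref{eq:cond-integral} needed for DEL is automatic for $D^{1,2}$ solutions of \eqref{problem-intro} via the Hardy--Sobolev inequality, so the transfer through $w=|x|^{-a}u$ is legitimate.

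On the nonexistence side you go further than the paper: your borderline observation that $\nu_{(N-2)^2/4}=0$ forces $C_\gamma=0$, so that testing against $u$ yields equality in Hardy's inequality (never attained in $D^{1,2}$), is a clean and correct self-contained argument for $\gamma=\frac{(N-2)^2}{4}$. For $\gamma>\frac{(N-2)^2}{4}$, however, what you offer is not a proof: the remark that $C_\gamma<0$ concerns the normalized form \eqref{problem}, whereas the substance of the statement is about \eqref{problem-intro} (the scaling $c^{2(p_s-1)}C_\gamma=1$ does not make sense once $C_\gamma\le 0$, so the two formulations are no longer interchangeable), and the appeal to ``complex indicial exponents'' is an informal heuristic rather than a Liouville theorem. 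You correctly diagnose that testing against $u$ is inconclusive in this range, but that diagnosis only highlights the gap; to close it one needs either the precise result from Smets (which uses finer asymptotics near the singularity combined with a Pohozaev-type argument) or an explicit nonexistence-of-positive-supersolutions theorem for $-\Delta-\gamma|x|^{-2}$ in the supercritical Hardy regime. As it stands, this part of your argument should be replaced by a citation to a concrete statement, as the paper does.

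A minor omission: you dropped the reference to Chou--Chu for $0<s<2$ and $\gamma\ge 0$, which the paper mentions as an alternative to DEL in that subrange. This does not affect correctness, since DEL covers the full interval $(\gamma_1,\frac{(N-2)^2}{4})$ once the integrability condition is checked, but it is worth being aware that the paper records both routes.
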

The nonexistence part when $\gamma \geq \frac{(N-2)^2}4$ has been obtained in \cite{Smets}[Proposition 2.1]. Concerning the radial symmetry for $\gamma\in (\gamma_1,  \frac{(N-2)^2}4)$, we first point out that $\gamma_1\leq 0$ with equality if and only if $s=0$. The case $s=0$ has been established in \cite{T} using the moving planes method. See also \cite{Chou-Chu} for the case of $0<s<2$ and $\gamma\geq0 $.
The remaining cases follow from \cite{DEL-16}, see also \cite{DEL-17}. 

\

\subsection{Degeneracy of radial solutions}
In this section we consider the linearization of equation \eqref{problem} at the radial solution $U_{\gamma}$ given by \eqref{def:u-gamma-s} that is to say
\begin{equation}\label{eq:linearized2}
\begin{cases}
  -\Delta v-\frac \gamma{|x|^2}v=(N-s)(N+2-2s)\nu_\gamma^2\frac{|x|^{\nu_\gamma(2-s)-2}}{\left(1+|x|^{(2-s)\nu_\gamma}\right)^2}
v, & \text{ in } \R^N\setminus\{0\},\\
v\in D^{1,2}(\R^N).
\end{cases}\end{equation}
 Observe that \eqref{eq:linearized2} is exactly the linearization of \eqref{problem-intro} at the solution $u_\gamma$ in \eqref{sol-rad} with $\lambda=1$. In our next result, we classify solutions to \eqref{eq:linearized2}. Before proceeding, we introduce some notation and recall some well-known facts. We denote by $Y_j(\theta)$ the $j$-th spherical harmonics i.e. $Y_j$ satisfies
\begin{equation*}
-\Delta_{S^{N-1}} Y_j = \mu_jY_j ,
\end{equation*}
where $\Delta_{S^{N-1}}$ is the Laplace-Beltrami operator on $S^{N-1}$ with the standard metric and $\mu_j$ is the $j$-th eigenvalue
of $-\Delta_{S^{N-1}}$. It is known that, for any $j\in \mathbb{N}$,
\begin{equation}\label{mu-j}
\mu_j = j(N-2+j), 
\end{equation}
whose multiplicity is given by
\begin{equation}\label{dim-mu-j}
 \frac{(N+2j-2)(N+j-3)!}{(N-2)!\,j!} ,
\end{equation}
and that
$$ \mathrm{Ker}\left(\Delta_{S^{N-1}} + \mu_j\right) = \mathbb{Y}_j(\R^N)\left|_{S^{N-1}},\right.$$
where $\mathbb{Y}_j(\R^N)$ is the space of all homogeneous harmonic polynomials of degree $j$ in $\R^N$.

\begin{proposition}\label{lemma-lin}
Let $\gamma< \frac{(N-2)^2}4$. Denote by $\gamma_j$ the values 
\begin{equation}\label{eq:gamma-j}
\gamma_j=\frac{(N-2)^2}4-\frac{j(N-2+j)(N-2)^2}{(2-s)(2N-2-s)},\ j\in\mathbb {N}.
\end{equation}
If $\gamma\neq \gamma_j$ then the space of solutions of \eqref{eq:linearized2}  has dimension 1 and it is spanned by
\begin{equation}\label{eq:def-Z-gamma}
  Z_\gamma(x)=
  \frac{ |x|^{\frac{N-2}2\left(\nu_\gamma-1\right)}\left(1-|x|^{(2-s)\nu_\gamma}  \right)}{\left( 1+ |x|^{(2-s)\nu_\gamma}\right)^{\frac {N-s}{2-s}}},
\end{equation}
where $\nu_\gamma$ is as defined in \eqref{nu-gamma}.\\
If $\gamma=\gamma_j$, then the space of solutions of \eqref{eq:linearized2} has dimension $1+\frac{(N+2j-2)(N+j-3)!}{(N-2)!\,j!} $ and it is spanned by 
\begin{equation} \label{eq:def-Z-gamma-2}
Z_{\gamma_j}(x) \,\, , \,\, Z_{j,i}(x)=\frac{ |x|^{\frac{N-2}2\left(\nu_\gamma-1\right)+\frac{2-s}2 \nu_{\gamma}}}{\left( 1+ |x|^{(2-s)\nu_\gamma}\right)^{\frac {N-s}{2-s}}} Y_{i}(x),
\end{equation}
where $\{Y_{i}\}$, ${i=1,\dots,\frac{(N+2j-2)(N+j-3)!}{(N-2)!j!}}$, form a basis of $\mathbb{Y}_j(\R^N)$. \end{proposition}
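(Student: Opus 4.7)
The plan is to reduce the classification of kernel elements of \eqref{eq:linearized2} to an ODE analysis in the transformed radial variable of Lemma \ref{lem:equiv-rad}, mode by mode in the spherical harmonic decomposition. I would first write any $v\in D^{1,2}(\R^N)$ solving \eqref{eq:linearized2} as $v(r,\theta)=\sum_{j\ge 0}\sum_{i}\phi_{j,i}(r)P_{j,i}(\theta)$, where $\{P_{j,i}\}_i$ is an orthonormal basis of the eigenspace of $-\Delta_{S^{N-1}}$ associated with $\mu_j=j(N-2+j)$. The orthogonality of the spherical harmonics together with the decomposition of the Dirichlet and Hardy norms implies that each coefficient $\phi_{j,i}$ lies in a suitable weighted Sobolev space and satisfies weakly the radial equation
\begin{equation*}
-\phi_{j,i}''-\frac{N-1}{r}\phi_{j,i}'+\frac{\mu_j-\gamma}{r^2}\phi_{j,i}
=(N-s)(N+2-2s)\nu_\gamma^2\,\frac{r^{\nu_\gamma(2-s)-2}}{(1+r^{(2-s)\nu_\gamma})^2}\phi_{j,i}.
\end{equation*}

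I would then apply the change of variable of Lemma \ref{lem:equiv-rad} to each mode, defining $\tilde\phi_{j,i}(r):=r^{\ag b_{\gamma,s}}\phi_{j,i}(r^{b_{\gamma,s}})$. Mimicking the computation in the proof of that lemma and using the algebraic identities satisfied by $a_\gamma,b_{\gamma,s},q_s$ --- in particular $b_{\gamma,s}(2-s)\nu_\gamma=2$ together with the one equivalent to $\nu_\gamma^{2}=1-\frac{4\gamma}{(N-2)^2}$ --- I would show that $\tilde\phi_{j,i}\in\mathcal{H}_{q_s}$ and satisfies
\begin{equation*}
-\tilde\phi_{j,i}''-\frac{q_s-1}{r}\tilde\phi_{j,i}'+\frac{b_{\gamma,s}^{2}\mu_j}{r^2}\tilde\phi_{j,i}
=\frac{q_s(q_s+2)}{(1+r^2)^2}\tilde\phi_{j,i}.
\end{equation*}
One recognises here the $k$-th spherical harmonic mode of the linearization at the profile $V(r)=(1+r^2)^{-(q_s-2)/2}$ of the critical equation $-\Delta W=q_s(q_s-2)W^{(q_s+2)/(q_s-2)}$, once the angular eigenvalue $k(q_s-2+k)$ is identified with the twisted quantity $b_{\gamma,s}^{2}\mu_j$.

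The remaining task is to prove that the above weighted ODE admits a nontrivial $\mathcal{H}_{q_s}$ solution if and only if $b_{\gamma,s}^{2}\mu_j\in\{0,\,q_s-1\}$. For $j=0$ this holds for every admissible $\gamma$ and produces, after transforming back, the generator $Z_\gamma$ of \eqref{eq:def-Z-gamma}, which can also be obtained by differentiating the one-parameter family $u_{\gamma,\lambda}$ with respect to $\lambda$. For $j\ge 1$, the condition $b_{\gamma,s}^{2}\mu_j=q_s-1$ rewrites, via the formula for $\nu_\gamma$, exactly as the equation $\gamma=\gamma_j$ from \eqref{eq:gamma-j}, and the corresponding solution pulled back through the transformation yields the family $Z_{j,i}$. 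Summing the multiplicities given by \eqref{dim-mu-j} then returns the claimed dimensions of the kernel.

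The main obstacle is precisely this dichotomy, since $q_s$ need not be an integer and the classical argument that identifies the kernel of the Aubin--Talenti linearization with the span of dilations and translations on $\R^{q_s}$ does not apply verbatim. To bypass this I would carry out a Frobenius analysis at the regular singular points $r=0$ and $r=\infty$ of the transformed ODE: the finite-energy condition $\tilde\phi_{j,i}\in\mathcal{H}_{q_s}$ selects one indicial root at each endpoint, and a shooting/Wronskian argument then forces the coefficient $b_{\gamma,s}^{2}\mu_j$ to be of the form $k(q_s-2+k)$ for some $k\in\N\cup\{0\}$. A direct examination of the corresponding explicit profiles rules out $k\ge 2$ by an integrability failure at infinity, leaving only the two announced cases.
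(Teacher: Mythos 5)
Your spherical-harmonic decomposition, the Emden--Fowler change of variable, and the reduction to the weighted radial ODE with eigenvalue parameter $-\mu^*=b_{\gamma,s}^2\mu_j$ all match the paper's strategy and are correct. The gap is in the crucial step: deciding for which $\mu^*\le 0$ the transformed ODE admits a nontrivial $\mathcal H_{q_s}$ solution. The paper resolves this variationally: problem \eqref{eq:radial-transf} is the Euler--Lagrange equation of an energy functional, $V$ is its least-energy solution and therefore has Morse index exactly one among radial functions, which caps the number of negative eigenvalues of the weighted problem \eqref{2.17-bis} at one (this equivalence is the content of \cite{AG18}). The two explicit profiles $\hat\psi(r)=r/(1+r^2)^{q_s/2}$ and $\hat\psi_0(r)=(1-r^2)/(1+r^2)^{q_s/2}$ then show that $-\mu^*\in\{q_s-1,\,0\}$ are the only admissible nonpositive values.

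You replace this with a Frobenius/shooting/Wronskian argument whose central claim --- that matching indicial roots forces $-\mu^*=k(q_s-2+k)$ for some integer $k\ge 0$ --- is asserted, not proved, and appears to be incorrect as stated. The indicial analysis only shows that for those special values the two indicial roots at each singular endpoint differ by an integer; that is neither necessary nor sufficient for the existence of an eigenfunction, since the eigenvalue condition is a global matching problem, generically transcendental. Moreover, the singular spectrum $\{\mu^*_i\}_i$ of \eqref{2.17-bis} is an increasing sequence tending to $+\infty$, while $-k(q_s-2+k)$ decreases to $-\infty$, so your candidate list already cannot coincide with the spectrum for $k\ge 2$ --- which is precisely why you then have to invoke an additional, equally unjustified ``integrability failure at infinity'' to discard them. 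Had a shooting argument genuinely quantized the eigenvalue, there would be nothing left to rule out; the extra step signals that the quantization has not actually been obtained. The missing idea is the Morse-index-one bound on the negative spectrum of the weighted operator, which is exactly what makes the paper's two explicit profiles sufficient to conclude.
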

 Theorem \ref{introlemma-lin} $(1)$ and $(2)$ follow by Proposition \ref{lemma-lin} and the scaling invariance of \eqref{problem-intro}.
\begin{proof}
We solve \eqref{eq:linearized2} using the decomposition along the spherical harmonic functions $Y_j(\theta)$. More precisely, we write
\begin{equation*}
v(r,\theta) = \sum_{j=0}^{\infty} \psi_j(r)Y_j(\theta), \qquad \text{where} \qquad r=|x| \,\, , \,\, \theta=\frac{x}{|x|} \in S^{N-1}
\end{equation*}
and
\begin{equation*}
\psi_j(r) = \int_{S^{N-1}}V(r,\theta)Y_j(\theta)\,d\theta.
\end{equation*}
The function $v$ is a weak solution to \eqref{eq:linearized2} if and only if, for any $j\in \N$, $\psi_j(r)$ is a weak solution to
\begin{equation} \label{1.8}
\begin{cases}
-\left(r^{N-1}\psi_j'(r)\right)'+ (\mu_j-\gamma) r^{N-3} \psi_j(r) =\frac{ C_\gamma
(p_s-1) r^{N+(2-s)\nu_\gamma-3}} {\left(1+ r^{(2-s)\nu_\gamma}
\right)^ 2} \psi_j
&
\text{in} \,\,(0,\infty) \\
\psi_j \in \mathcal H_N.
\end{cases}
\end{equation}
Performing the same transformation as in  \eqref{def-v}, we define 
\begin{equation}\label{eq:def-hat-psi}
  \hat \psi_j(r)=r^{a_\gamma b_{\gamma,s}} \psi_j\left(  r^{b_{\gamma,s}}\right) \in \mathcal H_{q_s}.
  \end{equation}
Proceeding as in the proof of Lemma \ref{lem:equiv-rad}, we see that 
\begin{align*}
- \hat{\psi}_j^{\prime \prime}(r) - \frac{q_s-1}{r} \hat{\psi}_j^\prime (r) &= b_{\gamma ,s}^2 r^{ a_\gamma b_{\gamma ,s}+2b_{\gamma ,s}-2 } \left(- \mu_j r^{-2b_{\gamma ,s}} \psi_j (r^{ b_{\gamma ,s}})\right. \\
& \left.+ \,C_\gamma (p_s-1) r^{b_{\gamma ,s} ((2-s)\nu_\gamma -1)} (1+ r^{b_{\gamma ,s} (2-s)\nu_\gamma } )^{-2}\psi_j (r^{ b_{\gamma ,s}}) \right) \\
& = - \mu_j b_{\gamma ,s}^2 r^{-2} \hat{\psi}_j (r)\\
&+ C_\gamma (p_s-1)b_{\gamma ,s}^2 r^{2b_{\gamma ,s} -2 +b_{\gamma ,s} ((2-s)\nu_\gamma -2)} (1+r^2)^{-2} \hat{\psi}_j (r).
\end{align*}

So one can check that $ \hat \psi_j$ solves weakly

\begin{equation} \label{3.5}
\begin{cases}
-\left(r^{q_s-1}\hat\psi_j'(r)\right)'+ \frac{4\mu_j r^{q_s-3}}{(2-s)^2\nu_\gamma^2}\hat \psi_j(r) =\frac{q_s(q_s+2) r^{q_s-1}} {\left(1+ r^2\right)^ 2} \hat \psi_j ,&
\text{in} \,\,(0,\infty) \\
\hat \psi_j \in \mathcal H_{q_s}.
\end{cases}
\end{equation}
The latter can be interpreted as the weighted eigenvalue problem 
\begin{equation}\label{2.17-bis}
-\left(r^{q_s-1}\psi'(r)\right)'-q_s(q_s+2)
\frac{ r^{q_s-1}} {\left(1+ r^2\right)^ 2} \psi=\mu^*r^{q_s-3}\psi , \ \ \text{ in }(0,\infty)
\end{equation}
with $\mu^*=-\frac{4\mu_j }{(2-s)^2\nu_\gamma^2}\leq 0$. This eigenvalue problem is related to the linearization to \eqref{eq:radial-transf} at the solution $V(r)=\left(\frac 1{(1+r^2)}\right)^{\frac {q_s-2}2}$. Since \eqref{eq:radial-transf} is variational, $V(r)$ is a least energy solution and therefore the linearized operator admits at most one negative eigenvalue. The same is true for the  weighted eigenvalue problem \eqref{2.17-bis} that can have at most one negative eigenvalue. This equivalence has been proved in details in \cite[Proposition 3.11]{AG18} in the case of a bounded interval, see also \cite{DGG}. A straightforward computation then shows that 
\begin{itemize}
\item[$i)$] $\mu^*=-(q_s-1)$
is the unique negative eigenvalue to \eqref{2.17-bis} and it is related to the eigenfunction $$\hat \psi(r)=\frac r{(1+r^2)^\frac {q_s}2}.$$
\item [$ii)$] $\mu^*=0$
is a non positive eigenvalue to \eqref{2.17-bis} and it is related to the function $$\hat \psi_0(r)= \frac{1-r^2}{(1+r^2)^{\frac {q_s}2}}.$$ 
\end{itemize}
See \cite{Ambrosetti-Azorero-Peral}. Since $\mu^*=0$ implies $\mu_j=0$ so that $j=0$, scaling back $\hat \psi_0$ and recalling that $Y_0$ is constant, we find that
$$\psi_0(r)=r^{-a_\gamma}\hat \psi_0(r^\frac 1{b_\gamma})= \frac{ r^{\frac{N-2}2\left(\nu_\gamma-1\right)}\left(1-r^{(2-s)\nu_\gamma}  \right)}{\left( 1+ r^{(2-s)\nu_\gamma}\right)^{\frac {N-s}{2-s}} }$$
is a solution to  \eqref{eq:linearized2}, for any value of $\gamma$.

The case $\mu^*=-(q_s-1)$ instead implies that
\begin{equation}\nonumber
\gamma_j=\frac{(N-2)^2}4\left(1-\frac {4\mu_j}{(2-s)^2 (q_s-1)}\right)
\end{equation}
and 
rescaling back, we find that  
$$\psi_j(r)=r^{-a_\gamma}\hat \psi(r^\frac 1{b_\gamma})=\frac{ r^{\frac{N-2}2\left(\nu_\gamma-1\right)+\frac{2-s}2 \nu_{\gamma}}}{\left( 1+ r^{(2-s)\nu_\gamma}\right)^{\frac {N-s}{2-s}}}$$
is a solution to \eqref{1.8} if and only if $\gamma=\gamma_j$.

Thus, when $\gamma=\gamma_j$, the solutions to \eqref{eq:linearized2} are given by the function $Z_\gamma$ in \eqref{eq:def-Z-gamma} (corresponding to $\gamma=\gamma_j$) and by the functions $\psi_j(|x|)Y_{j,k}(\theta)$ where 
$k=1,\dots,   \frac{(N+2j-2)(N+j-3)!}{(N-2)!\,j!}$.
This proves \eqref{eq:def-Z-gamma-2} and finishes the proof.
\end{proof}

\

\begin{remark}By Proposition \ref{lemma-lin} we obtain a sequence of degeneracy points $\gamma_j$, $j\in \N$, such that the radial solution $U_\gamma$
 is degenerate at $\gamma=\gamma_j$. These degeneracy points are isolated and accumulate at $-\infty$. For $j=0$, we obtain the value $\gamma= \frac {(N-2)^2}4$ which is the threshold between existence and nonexistence of solutions to \eqref{problem} (see Proposition \ref{prop:existence}). For $j=1$, we obtain the first degeneracy point $\gamma_1=(N-2)^2\left( \frac 14-\frac{(N-1)}{(2-s)(2N-2-s)}\right)$ of the curve  $U_\gamma$ which is equal to $0$ when $s=0$ and strictly negative when $s>0$. Moreover $\gamma_1\to -\infty$ as $s\to 2^-$.  
    \end{remark}
    
    \
    
    As a corollary to Proposition \ref{lemma-lin}, we get the following
    \begin{corollary}\label{cor:nondegeneracy}
    The radial solution $U_\gamma$ is nondegenerate for $\gamma\in (\gamma_1,  \frac {(N-2)^2}4)$.
    \end{corollary}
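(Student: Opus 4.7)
The plan is to deduce the corollary directly from Proposition \ref{lemma-lin}, which asserts that the kernel of the linearized operator at $U_\gamma$ has dimension equal to $1$ precisely when $\gamma \notin \{\gamma_j : j\in \N\}$, where $\gamma_j$ is defined by \eqref{eq:gamma-j}. Since, in the sense introduced after Theorem \ref{introlemma-lin}, nondegeneracy is the statement that the only kernel element is the one forced by the scaling invariance (i.e.\ a multiple of $Z_\gamma$), it suffices to verify that none of the values $\gamma_j$ lies in the open interval $(\gamma_1,\frac{(N-2)^2}{4})$.

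The key observation is monotonicity of the sequence $(\gamma_j)$. Writing
\[
\gamma_j=\frac{(N-2)^2}{4}-\frac{(N-2)^2}{(2-s)(2N-2-s)}\,\varphi(j),\qquad \varphi(j):=j(N-2+j),
\]
I would note that $\varphi$ is a quadratic in $j$ whose minimum on $\R$ is achieved at $j=-\frac{N-2}{2}<0$, so $\varphi$ is strictly increasing on $[0,\infty)$ (using $N\geq 3$). Since the coefficient in front of $\varphi(j)$ is negative (as $s\in[0,2)$ and $N\geq 3$ both imply $(2-s)(2N-2-s)>0$), the map $j\mapsto \gamma_j$ is strictly decreasing on $\N\cup\{0\}$.

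It follows that $\gamma_0=\frac{(N-2)^2}{4}$ is the supremum of $\{\gamma_j\}$, and for every $j\geq 1$ one has $\gamma_j\leq \gamma_1$. Hence no $\gamma_j$ lies in $(\gamma_1,\frac{(N-2)^2}{4})$: the endpoint $\frac{(N-2)^2}{4}$ is excluded (open interval) and all other $\gamma_j$'s lie at or below $\gamma_1$. Applying Proposition \ref{lemma-lin} $(1)$ then yields that for every $\gamma\in(\gamma_1,\frac{(N-2)^2}{4})$ the kernel of the linearized operator is one-dimensional, spanned by $Z_\gamma$, which is exactly the desired nondegeneracy. There is no substantial obstacle here: the only content is the elementary monotonicity of $j\mapsto \gamma_j$, the rest being a direct appeal to the preceding proposition.
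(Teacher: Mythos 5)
Your argument is correct and matches the paper's reasoning: Corollary \ref{cor:nondegeneracy} is a direct consequence of Proposition \ref{lemma-lin} once one observes that $j\mapsto\gamma_j$ is strictly decreasing, so that $\gamma_0=\frac{(N-2)^2}{4}$ is the largest value and every $\gamma_j$ with $j\ge 1$ lies at or below $\gamma_1$, hence outside the open interval $(\gamma_1,\frac{(N-2)^2}{4})$. The paper states this more tersely (noting only that the degeneracy points are isolated and accumulate at $-\infty$), but the content is the same.
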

    Here by nondegenerate we mean that it admits only the degeneracy due to the dilation invariance of the problem, which is given by the function $Z_\gamma$ in \eqref{eq:def-Z-gamma}. Corollary \ref{cor:nondegeneracy} simplifies a previous result of Robert in \cite{Robert} and extends it until the first negative value $\gamma_1$ when $s>0$. Moreover Proposition \ref{lemma-lin} proves that the nondegeneracy holds true except for the sequence of values $\gamma_j$ in \eqref{eq:gamma-j}.
    
    \

\subsection{Morse index of radial solutions}
In this section we compute the Morse index of the radial solutions $U_\gamma$
 depending on the parameter $\gamma$. We first recall that the Morse index of a radial solution $U_\gamma$ to \eqref{problem} is the maximal dimension of a subspace of $D^{1,2}(\R^N)$ such that the quadratic form corresponding to the linearized operator, namely
\[Q(\psi,\psi):=\int_{\R^N}|\nabla\psi|^2-\gamma \frac{\psi^2}{|x|^2}-(N-s)(N+2-2s)\nu_\gamma^2\frac{|x|^{\nu_\gamma(2-s)-2}}{\left(1+|x|^{(2-s)\nu_\gamma}\right)^2}\psi^2 \ dx
\]
is negative definite. Since the linearized operator is compact in $D^{1,2}(\R^N)$, it admits a sequence of eigenvalues $\l_1<\l_2<\dots$ such that $\l_n\to +\infty$ as $n\to \infty$ with eigenfunctions in $D^{1,2}(\R^N)$
and the Morse index of the radial solution
$U_\gamma$ is finite and coincides with the number, counted with multiplicity, of negative eigenvalues of the linearized operator 
\begin{equation}\label{eq:autov-lin}
\begin{cases}
 L_\gamma v:= -\Delta v-\frac \gamma{|x|^2}v-(N-s)(N+2-2s)\nu_\gamma^2\frac{|x|^{\nu_\gamma(2-s)-2}}{\left(1+|x|^{(2-s)\nu_\gamma}\right)^2}
v=\l v, & \text{ in } \R^N\setminus\{0\},\\
v\in D^{1,2}(\R^N).
\end{cases}\end{equation}
To simplify the computation of the Morse index, instead of considering the eigenvalue problem \eqref{eq:autov-lin}, we consider an auxiliary eigenvalue problem associated to the same linearized operator,
\begin{equation}\label{eq:autov-lin-sing}
\begin{cases}
  L_\gamma v:= -\Delta v-\frac \gamma{|x|^2}v-(N-s)(N+2-2s)\nu_\gamma^2\frac{|x|^{\nu_\gamma(2-s)-2}}{\left(1+|x|^{(2-s)\nu_\gamma}\right)^2}
v=\Lambda \frac {v}{|x|^2}, & \text{ in } \R^N\setminus\{0\},\\
v\in D^{1,2}(\R^N).
\end{cases}\end{equation}
These eigenvalues are well defined thanks to Hardy inequality. Moreover in \cite[Proposition 3.1]{AG18} (see also \cite{DGG} for previous results), it is proved that they are attained when $\gamma+\Lambda <\frac{(N-2)^2}4$. Since the Morse index of $U_\gamma$ only involves the negative eigenvalue $\L_i$, then $\gamma+\Lambda<\frac{(N-2)^2}4$, for every $\gamma<\frac{(N-2)^2}4$ and all these eigenvalues are attained.
 Moreover the following correspondence with the classical eigenvalues $\l_i$ holds, see \cite{DGG} and \cite[Proposition 1.1]{AG18}. 
\begin{lemma}
The number of negative eigenvalues $\Lambda_i$ of \eqref{eq:autov-lin-sing}, counted with multiplicity, coincides with the number of negative eigenvalues 
$\l_j$ of \eqref{eq:autov-lin} counted with multiplicity.\end{lemma}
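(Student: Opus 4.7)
The plan is to deduce both statements from the fact that each count equals the Morse index of the quadratic form
$$Q_\gamma(v,v) := \int_{\R^N}\left(|\nabla v|^2-\frac{\gamma}{|x|^2}v^2-(N-s)(N+2-2s)\nu_\gamma^2\frac{|x|^{(2-s)\nu_\gamma-2}}{(1+|x|^{(2-s)\nu_\gamma})^2}v^2\right)dx$$
on $D^{1,2}(\R^N)$, i.e.\ the maximal dimension of a subspace on which $Q_\gamma$ is negative definite. Indeed, \eqref{eq:autov-lin} and \eqref{eq:autov-lin-sing} are the generalized eigenvalue problems for $Q_\gamma$ against the two positive bilinear forms $\int v^2\,dx$ and $\int|x|^{-2}v^2\,dx$, respectively, both controlled on $D^{1,2}(\R^N)$ through Sobolev and Hardy inequalities.

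First I would decompose each $v\in D^{1,2}(\R^N)$ along the spherical harmonics $Y_j$. Both eigenvalue problems then uncouple into a sequence indexed by $j\in\N$ of radial eigenvalue problems on $(0,\infty)$ sharing the same one-dimensional quadratic form $Q_{j,\gamma}$, the eigenvalue in \eqref{eq:autov-lin} being tested against the weight $r^{N-1}$ and the one in \eqref{eq:autov-lin-sing} against $r^{N-3}$. Applying mode by mode the Emden--Fowler type substitution $\hat\psi(r)=r^{a_\gamma b_{\gamma,s}}\psi(r^{b_{\gamma,s}})$ from the proof of Proposition~\ref{lemma-lin} transports each one-dimensional problem into a perturbation of the compact weighted eigenvalue problem on $\mathcal H_{q_s}$ considered in \cite[Proposition 3.1]{AG18}. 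Hence in each mode $j$ both problems possess a discrete spectrum bounded from below with only finitely many negative eigenvalues.

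The core observation is then purely variational: by the min-max characterization applied in the transformed variable, the number $n_j^-$ of negative eigenvalues in the $j$-th mode coincides, for both weights, with
$$n_j^-=\max\{\dim S : S\subset \mathcal H_{q_s},\ Q_{j,\gamma}|_{S\setminus\{0\}}<0\},$$
which depends only on $Q_{j,\gamma}$ and not on the weight. The inequality $\leq$ is immediate from the spectral decomposition, while $\geq$ follows from a standard orthogonality and dimension-count argument against the eigenbasis associated to each weight. Summing over $j$ with the multiplicities \eqref{dim-mu-j} then gives the equality of the two total counts. The main technical point is to verify that the spectral framework applies symmetrically to both weighted problems; this is precisely what the reduction to the common space $\mathcal H_{q_s}$ achieves, after which the argument reduces to a routine application of min-max for compact self-adjoint operators.
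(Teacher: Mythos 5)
The paper itself offers no proof of this lemma; it is simply asserted with a reference to \cite{DGG} and \cite[Proposition~1.1]{AG18}, so there is no paper argument to compare against. Your core observation is correct and is the natural variational argument: for any positive weight $w$ for which the generalized eigenvalue problem $Q_\gamma(\psi,\phi)=\Lambda\int w\,\psi\phi$ has discrete negative spectrum, the min-max characterization shows that $\Lambda_k<0$ if and only if there is a $k$-dimensional subspace on which $Q_\gamma$ is negative definite; hence the count of negative eigenvalues equals the Morse index of $Q_\gamma$ and is independent of the weight. Both \eqref{eq:autov-lin} (weight $1$) and \eqref{eq:autov-lin-sing} (weight $|x|^{-2}$) therefore share the same count.

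There is, however, an imprecision in the way you justify the needed discreteness. The Emden--Fowler substitution $\hat\psi(r)=r^{a_\gamma b_{\gamma,s}}\psi(r^{b_{\gamma,s}})$ sends the weight $r^{N-3}$ exactly to $r^{q_s-3}$ (this is the computation carried out in the proof of Proposition~\ref{lemma-lin}), which is why \cite[Proposition~3.1]{AG18} applies cleanly to the singular eigenvalue problem. But the weight $r^{N-1}$ does not go to $r^{q_s-1}$: a direct change of variables shows that $\int_0^\infty r^{q_s-1}\hat\psi^2\,dr$ corresponds to $\int_0^\infty t^{N-3+2/b_{\gamma,s}}\psi^2\,dt$, which equals $\int t^{N-1}\psi^2\,dt$ only when $b_{\gamma,s}=1$, i.e.\ $(2-s)\nu_\gamma=2$. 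So the reduction to $\mathcal H_{q_s}$ does \emph{not} place the two eigenvalue problems on a symmetric footing, contrary to what you assert. This is a repairable technical point rather than a fundamental flaw: the finiteness and discreteness of the negative spectrum of \eqref{eq:autov-lin} can be obtained directly, without the substitution, from the fact that the potential $C_\gamma(p_s-1)\,|x|^{(2-s)\nu_\gamma-2}(1+|x|^{(2-s)\nu_\gamma})^{-2}$ is a relatively compact perturbation of $-\Delta-\gamma|x|^{-2}$ (it decays like $|x|^{-2-(2-s)\nu_\gamma}$ at infinity and is subcritical at the origin). Once you have discreteness of the negative spectra for both weights by their respective compactness arguments, your Morse-index identity closes the proof; the spherical-harmonic decomposition and the transport to $\mathcal H_{q_s}$ are then inessential overhead for this particular lemma.
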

As a corollary of the previous result, we obtain

\begin{corollary}\label{cor-morse-index}
The Morse index of the radial solution $U_\gamma$
 is given by the number of negative eigenvalues $\Lambda_i$ of the auxiliary problem \eqref{eq:autov-lin-sing}, counted with multiplicity.
\end{corollary}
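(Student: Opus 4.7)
The plan is essentially to invoke the definition of Morse index together with the preceding lemma, since the corollary is a direct bridge between a variational quantity and the auxiliary spectral problem \eqref{eq:autov-lin-sing}.

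First, I would recall that the operator $L_\gamma$ on $D^{1,2}(\R^N)$ is, via Hardy's inequality and for $\gamma<\frac{(N-2)^2}{4}$, a compact perturbation of a coercive self-adjoint operator: the potential
\[
(N-s)(N+2-2s)\nu_\gamma^2\,\frac{|x|^{\nu_\gamma(2-s)-2}}{(1+|x|^{(2-s)\nu_\gamma})^2}
\]
decays fast enough both near the origin (since $\nu_\gamma(2-s)-2>-2$ after the Kelvin substitution) and at infinity to yield a compact multiplication operator into the dual of $D^{1,2}$. Hence the spectrum of \eqref{eq:autov-lin} is a discrete sequence $\lambda_1<\lambda_2<\dots\to+\infty$, each eigenvalue having finite multiplicity, with eigenfunctions in $D^{1,2}(\R^N)$. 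The variational characterization of the Morse index of $U_\gamma$ then gives immediately that $m(\gamma)$ equals the sum of the multiplicities of the strictly negative $\lambda_j$.

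Next, I would invoke the lemma stated immediately before the corollary: the total multiplicity of the negative eigenvalues $\Lambda_i$ of the weighted eigenvalue problem \eqref{eq:autov-lin-sing} equals the total multiplicity of the negative eigenvalues $\lambda_j$ of \eqref{eq:autov-lin}. Combining these two facts yields the claim of Corollary \ref{cor-morse-index} with no further work.

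The proof is thus essentially a one-liner given the previous lemma; the only subtlety worth emphasizing is why the auxiliary problem with the Hardy-type weight $|x|^{-2}$ on the right-hand side is well posed and has its negative eigenvalues realized as minima of the Rayleigh quotient (which follows from the condition $\gamma+\Lambda<\frac{(N-2)^2}{4}$ being automatically satisfied for $\Lambda<0$, as noted in the text). The real content — the coincidence of negative-eigenvalue counts between \eqref{eq:autov-lin} and \eqref{eq:autov-lin-sing} — is carried by the preceding lemma and is not re-proved here; its proof, in the style of \cite{AG18,DGG}, tracks the sign of eigenvalues along a one-parameter deformation between the two weights, using the monotonicity of Rayleigh quotients and continuity of the spectrum.
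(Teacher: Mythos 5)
Your proposal is correct and follows essentially the same reasoning the paper intends: the corollary is a one-line combination of the standard variational characterization of the Morse index as the total multiplicity of negative eigenvalues of \eqref{eq:autov-lin}, together with the immediately preceding lemma identifying that count with the count of negative eigenvalues of the auxiliary singular problem \eqref{eq:autov-lin-sing}. The paper, in fact, gives no separate proof at all and treats it as an immediate consequence of that lemma, so your added remarks on compactness and on why the condition $\gamma+\Lambda<\frac{(N-2)^2}{4}$ is automatic for $\Lambda<0$ are harmless elaborations of points already made in the surrounding text (the parenthetical ``after the Kelvin substitution'' is unnecessary, since $\nu_\gamma(2-s)-2>-2$ holds directly).
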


\
We point out that the previous corollary also holds if we work in $ D_{\mathcal G}^{1,2}(\R^N)$, where  $\mathcal G$ is a group of  transformation from $\R^N\setminus\{0\}$ into itself, provided that $L_\gamma$ is invariant under $\mathcal G$, see \cite{AG18}. Above, $D^{1,2}_{\mathcal G}(\R^N)$ denotes functions in $D^{1,2}(\R^N)$ which are invariant by the action of $\mathcal G$. We denote by $m^{\mathcal G}(\gamma,s)$ the corresponding Morse index. As a special case, we can take $\mathcal G=\mathcal{O} (N)$, the orthogonal group, then $D^{1,2}_{\mathcal G} (\R^N)= D_{\rad}^{1,2}(\R^N)$ and we denote the radial Morse index by $m^{\rad}(\gamma)$. 
Next, we show that problem \eqref{eq:autov-lin-sing} admits a unique explicit radial eigenfunction.
\begin{lemma}\label{lem:first-radial-sing-eigen}
For every $\gamma\in (-\infty,\frac {(N-2)^2}4)$, there is a unique negative radial singular eigenvalue $\Lambda_1^\rad$ of problem \eqref{eq:autov-lin-sing} given by 
\begin{equation}\label{eq:lambda-1-rad}
  \Lambda_1^\rad =-\frac {(2-s)^2\nu_\gamma^2(q_s-1)}4
\end{equation}
whose corresponding eigenfunction is  
\begin{equation}\label{eq:psi-1-rad}
\psi_1^{\rad}(x)=\frac {|x|^{\frac {N-2}2(\nu_\gamma-1)+\frac {2-s}2\nu_\gamma}}{(1+|x|^{(2-s)\nu_\gamma})^\frac{N-s}{2-s}}.\end{equation}
\end{lemma}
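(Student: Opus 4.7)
The plan is to reduce the \emph{radial} version of the singular eigenvalue problem \eqref{eq:autov-lin-sing} to the one-dimensional weighted eigenvalue problem \eqref{2.17-bis}, which has already been analyzed in the proof of Proposition \ref{lemma-lin}. Concretely, for a radial solution $v$ of \eqref{eq:autov-lin-sing} I would apply the same change of variables as in Lemma \ref{lem:equiv-rad} and in \eqref{eq:def-hat-psi}, namely
\[
\hat v(r):=r^{a_\gamma b_{\gamma,s}}\, v\!\left(r^{b_{\gamma,s}}\right),
\]
with $a_\gamma$, $b_{\gamma,s}$ as in \eqref{a-gamma}. The computation is identical to the one carried out for \eqref{1.8} in the proof of Proposition \ref{lemma-lin}, the only new feature being the additional singular term $\Lambda v/|x|^2$, which contributes a factor $\tfrac{4\Lambda}{(2-s)^2\nu_\gamma^2}\,r^{q_s-3}\hat v$ to the right-hand side after rescaling. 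In this way the radial part of \eqref{eq:autov-lin-sing} is transformed into
\[
-(r^{q_s-1}\hat v')'-q_s(q_s+2)\frac{r^{q_s-1}}{(1+r^2)^2}\hat v=\mu^{*}\,r^{q_s-3}\hat v\qquad\text{in }(0,\infty),
\]
which is exactly \eqref{2.17-bis} with the correspondence
\[
\mu^{*}=\frac{4\Lambda}{(2-s)^2\nu_\gamma^2}.
\]
Since the transformation $v\mapsto\hat v$ is a bijection between $\mathcal H_N$ and $\mathcal H_{q_s}$, radial eigenvalues of \eqref{eq:autov-lin-sing} are in one-to-one correspondence with eigenvalues of this weighted problem, and signs are preserved (since $(2-s)^2\nu_\gamma^2>0$).

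From the discussion in the proof of Proposition \ref{lemma-lin}, case $i)$, the weighted eigenvalue problem \eqref{2.17-bis} admits a unique negative eigenvalue $\mu^{*}=-(q_s-1)$, associated with the eigenfunction $\hat\psi(r)=r/(1+r^2)^{q_s/2}\in\mathcal H_{q_s}$. Inverting the relation between $\mu^{*}$ and $\Lambda$ immediately produces
\[
\Lambda_1^{\rad}=-\frac{(2-s)^2\nu_\gamma^2(q_s-1)}{4},
\]
which is precisely \eqref{eq:lambda-1-rad}. For the eigenfunction, rescaling back via $\psi_1^{\rad}(r)=r^{-a_\gamma}\hat\psi\!\left(r^{1/b_{\gamma,s}}\right)$ and using the identities $1/b_{\gamma,s}=(2-s)\nu_\gamma/2$, $-a_\gamma=\tfrac{N-2}{2}(\nu_\gamma-1)$ and $q_s/2=(N-s)/(2-s)$, a direct substitution gives exactly the expression \eqref{eq:psi-1-rad}.

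The only non-routine step is the uniqueness of the negative eigenvalue of \eqref{2.17-bis}, but this is already established in the proof of Proposition \ref{lemma-lin}, and ultimately rests on the fact that \eqref{eq:radial-transf} is variational and the solution $V(r)=(1+r^2)^{-(q_s-2)/2}$ is a least energy solution, so its linearization carries Morse index exactly one (as proved in \cite[Proposition 3.11]{AG18}). The remaining verifications—namely that $\hat\psi\in\mathcal H_{q_s}$ and hence $\psi_1^{\rad}\in D^{1,2}(\R^N)$—are routine decay checks using that $\hat\psi'(r)=O(1)$ as $r\to 0^{+}$ and $\hat\psi'(r)=O(r^{-q_s})$ as $r\to\infty$, both of which give convergent integrals against the weight $r^{q_s-1}\,dr$ since $q_s>2$.
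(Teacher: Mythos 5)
Your argument is correct and follows the same core strategy as the paper: change variables as in \eqref{eq:def-hat-psi} to reduce the radial part of \eqref{eq:autov-lin-sing} to the weighted eigenvalue problem \eqref{2.17-bis}, then read off $\mu^{*}=-(q_s-1)$ and scale back. The one place where you deviate is in how uniqueness is established: the paper first recalls that $U_\gamma$ minimizes $F$ on the radial Nehari manifold $\mathcal N_\rad$, so its radial Morse index is one, and invokes Corollary~\ref{cor-morse-index} to conclude there is exactly one negative radial singular eigenvalue, while you instead argue that the transformation $v\mapsto\hat v$ is a sign-preserving bijection between the radial eigenvalues of \eqref{eq:autov-lin-sing} and those of \eqref{2.17-bis}, and then appeal to the uniqueness of the negative eigenvalue of \eqref{2.17-bis} already established (via the least-energy/Morse-index-one property of $V$) in the proof of Proposition~\ref{lemma-lin}. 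These two routes are logically equivalent — both ultimately rest on the linearization at the ground state having a single negative direction, stated at the level of $U_\gamma$ in $\mathcal H_N$ in the paper and at the level of $V$ in $\mathcal H_{q_s}$ in your version — so yours is best viewed as the same argument with a small shortcut that avoids re-deriving the Nehari-manifold fact. The norm identity \eqref{norm-equivalence} underpins the bijection and sign preservation you invoke, so no gap remains.
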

\begin{proof}
First, we recall that the radial solution $U_\gamma$ can be obtained by minimizing the functional
$F: D^{1,2}_\rad(\R^N)\to \R$ 
      \begin{equation}\label{eq:functional-F-rad}
        F(u):=\frac 12 \int_{\R^N}|\nabla u|^2-\frac \gamma 2\int_{\R^N}\frac{ u^2}{|x|^2}-\frac {C_\gamma}{p_s} \int_{\R^N}\frac{ |u|^{p_s}}{|x|^s},
          \end{equation}
         on the Nehari set
           \[\mathcal N_\rad:=
            \{u\in D^{1,2}_\rad(R^N) \ u\neq 0:  \int_{\R^N}|\nabla u|^2-\gamma\int_{\R^N}\frac{ u^2}{|x|^2}- C_\gamma \int_{\R^N}\frac{ |u|^{p_s}}{|x|^s}  =0\}. \]
 One can refer to Section \ref{se:6} where this minimization procedure is given in details in the space $D^{1,2}(\R^N)$.  Since $U_\gamma$
  is a minimum on a manifold $\mathcal N_\rad$ of codimension $1$, the radial Morse index of $U_\gamma$ is $1$. So, by the previous corollary, the eigenvalue problem \eqref{eq:autov-lin-sing} admits only one negative radial eigenvalue $\Lambda_1^{\rad}=\Lambda_1$ with corresponding eigenfunction $\psi_1^{\rad}>0$ in $ D^{1,2}_\rad(\R^N):=\mathcal H_N$. 
This function solves weakly \[-(r^{N-1}(\psi_1^\rad)')'-(N-s)(N+2-2s)\nu_\gamma^2\frac{r^{\nu_\gamma(2-s)+N-3}}{\left(1+r^{(2-s)\nu_\gamma}\right)^2}
\psi_1^\rad=(\Lambda_1^\rad +\gamma)r^{N-3} {\psi_1^\rad}, \]
in $(0,\infty)$
with $\Lambda_1^\rad +\gamma<\gamma<\frac{(N-2)^2}4$. Proceeding as in Proposition \ref{lemma-lin}, we find that
$\hat \psi(r)=r^{a_\gamma b_{\gamma,s}}\psi_1^\rad\left(r^{b_{\gamma,s}}\right) \in \mathcal H_{q_s}$ solves  weakly
\[-(r^{q_s-1}\hat \psi')'-q_s(q_s+2)\frac {r^{q_s-1}}{(1+r^2)^2}\hat \psi=\frac {4\Lambda_1^\rad}{(2-s)^2\nu_\gamma^2}r^{q_s-3}\hat \psi ,\ \ \text{ in }(0,\infty).\]
It is well known, see e.g. \cite{Ambrosetti-Azorero-Peral}, that the unique negative eigenvalue of this problem is given by $-(q_s-1)$ with corresponding eigenfunction $\hat \psi(r)=\frac r{(1+r^2)^{\frac {q_s}2}}$. Scaling back, we obtain \eqref{eq:lambda-1-rad} and \eqref{eq:psi-1-rad}.
\end{proof}

\

We are now in a position to compute the Morse index of the radial solution $U_\gamma$ leading therefore to the assertion $(3)$ of Theorem \ref{introlemma-lin}. 
\begin{proposition}\label{prop:Morse-index}
Let $U_\gamma$ be a radial solution to \eqref{problem}. Then its Morse index $m(\gamma)$ is equal to
\begin{equation}\label{eq:morse-index}
m(\gamma) = \sum_{0 \leq  j<\frac{2-N}2+\frac1 2 \sqrt{(N-s)^2-\frac{4\gamma}{(N-2)^2}(2-s)(2N-2-s)}   \atop_{j\  integer} } \frac{(N+2j-2)(N+j-3)!}{(N-2)!\,j!}.
\end{equation}
In particular, the Morse index of $U_\gamma$ changes as $\gamma$ crosses the values $\gamma_j$, defined in \eqref{eq:gamma-j} and  $m(\gamma) \to+\infty$ as $\gamma \to-\infty$ for every fixed $s\in[0,2)$.
\end{proposition}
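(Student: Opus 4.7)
The plan is to invoke Corollary \ref{cor-morse-index} to reduce the computation of $m(\gamma)$ to counting, with multiplicity, the negative eigenvalues of the singular eigenvalue problem \eqref{eq:autov-lin-sing}, and then to decompose along spherical harmonics. Writing $v(r,\theta)=\sum_{j\geq 0}\psi_j(r)Y_j(\theta)$, problem \eqref{eq:autov-lin-sing} splits into a sequence of one-dimensional weighted eigenvalue problems, one for each $j\in\N$,
\begin{equation*}
-\left(r^{N-1}\psi_j'\right)'+(\mu_j-\gamma)r^{N-3}\psi_j-(N-s)(N+2-2s)\nu_\gamma^2\frac{r^{N-3+(2-s)\nu_\gamma}}{\left(1+r^{(2-s)\nu_\gamma}\right)^2}\psi_j=\Lambda\, r^{N-3}\psi_j
\end{equation*}
on $(0,\infty)$, with $\psi_j\in \mathcal H_N$ and $\mu_j=j(N-2+j)$. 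The Morse index is then the sum, over $j\in\N$, of $\dim \mathbb{Y}_j(\R^N)$ times the number of negative eigenvalues of the $j$-th mode problem.

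I would then apply the change of variables $\hat \psi_j(r)=r^{a_\gamma b_{\gamma,s}}\psi_j(r^{b_{\gamma,s}})$ of Lemma \ref{lem:equiv-rad} mode by mode, repeating the computation leading to \eqref{3.5} in the proof of Proposition \ref{lemma-lin} while keeping track of the extra $\Lambda\, r^{N-3}\psi_j$ term (which scales exactly as the $\mu_j\, r^{N-3}\psi_j$ term already handled there). The mode-$j$ problem becomes
\begin{equation*}
-\left(r^{q_s-1}\hat\psi_j'\right)'-q_s(q_s+2)\frac{r^{q_s-1}}{(1+r^2)^2}\hat\psi_j=\mu^*\, r^{q_s-3}\hat\psi_j,\qquad \hat\psi_j\in\mathcal H_{q_s},
\end{equation*}
with spectral parameter $\mu^*=4(\Lambda-\mu_j)\bigl/\bigl((2-s)^2\nu_\gamma^2\bigr)$. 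This is exactly the reduced problem \eqref{2.17-bis}, whose negative spectrum reduces to the single eigenvalue $\mu^*_1=-(q_s-1)$ by the classical argument quoted in Lemma \ref{lem:first-radial-sing-eigen} (cf.~\cite{Ambrosetti-Azorero-Peral}); the next eigenvalue is $\mu^*=0$. Since $\mu_j\geq 0$, the other eigenvalues $\mu^*_k\geq 0$ cannot produce a negative $\Lambda$, and each mode $j$ therefore contributes exactly one candidate negative eigenvalue
$$\Lambda_j=\mu_j-\frac{(2-s)^2\nu_\gamma^2(q_s-1)}{4},$$
with multiplicity $\dim\mathbb{Y}_j(\R^N)=\frac{(N+2j-2)(N+j-3)!}{(N-2)!\,j!}$.

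The remaining step is algebraic. Using $q_s-1=(2N-2-s)/(2-s)$ and $\nu_\gamma^2=1-4\gamma/(N-2)^2$, the condition $\Lambda_j<0$ reads $4\mu_j<(2-s)(2N-2-s)\bigl(1-4\gamma/(N-2)^2\bigr)$. Completing the square via $4\mu_j+(N-2)^2=(2j+N-2)^2$ and invoking the identity $(N-2)^2+(2-s)(2N-2-s)=(N-s)^2$, this becomes
$$(2j+N-2)^2<(N-s)^2-\frac{4\gamma}{(N-2)^2}(2-s)(2N-2-s),$$
i.e., $j\in I_\gamma$. Summing $\dim\mathbb{Y}_j(\R^N)$ over $j\in I_\gamma\cap\N$ yields \eqref{eq:morse-index}. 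The jump of $m(\gamma)$ at $\gamma=\gamma_j$ is immediate from the equality case $\Lambda_j=0$, which reproduces \eqref{eq:gamma-j}, and $m(\gamma)\to\infty$ as $\gamma\to-\infty$ follows from the fact that the right endpoint of $I_\gamma$ grows like $\sqrt{|\gamma|}$. The only technical point — rather than a genuine obstacle — is to rule out further negative eigenvalues within each mode, which was already handled in the second paragraph by the non-negativity of all eigenvalues of \eqref{2.17-bis} other than the principal one.
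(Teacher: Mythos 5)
Your proof is correct and follows essentially the same strategy as the paper: reduce the computation, via Corollary \ref{cor-morse-index}, to counting negative eigenvalues of the singular problem \eqref{eq:autov-lin-sing}, decompose along spherical harmonics, and conjugate each mode to the weighted 1D problem \eqref{2.17-bis}. The only difference is that the paper simply cites \cite{DGG} and \cite{AG18} for the decomposition $\Lambda_i=\Lambda_1^{\rad}+\mu_j$ of the negative spectrum, whereas you re-derive it by carrying the spectral parameter $\Lambda$ through the change of variables of Lemma \ref{lem:equiv-rad} (obtaining $\mu^*=4(\Lambda-\mu_j)/((2-s)^2\nu_\gamma^2)$) and then invoking the uniqueness of the negative eigenvalue of \eqref{2.17-bis}; your version is more self-contained but mathematically identical, and your algebraic reduction to the interval $I_\gamma$ matches the paper's conclusion.
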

\begin{proof}
We know, by \cite{DGG} and \cite{AG18},  that the negative singular eigenvalues $\L_i$ of \eqref{eq:autov-lin-sing} can be decomposed in radial and angular part, and the following identity holds
\begin{equation}\label{eq:decomp-autov}
\L_i=\L_1^{\rad}+\mu_j , \end{equation}
for every $i=1,\dots, m(\gamma)$ and for some $j\in \N_0$, where $\L_1^{\rad}$ and $\mu_j$ are defined respectively in Lemma \ref{lem:first-radial-sing-eigen} and \eqref{mu-j}. Moreover the eigenfunctions corresponding to $\L_i$ are given by 
\begin{equation}\label{decomposition-eigenfunctions}
  \psi_i(x)=\psi_1^\rad (|x|)Y_j(\theta)\end{equation}
  and have the same multiplicity of the spherical harmonics corresponding to $\mu_j$, namely $\frac{(N+2j-2)(N+j-3)!}{(N-2)!\,j!}$.
Thus, to compute the Morse index of $U_\gamma$, we only have to add the multiplicity of the spherical harmonics such that $\L_1^\rad+\mu_j<0$. This last inequality can be rewritten as $0\leq j<\frac{2-N}2+\frac 12 \sqrt{(N-2)^2-4\L_1^\rad}$. 
Using the value of $\L_1^\rad$ given in \eqref{eq:lambda-1-rad}, we get \eqref{eq:morse-index}.
\end{proof}

\

As a corollary we have 
\begin{corollary}\label{cor:G-morse-index-2}
Let $U_\gamma$ be a radial solution to \eqref{problem} and $\mathcal G$ be a group of transformations of $\R^N \backslash \{0\}$. Assume that the operator $L_{\gamma}$ is invariant under $\mathcal{G}$. Then the Morse index of $U_\gamma$ in the symmetric space $D^{1,2}_{\mathcal G}$, is equal to
\[m^{\mathcal G}(\gamma) = \sum_{0 \leq  j<\frac{2-N}2+\frac1 2 \sqrt{(N-s)^2-\frac{4\gamma}{(N-2)^2}(2-s)(2N-2-s)}   \atop_{j\  integer} }  \mathrm{dim}Y_j^{\mathcal G}, \]
where we denote by $Y_j^{\mathcal G}$ the spherical harmonics which are invariant by the action of $\mathcal G$.
\end{corollary}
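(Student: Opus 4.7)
The plan is to mimic the proof of Proposition \ref{prop:Morse-index}, but restricted to the $\mathcal{G}$-invariant subspace $D^{1,2}_{\mathcal G}(\R^N)$. The key observation is that because $L_\gamma$ commutes with the action of $\mathcal{G}$, its restriction to $D^{1,2}_{\mathcal G}$ is self-adjoint and compact (in the same sense as in the full space), so one may again apply Corollary \ref{cor-morse-index} in this symmetric setting: the Morse index $m^{\mathcal G}(\gamma)$ equals the number of negative eigenvalues (counted with multiplicity) of the auxiliary singular eigenvalue problem \eqref{eq:autov-lin-sing} restricted to $D^{1,2}_{\mathcal G}$.

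Next, I would invoke the decomposition already established in the proof of Proposition \ref{prop:Morse-index}: every eigenfunction of \eqref{eq:autov-lin-sing} associated with a negative eigenvalue $\Lambda_i$ takes the product form \eqref{decomposition-eigenfunctions}, namely $\psi_i(x)=\psi_1^{\rad}(|x|)\,Y_j(\theta)$, with $\Lambda_i=\Lambda_1^{\rad}+\mu_j$. Since the radial factor $\psi_1^{\rad}(|x|)$ is automatically $\mathcal{G}$-invariant (rotations in $\mathcal{G}\subset O(N)$ act trivially on radial functions), such a product $\psi_1^{\rad}(|x|)Y_j(\theta)$ belongs to $D^{1,2}_{\mathcal G}$ if and only if the spherical harmonic $Y_j(\theta)$ itself is $\mathcal{G}$-invariant. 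Thus the eigenspace of $\Lambda_i=\Lambda_1^{\rad}+\mu_j$ inside $D^{1,2}_{\mathcal G}$ is isomorphic to the space $Y_j^{\mathcal G}$ of $\mathcal{G}$-invariant spherical harmonics of degree $j$, and contributes $\dim Y_j^{\mathcal G}$ to the symmetric Morse index.

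It only remains to sum over those $j$ for which $\Lambda_1^{\rad}+\mu_j<0$. Substituting the explicit value \eqref{eq:lambda-1-rad} of $\Lambda_1^{\rad}$ and $\mu_j=j(N-2+j)$ from \eqref{mu-j}, this condition is the same inequality
\[
0\leq j<\frac{2-N}{2}+\frac12\sqrt{(N-s)^2-\frac{4\gamma}{(N-2)^2}(2-s)(2N-2-s)}
\]
that was solved in the proof of Proposition \ref{prop:Morse-index}. Adding up the dimensions $\dim Y_j^{\mathcal G}$ over this range yields the stated formula.

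The only delicate point is ensuring that the decomposition into a radial factor times a single spherical harmonic is exhaustive also inside $D^{1,2}_{\mathcal G}$, i.e.\ that no eigenfunction of $L_\gamma$ on $D^{1,2}_{\mathcal G}$ is lost by restricting to products of this form. This follows because the decomposition $L^2(S^{N-1})=\bigoplus_{j\geq 0}\mathbb{Y}_j$ is orthogonal and preserved by $\mathcal{G}$, so every $\mathcal{G}$-invariant function projects on each $\mathbb{Y}_j$ into a $\mathcal{G}$-invariant harmonic, and the standard separation of variables argument of Proposition \ref{lemma-lin}/Proposition \ref{prop:Morse-index} then goes through block by block. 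This is the only step where one needs to be careful; once it is in place, the counting argument above is immediate.
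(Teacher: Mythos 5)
Your proof is correct and takes essentially the same route the paper intends: the paper presents this as an immediate corollary of Proposition \ref{prop:Morse-index}, relying on the remark after Corollary \ref{cor-morse-index} (which cites \cite{AG18}) for the fact that the Morse index in $D^{1,2}_{\mathcal G}$ equals the count of negative eigenvalues of the auxiliary problem \eqref{eq:autov-lin-sing} restricted to $D^{1,2}_{\mathcal G}$, and then filters the decomposition \eqref{decomposition-eigenfunctions} by $\mathcal G$-invariance of the spherical factor exactly as you do. Your closing remark about exhaustiveness of the separation of variables inside $D^{1,2}_{\mathcal G}$ is a helpful point that the paper glosses over.
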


\

\section{A first bifurcation result}\label{se:3}
In this section we turn to the bifurcation problem for which we use a Leray Schauder degree approach which will be extent to an index argument in Section \ref{se:4}.
To this end, we first introduce the functional setting. 
For any function $z$ in $ D^{1,2}(\R^N)$, we denote by $k(z)$ the 
{\em Kelvin transform} of $z$, namely
\[k(z)(x):= \frac 1{|x|^{N-2}}z\left(\frac x{|x|^2}\right), \ \ \text{ in }\R^N\setminus\{0\}\]
which maps $ D^{1,2}(\R^N)$ into itself. Using that $\Delta (k(z)(u))(x)=\frac{1}{|x|^{N+2}}\Delta u (\frac{x}{|x|^2})$, it is easy to check that problem \eqref{problem} is also invariant by Kelvin transform. In particular we denote by $ D^{1,2}_k (\R^N)$ the subset of functions in $ D^{1,2}(\R^N)$ which are invariant by the Kelvin transform, namely
\[D^{1,2}_k:= \ \{z\in D^{1,2}(\R^N)\ : \ z(x)=k(z)(x) \ \text{ in }\R^N\setminus\{0\}\}\]
and we set 
\[
X:= D^{1,2}_k (\R^N) \cap L^\infty(\R^N).\]
The space $X$ is a Banach space with the norm
\[
\norm{g}_X:=\max\{\norm{g}_{1,2},\norm{g}_{\infty}\},  \]
where $\norm{g}_{1,2}^2:=\int_{\R^N}|\nabla g|^2\ dx$ is the standard norm in $D^{1,2}(\R^N)$ and $\norm{g}_{\infty}$ is the standard norm in $L^{\infty}(\R^N)$. 
Observe that any function $v\in X$ satisfies
\begin{equation}\label{eq:decay-v}
  |v(x)|\leq C_v(1+|x|)^{2-N},\end{equation}
  for some constant $C_v$ which depends on $v$. 
We next define the operator $T: \ X\to  X$ as
\begin{equation}\label{def:T}
T(\gamma, v):=\left( -\Delta-\frac \gamma{|x|^2}I\right)^{-1}\left( C_\gamma
 \frac{|v|^{p_s-2}v}{|x|^s}\right),\end{equation}
 so that fixed points for $T$ are solutions to \eqref{problem}. First we show:

\

\begin{lemma}\label{lem:T-well-def}
  The operator $T$ is well defined from $(-\infty,0]\times \ X$ into $ X$.
\end{lemma}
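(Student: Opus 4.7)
The plan is to prove the three requirements in turn: that $T(\gamma,v)$ is uniquely defined as an element of $D^{1,2}(\R^N)$, that it lies in the Kelvin-invariant subspace $D^{1,2}_k(\R^N)$, and finally that it is bounded.

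First, I would set up a Lax–Milgram argument to invert $-\Delta - \gamma/|x|^2$. For $\gamma\leq 0$ the bilinear form
\[a_\gamma(u,w):=\int_{\R^N}\nabla u\cdot\nabla w\,dx-\gamma\int_{\R^N}\frac{uw}{|x|^2}\,dx\]
is manifestly coercive on $D^{1,2}(\R^N)$ since $-\gamma\geq 0$ makes the second term non-negative, while Hardy's inequality gives continuity. For the right-hand side, Hölder's inequality in the conjugate pair $(p_s/(p_s-1),p_s)$ with respect to the measure $|x|^{-s}dx$ gives
\[\Bigl|\int_{\R^N}\frac{|v|^{p_s-2}v\,w}{|x|^s}\,dx\Bigr|\leq\Bigl(\int_{\R^N}\frac{|v|^{p_s}}{|x|^s}\,dx\Bigr)^{(p_s-1)/p_s}\Bigl(\int_{\R^N}\frac{|w|^{p_s}}{|x|^s}\,dx\Bigr)^{1/p_s},\]
and both integrals are finite by the Hardy–Sobolev inequality \eqref{HS-ineq} since $v,w\in D^{1,2}(\R^N)$. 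Thus Lax–Milgram yields a unique $u=T(\gamma,v)\in D^{1,2}(\R^N)$.

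Second, I would verify Kelvin invariance by uniqueness. A direct computation based on $\Delta k(u)(x)=|x|^{-N-2}(\Delta u)(x/|x|^2)$ gives the conjugation identity
\[-\Delta k(u)(x)-\frac{\gamma}{|x|^2}k(u)(x)=\frac{1}{|x|^{N+2}}\Bigl[-\Delta u-\frac{\gamma}{|y|^2}u\Bigr]_{y=x/|x|^2}.\]
Plugging the equation in on the right and using $v(x/|x|^2)=|x|^{N-2}v(x)$ together with the algebraic identity $(N-2)(p_s-1)=N+2-2s$ (which follows from the definition \eqref{eq:def-p_s} of $p_s$), the powers of $|x|$ collapse to give exactly $C_\gamma|v(x)|^{p_s-2}v(x)/|x|^s$. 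Hence $k(u)$ solves the same equation as $u$, and uniqueness forces $k(u)=u$.

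Third, I would establish the $L^\infty$-bound. Because $v\in X$, the right-hand side $f:=C_\gamma|v|^{p_s-2}v/|x|^s$ is bounded by $\|v\|_\infty^{p_s-1}/|x|^s$ near the origin and, using the decay \eqref{eq:decay-v}, decays like $|x|^{-(N+2-s)}$ at infinity; this places $f$ in $L^q(\R^N)$ for $q$ in a nontrivial range. Standard regularity for the operator $-\Delta-\gamma/|x|^2$ (Brezis–Kato bootstrap, using Hardy's inequality to absorb the potential term on small balls) combined with the linear estimates collected in Appendix \ref{se:7} gives $u\in L^\infty_{loc}(\R^N\setminus\{0\})$ together with a bound up to the origin. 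Finally, boundedness at infinity is automatic from Step 2: for $|x|\geq 1$, $|u(x)|=|x|^{2-N}|u(x/|x|^2)|\leq\|u\|_{L^\infty(\overline{B_1})}$. The delicate point I expect to be the main obstacle is precisely the $L^\infty$-bound near $x=0$, where the Hardy potential and the singular weight $|x|^{-s}$ compete; this is where the results of Appendix \ref{se:7} are essential.
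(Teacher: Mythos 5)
Your overall structure — (1) uniqueness of the $D^{1,2}$ weak solution, (2) Kelvin invariance by uniqueness, (3) $L^\infty$ bound — mirrors the paper's exactly. Steps 1 and 2 are correct and essentially identical to the paper's: where you invoke Lax--Milgram, the paper cites Lemma~\ref{lem:appendix-1}; where you argue Kelvin invariance directly from uniqueness of the weak solution, the paper observes $k(g)-g$ solves the homogeneous equation and invokes the maximum principle Lemma~\ref{lem:appendix-2}, which is the same thing. Your conjugation identity and exponent bookkeeping are right, and your Kelvin-invariance shortcut for the bound at infinity ($|u(x)|=|x|^{2-N}|u(x/|x|^2)|$) is a clean observation.

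Step 3 is where you diverge from the paper and where the proposal remains unfinished. The paper's argument is concrete: from the decay \eqref{eq:decay-v} one bounds the right-hand side by an explicit radial profile, then the comparison principle Lemma~\ref{lem:appendix-3} gives $|g|\leq Cw$ with $w$ the solution of \eqref{w}, and Lemma~\ref{lem-decad-w} provides the explicit two-sided decay of $w$ showing $w\in L^\infty$ for $\gamma\leq 0$. You instead propose a Brezis--Kato bootstrap. This is plausible — for $\gamma\leq 0$ the Hardy potential has the favorable sign $-\gamma/|x|^2\geq 0$, so it only adds coercivity and a Moser iteration should close — but you do not carry it out, and there is a caveat worth naming: the standard Brezis--Kato hypothesis $V\in L^{N/2}$ \emph{fails} for $V=\gamma/|x|^2$, so one cannot cite that result off the shelf; one must exploit the sign condition or retreat to the barrier argument. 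Since you yourself identify the $L^\infty$ bound at the origin as the ``delicate point'' and point toward the Appendix, you are in effect pointing at the paper's actual mechanism without executing it. The barrier comparison in the paper is the more robust choice here: it avoids the iteration entirely and gives quantitative decay rates (used elsewhere, e.g.\ in the compactness proof of Lemma~\ref{lem:comp}) as a free byproduct.
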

\begin{proof}
  Let $v\in X$. Using \eqref{eq:decay-v},  we have $\frac{|v|^{p_s-2}v}{|x|^s}\sim |x|^{-s}$ at zero and $\frac{|v|^{p_s-2}v}{|x|^s}\sim |x|^{s-N-2}$ at infinity, showing that  
  $\frac{|v|^{p_s-2}v}{|x|^s}\in L^q(\R^N)$ for any $1<q<\frac N s$ (for every $1<q$ if $s=0$). In particular, we have $ \frac{|v|^{p_s-2}v}{|x|^s}\in L^{\frac{2N}{N+2}}(\R^N)$.
Then, by Lemma \ref{lem:appendix-1} 
there exists a unique $g=T(\gamma,v)\in D^{1,2}(\R^N)$ such that $g$ is a weak solution to   
  \begin{equation}\label{eq:linear}
      -\Delta g-\frac \gamma{|x|^2}g= C_\gamma
      \frac {|v|^{p_s-2}v}{|x|^s}\  \ \text{ in }\R^N\setminus \{0\}.
  \end{equation}
Since $v\in D^{1,2}_k (\R^N)$, it is easy to check that the Kelvin transform of $g$, namely $k(g)$ also solves \eqref{eq:linear}. So  $k(g)-g$ is a solution to $-\Delta (k(g)-g)-\frac \gamma{|x|^2} (k(g)-g)=0$. The maximum principle (see Lemma \ref{lem:appendix-2}) then implies that $k(g)=g$ showing that $g$ is Kelvin invariant.\\
Next we show that $g$ belongs to  $L^{\infty}(\R^N)$. 
Since $g$ weakly solves \eqref{eq:linear} and $v$ satisfies \eqref{eq:decay-v}, the comparison principle (Lemma \ref{lem:appendix-3}) implies that 
$|g|\leq C w$, where $w\in D^{1,2}(\R^N)$ is a weak solution to
 \[-\Delta w- \frac \gamma{|x|^2} w=\frac 1{|x|^s (1+|x|)^{(N-2)(p_s-1)}} \ \  \text{in } \R^N\setminus\{0\}.\]
Then, by Lemma \ref{lem-decad-w}, for any $\gamma\leq0$, $g$  belongs to $L^{\infty}(\R^N)$. 
\end{proof}

\

When $0<\gamma<\frac{(N-2)^2}4$ the space $X$ is not the correct space to consider. This is due to the fact that the radial solution $U_\gamma$
 is not bounded at the origin for $\gamma>0$ and therefore does not belong to $L^{\infty}$. This suggests that a singularity of order $\frac{N-2}2(1-\nu_\gamma)$ in the origin has to be allowed to consider this case. But, since we already know that problem \eqref{problem} admits only radial solutions when $\gamma\geq 0$, a non-radial bifurcation can happen only when $\gamma<0$. Thus, it is not restrictive to consider only solutions which belong to $L^{\infty}$.

\

\begin{remark}[Regularity of $g$]\label{rem:regularity}
  In the proof of the previous Lemma, we have shown that if $v\in X$ and $g=T(\gamma,v)$ with $\gamma\leq 0$, then $|g|\leq Cw$, with $w$ defined as in Lemma \ref{lem-decad-w}. This implies that $g(x) \sim |x|^\beta$ at zero and $g(x) |x|^{N-2}\leq C$ at infinity where $\beta:=\min\{2-s, -a_\gamma\}>0$ so that $\frac \gamma{|x|^2}g(x)$ and $\frac{|v|^{p_s-2}v}{|x|^s}$ are in $L^{q}(\R^N)$ for some $\frac N2 <q < \min \{\frac{N}{2-\beta} ,\frac{N}{s}\}$. So, by elliptic regularity theory, we have that $g\in W^{2,q}(\R^N)$ and, since $q>\frac N2$, we also deduce that $g\in C^{0,\alpha}_{loc}(\R^N\setminus\{0\})$ for some $0<\alpha<1$. Furthermore, $\frac \gamma{|x|^2}g(x)$ and $\frac{|v|^{p_s-2}v}{|x|^s}$ belong to $L^{\infty}(\Omega) \cap H^1(\Omega)$ if $\Omega$ is any subset of $\R^N$ such that $\bar \Omega\subset\R^N\setminus\{0\}$, meaning that $-\Delta g=f(x)$ with $f\in L^{q}(\Omega)\cap H^1(\Omega)$ for $q>N$, so that $g\in W^{2,q}(\Omega)\cap H^{3}(\Omega)$ and hence $g\in C^{1,\a}_{loc}(\R^N\setminus\{0\})$.

  \end{remark}

Our choice of $X$ or more precisely its invariance by Kelvin transform is motivated by the fact that it ``cancels'' the dilatation invariance of problem \eqref{problem}. The following simple uniqueness result concerning radial solutions is an illustration of this fact.
\begin{lemma}
\label{uniqreadialX}
For any $\gamma\leq 0$ fixed, the operator $T$ in \eqref{def:T} admits a unique radial fixed point in $X$, given by 
\[ U_\gamma (x):=\frac{|x|^{\frac {N-2}2(\nu_\gamma-1)}}{\left(1+|x|^{(2-s)\nu_\gamma}\right)^{\frac {N-2}{2-s}}}.
       \]
\end{lemma}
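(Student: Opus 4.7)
The plan is to reduce the problem to a one‑parameter family via Corollary~\ref{cor-def-U} and then show that Kelvin invariance pins down a single parameter. First I would observe that radial fixed points of $T(\gamma,\cdot)$ in $X$ are exactly the radial solutions of \eqref{problem} that lie in $X=D^{1,2}_k(\R^N)\cap L^\infty(\R^N)$. By Corollary~\ref{cor-def-U}, every such radial solution, dropping the Kelvin/$L^\infty$ constraint, has the form
\[
u_\lambda(x):=\lambda^{\frac{N-2}{2}}U_\gamma(\lambda x)
=\frac{\lambda^{\frac{N-2}{2}\nu_\gamma}|x|^{\frac{N-2}{2}(\nu_\gamma-1)}}{\bigl(1+\lambda^{(2-s)\nu_\gamma}|x|^{(2-s)\nu_\gamma}\bigr)^{\frac{N-2}{2-s}}},\qquad\lambda>0.
\]
So the task reduces to determining which values of $\lambda$ produce an element of $X$.

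Next I would verify the $L^\infty$ condition. For $\gamma\le 0$ one has $\nu_\gamma\ge 1$, so the exponent $\frac{N-2}{2}(\nu_\gamma-1)$ at the origin is nonnegative and the denominator stays bounded away from $0$; at infinity $u_\lambda$ decays like $|x|^{-\frac{N-2}{2}(\nu_\gamma+1)}$. Hence $u_\lambda\in L^\infty(\R^N)$ for every $\lambda>0$ whenever $\gamma\le 0$, and the membership $u_\lambda\in D^{1,2}(\R^N)$ is part of Corollary~\ref{cor-def-U}. Thus the only remaining restriction is Kelvin invariance.

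For the Kelvin constraint I would compute $k(u_\lambda)$ directly. Writing $A=\tfrac{N-2}{2}(\nu_\gamma-1)$, $B=(2-s)\nu_\gamma$, $C=\tfrac{N-2}{2-s}$, so that $BC-(N-2)=2A$, a short manipulation using $|x/|x|^2|=1/|x|$ gives
\[
k(u_\lambda)(x)=\frac{1}{|x|^{N-2}}\,u_\lambda\!\left(\tfrac{x}{|x|^2}\right)
=\frac{\lambda^{\frac{N-2}{2}\nu_\gamma}|x|^{A}}{\bigl(\lambda^{B}+|x|^{B}\bigr)^{C}}
=u_{1/\lambda}(x).
\]
Since the map $\lambda\mapsto u_\lambda$ is injective (two distinct $\lambda$'s produce two distinct profiles, as one sees by evaluating at any fixed $x\ne 0$), the equality $k(u_\lambda)=u_\lambda$ forces $\lambda=1/\lambda$, i.e.\ $\lambda=1$. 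Therefore $U_\gamma=u_1$ is the unique radial fixed point of $T(\gamma,\cdot)$ in $X$.

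The only mildly delicate step is the identity $k(u_\lambda)=u_{1/\lambda}$, but it is purely algebraic once the relation $BC-(N-2)=2A$ between the exponents is recorded. Existence of $U_\gamma$ as a fixed point follows because $U_\gamma$ solves \eqref{problem} (Corollary~\ref{cor-def-U}), satisfies $k(U_\gamma)=U_{\gamma}$ by the case $\lambda=1$ of the computation above, and lies in $L^\infty(\R^N)$ by the exponent analysis, so that $U_\gamma\in X$ and $T(\gamma,U_\gamma)=U_\gamma$.
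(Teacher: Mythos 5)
Your proof is correct and follows essentially the same route as the paper: reduce to the one-parameter family $u_\lambda=\lambda^{\frac{N-2}{2}}U_\gamma(\lambda\cdot)$ via Corollary~\ref{cor-def-U}, check $L^\infty$-membership for $\gamma\le 0$ using $\nu_\gamma\ge 1$, and show Kelvin invariance selects $\lambda=1$. The only difference is that the paper states ``it is easy to check that only $\lambda=1$ is Kelvin invariant'' without the computation, whereas you spell out the identity $k(u_\lambda)=u_{1/\lambda}$ (correctly, via $BC-(N-2)=2A$); this is a welcome amount of extra detail.
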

\begin{proof}
By Corollary \ref{cor-def-U}, we know that all radial solutions to \eqref{problem} are given by
  \begin{equation*}
    \lambda^{\frac {N-2}2 \nu_\gamma} \frac{|x|^{\frac {N-2}2(\nu_\gamma-1)}}{\left(1+\lambda  |x|^{(2-s)\nu_\gamma}\right)^{\frac {N-2}{2-s}}}, \text{ where } \lambda > 0.
        \end{equation*}
It is easy to check that only the one corresponding to $\lambda=1$ is kelvin invariant. Furthermore $U_\gamma\in L^{\infty}(\R^N)$ for every $\gamma\leq 0$. 
\end{proof}

Thus, we can say that $(\gamma, U_\gamma)$ is a curve of fixed points of $T$ in $(-\infty, 0]\times X$. 
 
  Our choice of $X$ has also an influence on the degeneracy of the solution $U_\gamma$. Indeed, 
consider the Fr\'echet derivative $T_v$ of the operator $T(\gamma,v)$ at the radial solution $U_\gamma$, namely
 \[ T_v(\gamma, U_\gamma):= \left(-\Delta -\frac{\gamma}{|x|^2}I\right)^{-1}\left( C_\gamma(p_s-1)\frac {U_\gamma^{p_s-2}}{|x|^s}\right).\]
 \begin{lemma}\label{lem-nondegeneracy}
 The operator $I-T_v(\gamma, U_\gamma) :\ (-\infty, 0]\times X\to X$ is invertible for $\gamma\neq \gamma_j$,  $j=1,2,\dots ,$ where $\gamma_j$ are given by \eqref{eq:gamma-j}.
 \end{lemma}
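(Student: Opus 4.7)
The plan is to deploy the Fredholm alternative: first show that $T_v(\gamma,U_\gamma)$ is a compact linear operator on $X$, so that $I-T_v(\gamma,U_\gamma)$ is Fredholm of index zero; then verify injectivity by computing its kernel via Theorem \ref{introlemma-lin}, combined with the observation that the only nontrivial $D^{1,2}$-kernel element is Kelvin anti-invariant, hence not in $X$.

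For compactness, given a bounded sequence $\{v_n\}\subset X$, the uniform decay \eqref{eq:decay-v} together with the pointwise behavior of $U_\gamma$ at $0$ and at infinity guarantees that $f_n:= C_\gamma(p_s-1)U_\gamma^{p_s-2}v_n/|x|^s$ is bounded in $L^q(\R^N)$ for some $q>N/2$. The resolvent estimates of Lemma \ref{lem:appendix-1} and the bootstrap in Remark \ref{rem:regularity} produce uniform bounds for $g_n:=T_v(\gamma,U_\gamma)v_n$ in $D^{1,2}(\R^N)\cap L^\infty(\R^N)$ and local uniform bounds in $C^{1,\alpha}_{\mathrm{loc}}(\R^N\setminus\{0\})$, while the comparison principle (Lemma \ref{lem:appendix-3}) combined with Lemma \ref{lem-decad-w} provides a uniform pointwise barrier $|g_n(x)|\le Cw(x)$ with $w\in X$ decaying both at $0$ and at infinity. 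A diagonal Arzelà–Ascoli extraction then yields a subsequence converging strongly in the full $X$-norm.

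For injectivity, any $v\in X$ with $v=T_v(\gamma,U_\gamma)v$ is a $D^{1,2}$-weak solution of the linearized equation \eqref{eq:linearized2}. For $\gamma\neq\gamma_j$, Theorem \ref{introlemma-lin}(1) asserts that this solution space is one-dimensional, generated by $Z_\gamma$ in \eqref{eq:def-Z-gamma}. A direct substitution $x\mapsto x/|x|^2$, using the identity
$$(1+|x|^{-(2-s)\nu_\gamma})^{(N-s)/(2-s)}=|x|^{-(N-s)\nu_\gamma}(1+|x|^{(2-s)\nu_\gamma})^{(N-s)/(2-s)},$$
shows that $k(Z_\gamma)=-Z_\gamma$, i.e.\ $Z_\gamma$ is Kelvin anti-invariant and therefore does not belong to $D^{1,2}_k$. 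Hence the kernel in $X$ reduces to $\{0\}$, and the Fredholm alternative delivers the bijectivity of $I-T_v(\gamma,U_\gamma)$.

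The main obstacle is the compactness step in the full $X$-norm, which mixes the $D^{1,2}$ and $L^\infty$ topologies. The Kelvin-invariant functional setting is crucial: it upgrades the mere $L^{2^*}$ integrability of elements of $D^{1,2}$ into the genuine pointwise decay $|v(x)|\le C_v(1+|x|)^{2-N}$, which in turn yields $L^\infty$-compactness at infinity through the barrier $w$. The purely algebraic observation that $Z_\gamma$ flips sign under $k$ then closes the loop: the conformal/dilation degeneracy of \eqref{problem} is killed precisely by imposing Kelvin symmetry, leaving a genuine nondegeneracy at every $\gamma\in(-\infty,0]\setminus\{\gamma_j\}_{j\geq1}$.
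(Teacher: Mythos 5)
Your argument is essentially the paper's: both reduce to showing $\mathrm{Ker}(I-T_v(\gamma,U_\gamma))=\{0\}$ in $X$ by identifying kernel elements with $D^{1,2}$-solutions of \eqref{eq:linearized2}, invoking Proposition \ref{lemma-lin} to conclude the only candidate is $Z_\gamma$, and then computing $k(Z_\gamma)=-Z_\gamma$ to exclude it from $D^{1,2}_k$. You are a bit more explicit in spelling out the Fredholm-alternative/compactness step (which the paper treats implicitly and covers separately via Lemma \ref{lem:comp}), but the substance is the same.
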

 \begin{proof}
First we notice that a function $w\in \mathrm{Ker}\left(I-T_v(\gamma, U_\gamma
)\right)$ if and only if $w$ weakly solves \eqref{eq:linearized2} and $w\in X$ by Lemma \ref{lem:T-well-def}. By Lemma \ref{lemma-lin}, we know that the unique solution to \eqref{eq:linearized2} in $D^{1,2}(\R^N)$ when $\gamma\neq \gamma_j$ is the function $Z_\gamma\in D^{1,2}(\R^N)\cap L^{\infty}(\R^N)$ defined in \eqref{eq:def-Z-gamma}. The Kelvin transform of $Z_\gamma$ gives
 \[k(Z_\gamma)(x)=-Z_\gamma(x)\]
So $Z_\gamma\notin X$ which implies that $\mathrm{Ker}\left(I-T_v(\gamma, U_\gamma)\right)=\{0\}$ when $\gamma\neq \gamma_j$.  
\end{proof}

\

\begin{remark}
\label{rmkdegen}
By Lemma \ref{lem-nondegeneracy}, when $\gamma \in(-\infty,0]$, the unique points at which the operator $I-T_v(\gamma, U_\gamma)$ is not invertible are the values $\gamma_j$ defined in \eqref{eq:gamma-j} which are explicitly computed and isolated. At these points $\gamma_j$, the degeneracy of $U_\gamma$ is given by the function $Z_{j,i}\in X$. 
  \end{remark}
\

\begin{lemma}\label{lem:comp}
  The operator $T$ is continuous with respect to $\gamma$ and is compact from $X$ into $X$ for any fixed $\gamma\in (-\infty,0]$.
\end{lemma}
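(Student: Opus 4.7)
\emph{Proof plan.} The statement splits into two independent parts: continuity of $\gamma\mapsto T(\gamma,v)$ for fixed $v\in X$, and compactness of $v\mapsto T(\gamma,v)$ for fixed $\gamma\in(-\infty,0]$. For the first, I would take $\gamma_n\to\gamma$ in $(-\infty,0]$ and set $g_n:=T(\gamma_n,v)$, $g:=T(\gamma,v)$. Subtracting the equations satisfied by $g_n$ and $g$, the difference solves weakly
\[ -\Delta(g_n-g)-\frac{\gamma_n}{|x|^2}(g_n-g) = (\gamma_n-\gamma)\frac{g}{|x|^2} + (C_{\gamma_n}-C_\gamma)\frac{|v|^{p_s-2}v}{|x|^s}. \]
Testing with $g_n-g$ and using Hardy's inequality together with $\gamma_n\leq 0$, the left-hand side bounds $\|g_n-g\|_{D^{1,2}}^2$ from below, while the right-hand side is estimated by H\"older's and Hardy's inequalities against $\|g_n-g\|_{D^{1,2}}$. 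This yields $D^{1,2}$ convergence as $|\gamma_n-\gamma|\to 0$, since $\gamma\mapsto C_\gamma$ is continuous.

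For compactness, let $\{v_n\}\subset X$ be bounded with $\|v_n\|_X\leq M$. The Kelvin invariance combined with the $L^\infty$ bound gives the uniform pointwise estimate $|v_n(x)|\leq CM(1+|x|)^{2-N}$. Up to a subsequence, $v_n\rightharpoonup v$ in $D^{1,2}(\R^N)$ with pointwise a.e.\ convergence; the limit $v$ inherits Kelvin invariance and the same pointwise bound, so $v\in X$. Setting $f_n:=C_\gamma|v_n|^{p_s-2}v_n/|x|^s$, the uniform dominating function $C|x|^{-s}(1+|x|)^{-(N-2)(p_s-1)}$ lies in $L^{2N/(N+2)}(\R^N)$, so dominated convergence yields $f_n\to f:=C_\gamma|v|^{p_s-2}v/|x|^s$ in $L^{2N/(N+2)}$. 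Continuity of the resolvent from $L^{2N/(N+2)}$ to $D^{1,2}$ (Lemma \ref{lem:appendix-1}) then gives $g_n:=T(\gamma,v_n)\to g:=T(\gamma,v)$ in $D^{1,2}(\R^N)$.

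The remaining, and in my view main, technical point is the upgrade from $D^{1,2}$ to $L^\infty$ convergence, which the space $X$ requires. Here I would invoke the comparison principle of Lemma \ref{lem:appendix-3} together with Lemma \ref{lem-decad-w} to obtain a uniform majorant $|g_n|\leq Cw$, where $w$ vanishes at the origin like $|x|^\beta$ with $\beta>0$ and decays at infinity like $|x|^{2-N}$. Given $\varepsilon>0$, I choose $0<r<R$ so that $\sup_{|x|\notin[r,R]}w<\varepsilon$, which makes $|g_n-g|$ uniformly small outside the annulus $A_{r,R}=\{r\leq|x|\leq R\}$. Inside $A_{r,R}$, the uniform $C^{1,\alpha}_{\mathrm{loc}}(\R^N\setminus\{0\})$ bound from Remark \ref{rem:regularity} together with Ascoli--Arzel\`a extracts a subsequence converging uniformly; a diagonal argument letting $r\to 0$ and $R\to\infty$ then gives $\|g_{n_k}-g\|_\infty\to 0$. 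The same annular truncation handles the $L^\infty$ part of the continuity assertion. The essential difficulty is the non-compactness of $\R^N$: the Kelvin invariance in the definition of $X$ combined with the decay estimates of the Appendix are precisely what make uniform tail control at $0$ and $\infty$ work.
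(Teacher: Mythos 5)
Your proof is correct and follows the same overall strategy as the paper: uniform pointwise decay from Kelvin invariance plus the $L^\infty$ bound, a uniform majorant via the comparison principle and Lemma \ref{lem-decad-w}, and a split into a compact annulus with uniformly small tails at $0$ and $\infty$. Where you depart, your route is actually more direct. For the $D^{1,2}$ part of compactness you apply dominated convergence to $f_n=C_\gamma|v_n|^{p_s-2}v_n/|x|^s$ and then use boundedness of the resolvent $L^{2N/(N+2)}\to D^{1,2}$ (which follows from Lemma \ref{lem:appendix-1} and the a priori estimate); the paper instead extracts a weakly convergent subsequence of $g_n$, checks convergence of the associated quadratic forms using the uniform decay estimates, and concludes strong convergence by invoking a ``weak $+$ energy $\Rightarrow$ strong'' lemma from \cite{DGG}. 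Likewise, for continuity in $\gamma$ you subtract the two equations and test with $g_n-g$, yielding the quantitative bound $\|g_n-g\|_{1,2}\lesssim |\gamma_n-\gamma|+|C_{\gamma_n}-C_\gamma|$, whereas the paper again passes through weak compactness and the same identification argument. For the $L^\infty$ step on the annulus you use the uniform $C^{1,\alpha}_{\mathrm{loc}}$ bounds of Remark \ref{rem:regularity} and Arzel\`a--Ascoli; this does require identifying the locally uniform limit with $g$ (via a.e.\ convergence along the subsequence), a point you leave implicit but which is routine. The paper instead notes that $|v_n|^{p_s-2}v_n\to|v|^{p_s-2}v$ in $L^p$ on the annulus and concludes by elliptic estimates; both are fine. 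Finally, the diagonal over $r\to 0$, $R\to\infty$ is unnecessary: for each $\varepsilon$ one annulus $A_{r,R}$ suffices, and one then passes to a single subsequence; this is a presentational point, not a gap.
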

\begin{proof}
First we prove that it is compact from $X$ to $X$.   Let $(v_n)_n \subset X$ be such that $\|v_n\|_X \leq C$, for some constant $C$ independent of $n$. Since $\norm{v_n}_{1,2}\leq C$, up to a subsequence, we have that $v_n\rightharpoonup    v$ weakly in $D^{1,2}(\R^N)$ and almost everywhere in $\R^N$. Since $\|v_n\|_{L^\infty}\leq C$, \eqref{eq:decay-v} implies that
  \begin{equation}\label{eq:decay-vn}
    |v_n(x)|\leq C(1+|x|)^{2-N},
  \end{equation}
 for some $C$ independent of $n$. By pointwise convergence, we deduce that $v$ satisfies the same estimate. 
Define $g_n:= T(\gamma,v_n)$ i.e. $g_n\in D^{1,2}(\R^N)$ is the weak solution to
  \begin{equation}\label{eq:gn}
    -\Delta g_n-\frac \gamma{|x|^2}g_n= C_\gamma \frac {|v_n|^{p_s-2}v_n}{|x|^s},\  \ \text{ in }\R^N\setminus \{0\}.
    \end{equation}
  Multiplying the previous equation by $g_n$ and integrating, we find
  \[\int_{\R^N} |\nabla g_n|^2 -\int_{\R^N} \frac \gamma{|x|^2}g_n^2= C_\gamma
  \int_{\R^N}\frac {|v_n|^{p_s-2}v_ng_n}{|x|^s}.\]
  The Hardy and Sobolev inequality then implies 
  \[\norm{g_n}_{1,2}^2\leq C \norm{  \frac {|v_n|^{p_s-1}}{|x|^s}}_{\frac {2N}{N+2}}\norm{g_n}_{\frac {2N}{N-2}}\leq C \norm{  \frac {|v_n|^{p_s-1}}{|x|^s}}_{\frac {2N}{N+2}}\norm{g_n}_{1,2}.\]
  Then, up to a subsequence, $g_n\rightharpoonup g$  weakly in $D^{1,2}(\R^N)$ 
  and almost everywhere in $\R^N$. So we can pass to the limit in the weak formulation of \eqref{eq:gn} obtaining that $g$ is a weak solution to
  \begin{equation}\label{eq:g}
  -\Delta g-\frac \gamma{|x|^2}g=C_\gamma \frac {|v|^{p_s-2}v}{|x|^s}, \  \ \text{ in }\R^N\setminus\{0\}.\end{equation} 
  From \eqref{eq:decay-vn} and \eqref{eq:gn}, using Lemmas \ref{lem:appendix-3} and \ref{lem-decad-w}, we have $|g_n(x)|\leq C|x|^{\beta}(1+|x|)^{2-N-\beta}$, for some $C$ independent of $n$ and $\beta=\min\{2-s,-a_\gamma\}$. By pointwise convergence, the same estimate holds for $g$. These estimates allow us to get
  \begin{eqnarray}
    &&\int_{\R^N}|\nabla g_n|^2 \ dx-\gamma\int_{\R^N}\frac {g_n^2}{|x|^2}\ dx= C_\gamma \int_\R^N\frac {|v_n|^{p_s-2}v_ng_n}{|x|^s}\ dx\to\nonumber\\
    &&  \   C_\gamma
    \int_\R^N\frac {|v|^{p_s-2}vg}{|x|^s}\ dx=\int_{\R^N}|\nabla g|^2 \ dx-\gamma\int_{\R^N}\frac {g^2}{|x|^2}\ dx,\nonumber
  \end{eqnarray}
  where the last equality follows from \eqref{eq:g}. By Lemma 5.1 of \cite{DGG}, this implies that $g_n\to g$ strongly in $D^{1,2}(\R^N)$.
 
 To finish the proof, we need to show that $\nor g_n-g\nor_{\infty}<\e$ if $n$ is large enough. To this end, observe that $g_n-g\in D^{1,2}(\R^N)$ weakly solves
  $$-\Delta (g_n- g)-\frac{\gamma}{|x|^2}(g_n- g)= C_\gamma
  \frac{|v_n|^{p_s-2}v_n- |v|^{p_s-2}v}{|x|^s} \, \hbox{ in }\R^N .$$
 As previously, using Lemma \ref{lem:appendix-3} and \ref{lem-decad-w} (see in particular \eqref{eq:decay-w}), there exists $r_0,R_0>0$ such that  $| g_n(x)- g(x)|\leq \frac \e 2$ in $\{x\in \R^N: \ |x|< r_0 \ , |x|>R_0\}$ uniformly in $n$.  Finally, since $v_n$ is uniformly bounded in $B_{R_0}\setminus B_{r_0}$, $|v_n|^{p_s-2}v_n\to |v|^{p_s-2}v$ in $L^p(B_{R_0}\setminus B_{r_0})$ for any $p$. Thus, for any $\e>0$, we get that $\sup_{B_{R_0}\setminus B_{r_0}}|g_n(x)-g(x)|\leq \frac \e 2$ provided $n$ is large enough. 
 
To prove the continuity of $T$ with respect to $\gamma$, we let $\gamma_n\in (-\infty,0)$ be such that $\gamma_n\to \gamma$ and we set $g_n=T(\gamma_n,v)$ for a given $v\in X$. As before, the Hardy and the Sobolev inequality implies that $\|g_n\|_{1,2}\leq C$, so that, up to a subsequence, $g_n\to g$ weakly in $D^{1,2}(\R^N)$ where $g$ is a weak solution to \eqref{eq:g}, namely $g=T(\gamma,v)$. The convergence of $g_n$ to $g$ in $X$ then follows exactly as in the proof of the compactness. This concludes the proof. 

\end{proof}

\

Before proving our first general bifurcation result, we introduce some notation. We recall that $O(k)$ is the orthogonal group in $\R^k$.  We define the subgroups
$\mathcal G_h$ of $O(N)$ by
\begin{equation}\nonumber
\mathcal G_h=O(h)\times O(N-h)\quad \hbox{ for }1\leq h\leq \left[\frac N2\right]
\end{equation}
where $\left[ a\right]$ stands for the integer part of $a$, for $h=1,\dots,[\frac N 2]$ .
We denote by $X^h$ the subspace of $X$ of functions invariant by the
action of $\mathcal G_h$. We will also consider the subspace of $X$ given by the functions which are invariant by the action of the orthogonal group $O(N-1)$ in  $\R^{N-1}$, which acts on the first $(N-1)$-variables and that we denote hereafter by $X^{N-1}$.

\begin{theorem}\label{teo:bif}
Fix $j\in \N$ and let $\gamma_j$ be as in \eqref{eq:gamma-j}. Then\\
i) For any $j$, there exists at least a continuum of non-radial weak solutions to \eqref{problem}, invariant with respect to $O(N-1)$, bifurcating from $(\gamma_j, U_{\gamma_j})$ in $(-\infty,0]\times X$.\\
ii) If $j$ is even, there exist at least $\big[\frac N2\big]$ continua of non-radial weak solutions to
\eqref{problem} bifurcating from $(\gamma_j,U_{\gamma_j})$ in $(-\infty,0]\times X$. The first branch is $O(N-1)$ invariant, the second is $\mathcal G_2$ invariant and so on.\\
Moreover the solutions $v_{\gamma}$ along these continua are Kelvin invariant and satisfy
$$\sup_{x\in \R^N}(1+|x|)^{N-2}|v_{\gamma}(x)|\leq C_v.$$
The bifurcation is global and the Rabinowitz alternative holds.
\end{theorem}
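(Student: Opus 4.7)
I would apply Rabinowitz's global bifurcation theorem to the fixed-point equation $v = T(\gamma, v)$ in the symmetric subspaces $X^{N-1}$ (for part (i)) and $X^h$, $h=1,\dots,[\frac N2]$ (for part (ii)). By Lemma \ref{lem:comp}, $T$ is continuous in $\gamma$ and compact in $v$ on $X$, and Lemma \ref{uniqreadialX} provides the trivial branch $\{(\gamma, U_\gamma) : \gamma \le 0\}$ of radial fixed points. Since the weights $|x|^{-2}$, $|x|^{-s}$ and the Kelvin transform all commute with the $O(N)$ action, $T(\gamma,\cdot)$ is $O(N)$-equivariant, hence restricts to each symmetric subspace, and Lemma \ref{lem:T-well-def} applies unchanged.

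\textbf{Step 1 (index and odd crossing).} For $\gamma \neq \gamma_k$ the linearization $I - T_v(\gamma, U_\gamma)$ is invertible on $X$ by Lemma \ref{lem-nondegeneracy}, and the same holds on every symmetric subspace; hence $U_\gamma$ is an isolated fixed point with Leray--Schauder index $(-1)^{m^{\mathcal G}(\gamma)}$, where $m^{\mathcal G}$ is the symmetric Morse index of Corollary \ref{cor:G-morse-index-2}. As $\gamma$ crosses $\gamma_j$, this index jumps by $\dim Y_j^{\mathcal G}$. I would then verify the following counts: (a) $\dim Y_j^{O(N-1)} = 1$ for every $j$, since the unique (up to scalar) $O(N-1)$-invariant homogeneous harmonic of degree $j$ on $\R^N$ is the Gegenbauer-type polynomial $|x|^j C_j^{(N-2)/2}\!\bigl(x_N/|x|\bigr)$; and (b) $\dim Y_j^{\mathcal G_h} = 1$ for every even $j$ and every $h\in\{1,\ldots,[\frac N2]\}$, since a $\mathcal G_h$-invariant polynomial in $\R^h \times \R^{N-h}$ is a polynomial $P(|x'|^2, |x''|^2)$ of degree $j/2$ in two variables, and the harmonicity condition reduces to a single first-order linear differential operator on the $(j/2+1)$-dimensional space of such polynomials whose image is the $(j/2)$-dimensional space of polynomials of one degree less, giving a one-dimensional kernel. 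In each relevant class the index of $U_\gamma$ therefore flips sign at $\gamma_j$.

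\textbf{Step 2 (bifurcation, distinction, properties).} The sign change of the Leray--Schauder index triggers Rabinowitz's global bifurcation theorem, producing in $(-\infty, 0] \times X^{\mathcal G}$ a continuum of non-trivial fixed points of $T$ emanating from $(\gamma_j, U_{\gamma_j})$ that satisfies the classical alternative (either unbounded or meeting another point $(\gamma_h, U_{\gamma_h})$ on the trivial branch). Solutions along the branch are non-radial near $\gamma_j$ because, by Remark \ref{rmkdegen}, the direction in the kernel is a genuinely angular spherical harmonic; the $[\frac N2]$ branches for distinct $h$ are separated by their minimal isotropy groups. Kelvin invariance is automatic since the branches lie in $X \subset D^{1,2}_k$, and the pointwise decay follows at once: for $|x| \ge 1$,
\[
|v(x)| = |x|^{-(N-2)}\,|v(x/|x|^2)| \le \|v\|_\infty\, |x|^{-(N-2)},
\]
while the bound $|v(x)| \le \|v\|_\infty$ is trivial for $|x| \le 1$, which combine to give $\sup_x (1+|x|)^{N-2}|v(x)| \le C_v$.

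\textbf{Main obstacle.} The delicate point is Step 1(b): guaranteeing that the kernel in each symmetry class $\mathcal G_h$ is \emph{exactly} one-dimensional (or at least odd-dimensional) so that the Leray--Schauder index genuinely changes sign. This is precisely why part (ii) requires $j$ to be even---for odd $j$ all $\mathcal G_h$-invariant harmonics vanish and no bifurcation in $X^h$ is produced by this argument.
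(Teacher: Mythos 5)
Your proposal is correct and follows essentially the same route as the paper: reduce to the fixed-point equation $v = T(\gamma,v)$ in the symmetry subspaces $X^{N-1}$ and $X^h$, use the compactness of $T$ (Lemma \ref{lem:comp}), the invertibility of $I-T_v(\gamma,U_\gamma)$ away from the $\gamma_j$ (Lemma \ref{lem-nondegeneracy}), and the fact that in each such subspace exactly one spherical harmonic of degree $j$ survives, so the symmetric Morse index jumps by one at $\gamma_j$ (Corollary \ref{cor:G-morse-index-2}) and Rabinowitz's theorem applies. The one place you diverge is the count $\dim Y_j^{\mathcal G_h}=1$: the paper defers to \cite{SW} (and \cite{GGT2}), whereas you give a direct dimension argument in the variables $t=|x'|^2,\ u=|x''|^2$; this is sound, although the induced operator $4t\,\partial_t^2 + 2h\,\partial_t + 4u\,\partial_u^2 + 2(N-h)\,\partial_u$ is second order rather than ``first order'' as you wrote — what actually matters is that it lowers the bidegree by one and is surjective onto the $\mathcal G_h$-invariants (by $O(N)$-equivariance plus averaging of a preimage), so the kernel is indeed one-dimensional and the index flips sign.
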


\begin{remark}[Rabinowitz alternative]\label{rem-alternative}
Let us recall the Rabinowitz alternative. Denote by $\Sigma^\ell$ the closure of the set
  \[\{(\gamma,v)\in(-\infty,0)\times X^\ell,\ : \ T(\gamma,v)=v,\ v\neq U_\gamma\}\]
  with $\ell=1,\dots,[\frac N2 ]$ or $\ell=N-1$ and 
by $\mathcal C_j^\ell$ the closed connected component of $\Sigma^\ell$
that contains $(\gamma_j,U_{\gamma_j})$, namely 
  the continuum of solutions to \eqref{problem} bifurcating from $(\gamma_j, U_{\gamma_j})$. The Rabinowitz alternative states that one of the following occur:
\begin{itemize}
\item[$a)$] $\mathcal C_j^\ell$ is unbounded in $(-\infty,0)\times X^\ell$;
\item[$b)$]  $\mathcal C_j^\ell$ intersects $\{0\}\times X^\ell$;
\item[$c)$]  there exists $ \gamma_h$ with $h\neq j$ such that  $( \gamma_h, U_{\gamma_h}
)\in\mathcal C_j^\ell$.
  \end{itemize}
\end{remark}

\begin{proof}[Proof of Theorem \ref{teo:bif}]
  The bifurcation result is standard when we have a compact operator, continuous with respect to the bifurcation parameter $\gamma$ (by Lemma \ref{lem:comp}) which has only isolated degeneracy points (by Lemma \ref{lem-nondegeneracy}) and such that at a degeneracy point the Morse index along the curve of solutions $(\gamma,U_\gamma)$ has an odd change. It is also well-known that the bifurcation is global and the Rabinowitz alternative holds if there is an odd change in the Morse index .

  \

So we only have to prove that at a degeneracy point $\gamma _j$ the Morse index along the curve $(\gamma, U_\gamma)$ has an odd change. To prove $i)$, we are going to work in the space $X^{N-1}$. First, we observe that $T$ maps $X^{N-1}$ into $X^{N-1}$. By Lemma \ref{lem-nondegeneracy} and Remark \ref{rmkdegen}, bifurcation in $X$ can happen only at the values $(\gamma_j,U_{\gamma_j}
)$ where the solutions of the operator $I-T_v(\gamma_j,U_{\gamma_j})$ are $Z_{j,i}$ for $i=1,\dots, \frac{(N+2j-2)(N+j-3)!}{(N-2)!j!}$. Moreover, for any $j\in \N$, there exists only one spherical harmonic $O(N-1)$-invariant. So, in $X^{N-1}$, the operator $I-T_v(\gamma_j, U_{\gamma_j}
)$ has a unique solution which means, by Corollary \ref{cor:G-morse-index-2}, that the Morse index in the space $X^{N-1}$ increases by exactly one at every value $\gamma_j$. This proves $i)$.

The proof of $ii)$ is similar except we work in the spaces $X^h$. In this case, it has been proved in \cite{SW} (see also \cite{GGT2}) that there is only one spherical harmonic $\mathcal G_h$-invariant.
\end{proof}

\

\begin{remark}
For later purpose, we now precise the uniqueness of the spherical harmonic which is $O(N-1)$-invariant for any $k\in \N$. In fact, it is given explicitly by (see \cite{G})
\begin{equation}\label{eq:spherical-harmonics-N-1}
  Y_k(\theta)=P_k^{(\frac{N-3}2,\frac{N-3}2)}(\cos \theta)
  \end{equation}
  for $\theta\in [0,\pi]$, where $P_k^{(\frac{N-3}2,\frac{N-3}2)}$ are the Jacobi Polynomials that can be written, using the Rodrigues' formula 
\begin{equation}\label{eq:Jacoby}\nonumber
P_k^{(\frac{N-3}2,\frac{N-3}2)}(z)=\frac{(-1)^k}{2^k k!}(1-z^2)^{-\frac{N-3}2}\frac{\de ^k}{\de z^k}\left( (1-z^2)^{k+\frac{N-3}2}\right)
\end{equation}
for $z\in [-1,1]$ and any $k= 0,1,\dots$. In particular we have 
$$P_1^{(\frac{N-3}2,\frac{N-3}2)}(z)=\frac {N-1}2 z$$ and $$P_2^{(\frac{N-3}2,\frac{N-3}2)}(z)= \frac{N+1}{16}((N+1)z^2-2).$$
\end{remark}

\begin{lemma}[$L^\infty$ a priori bound in $\R^N\setminus B_{1}$.]\label{Lem:incomplet}
Let $\bar\gamma<\gamma_1$. There exists $B=B(\bar\gamma)>0$ such that if $v\in X$ is a nonnegative solution of 
\begin{equation}
\label{eqreg-lem}
-\Delta v-\frac{\G}{|x|^2}v= C_{\G} 
\frac{|v|^{p_s-2}v}{|x|^s} \ \text{ in }\R^N\setminus\{0\}
\end{equation}
for some $\G\in [\bar \gamma, \gamma_1]$, 
then $\|v\|_{L^\infty(\R^N\setminus B_{1})}\le B$.
\end{lemma}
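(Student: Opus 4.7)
The plan is to argue by contradiction using a blow-up analysis. I would suppose the conclusion fails, so there exist $\Gamma_n\in[\bar\gamma,\gamma_1]$ (with $\Gamma_n\to\Gamma_\infty$ up to extraction) and nonnegative solutions $v_n\in X$ of \eqref{eqreg-lem} (with $\Gamma=\Gamma_n$) such that $M_n:=\|v_n\|_{L^\infty(\R^N\setminus B_1)}\to\infty$. Since $v_n\in L^\infty(\R^N)$ is Kelvin invariant, the pointwise bound $v_n(x)\le\|v_n\|_{L^\infty(B_1)}|x|^{2-N}$ forces decay at infinity and $M_n=v_n(x_n)$ for some $|x_n|\ge 1$. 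Up to a subsequence either (i) $x_n\to x_\infty$ with $|x_\infty|<\infty$, or (ii) $|x_n|\to\infty$; in the latter case, Kelvin invariance yields $|y_n|^{N-2}v_n(y_n)=M_n$ with $y_n:=x_n/|x_n|^2\to 0$, so $v_n$ concentrates at the singular point~$0$.

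In the non-singular alternative (i), I would apply the Polacik--Quittner--Souplet doubling lemma to $v_n^{(p_s-2)/2}$ in a neighborhood of $x_\infty$ to produce new points $\tilde x_n\to x_\infty$ with $\tilde M_n:=v_n(\tilde x_n)\to\infty$ and the local uniform bound $v_n\le 2^{(N-2)/(2-s)}\tilde M_n$ on $B_{r_n\mu_n}(\tilde x_n)$, where $\mu_n:=\tilde M_n^{-(2-s)/(N-2)}\to 0$ and $r_n\to\infty$. Setting $\alpha:=(N-2)/(2-s)$ (so that $\alpha(p_s-2)=2$) and defining the conformal rescaling $w_n(z):=\mu_n^{\alpha}v_n(\tilde x_n+\mu_n z)$, a direct computation yields
\[-\Delta w_n=\frac{\mu_n^2\,\Gamma_n}{|\tilde x_n+\mu_n z|^2}\,w_n+\frac{C_{\Gamma_n}}{|\tilde x_n+\mu_n z|^s}\,w_n^{p_s-1},\quad w_n(0)=1,\ 0\le w_n\le C.\]
Since $\mu_n\to 0$ and $|\tilde x_n+\mu_n z|\to|x_\infty|\ge 1$ locally uniformly, the Hardy term vanishes in the limit while $|\tilde x_n+\mu_n z|^{-s}\to|x_\infty|^{-s}>0$. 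The uniform local bound together with the regularity established in Remark \ref{rem:regularity} then gives $C^1_{loc}$-convergence of $w_n$ (along a subsequence) to a nonnegative nontrivial classical solution $w_\infty$ of $-\Delta w_\infty=C_{\Gamma_\infty}|x_\infty|^{-s}w_\infty^{p_s-1}$ on $\R^N$ with $w_\infty(0)=1$. For $s>0$ the exponent $p_s-1=(N+2-2s)/(N-2)$ is strictly subcritical, so the Gidas--Spruck Liouville theorem forces $w_\infty\equiv 0$, a contradiction.

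The main obstacle is alternative (ii), namely concentration at the singular point of the Hardy potential, in which the rescaling centered at $y_n$ retains the term $-\gamma/|x|^2$ and the preceding Liouville argument fails. To handle it I would exploit the scaling invariance $v\mapsto\lambda^{(N-2)/2}v(\lambda\cdot)$ of \eqref{eqreg-lem}: the dilation $\lambda_n:=1/|y_n|\to\infty$ brings the concentration point $y_n$ onto $\partial B_1$, and one is reduced to the situation of (i), provided the subsequent doubling step does not drive the rescaled concentration point back towards~$0$. The hypothesis $\bar\gamma<\gamma_1$ enters exactly here: for any $\Gamma\in(\gamma_1,(N-2)^2/4)$, Proposition \ref{prop:existence} forces the solution to be the radial $U_\Gamma$, which is uniformly bounded on $\R^N\setminus B_1$ for $\Gamma$ in compact subsets, while the non\-degeneracy statement of Corollary \ref{cor:nondegeneracy} rules out degenerate rescaled limits near $\gamma_1$. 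Finally, the borderline critical case $s=0$ (for which $\gamma_1=0$ and $p_s-1$ is Sobolev-critical) requires replacing Gidas--Spruck by the Caffarelli--Gidas--Spruck bubble classification; the contradiction is then closed by a Brezis--Lieb splitting combined with the strict sign $\Gamma_n\le 0$ supplied by the hypothesis.
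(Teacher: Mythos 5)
Your overall strategy (blow-up by contradiction at a maximum point, followed by a Liouville theorem) is the same as the paper's, and your rescaling $w_n(z)=\mu_n^{\alpha}v_n(\tilde x_n+\mu_n z)$ with $\alpha=(N-2)/(2-s)$ is exactly the paper's $v_n(y)=\mu_n u_{\gamma_n}(\mu_n^{(p_s-2)/2}y+x_n)$ after relabeling. The treatment of case (i) (when $x_n\to x_\infty$ finite) via the vanishing of the Hardy term and Gidas--Spruck for the subcritical exponent $p_s-1$ matches the paper. However there are two substantive issues.

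First, the doubling lemma is unnecessary, and invoking it means you miss the key structural point the paper exploits: since $M_n$ is by definition the maximum of $v_n$ over $\R^N\setminus B_1$, Kelvin invariance immediately gives $v_n(x)\le 2^{N-2}M_n$ on the larger set $\{|x|\ge 1/2\}$. Because the rescaling is centered at $x_n$ with $|x_n|\ge 1$ and the blow-up radius $r_n=\tfrac12\mu_n^{-1}$ in the $z$-variable corresponds to a ball of radius $1/2$ around $x_n$ in the original variable, one obtains the uniform bound $w_n\le 2^{N-2}$ on all of $B_{r_n}(0)$ \emph{without any doubling argument}. This is the special observation that makes the proof go through cleanly.

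Second, and more seriously, your treatment of case (ii) ($|x_n|\to\infty$) does not work. You frame it as ``concentration at the singular point $0$'' via Kelvin and propose to dilate by $\lambda_n=1/|y_n|$ to ``bring the concentration point onto $\partial B_1$'', then appeal to Proposition \ref{prop:existence} (which applies only for $\gamma>\gamma_1$, not for $\Gamma\in[\bar\gamma,\gamma_1]$) and the nondegeneracy of Corollary \ref{cor:nondegeneracy}, neither of which connects to a rescaling limit. The dilation you describe actually sends $y_n\mapsto y_n|y_n|$, which has modulus $|y_n|^2\to 0$, so it pushes the point toward the singularity rather than onto $\partial B_1$. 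The paper instead handles case (ii) by the very same rescaling centered at $x_n$: since $|x_n|\to\infty$, both the Hardy coefficient $\mu_n^{p_s-2}\Gamma_n/|\mu_n^{(p_s-2)/2}y+x_n|^2$ and (for $s>0$) the nonlinear coefficient $C_{\Gamma_n}/|\mu_n^{(p_s-2)/2}y+x_n|^s$ tend to zero locally uniformly, so the limit $v^*$ is a bounded nonnegative entire harmonic function, and the Liouville theorem for harmonic functions is invoked to conclude. There is no need to relocate the blow-up point. Your remarks about the $s=0$ borderline (bubble classification, Br\'ezis--Lieb splitting) are speculative and do not close the argument as written.
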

\begin{proof}
Assume there exists a sequence $(u_{\gamma_n})_{n}$ of
nonnegative solutions of \eqref{eqreg-lem} such that $\gamma_n\in [\bar \gamma, \gamma_1]$ and 
 $$\|u_{\gamma_n}\|_{L^\infty(\R^N\setminus B_1)}\to \infty.$$

One can assume that $\gamma_n \to \gamma \in [\bar \gamma,\gamma_1)$.  Let $x_n$ be a point where $u_{\gamma_n}$ achieves its maximum in $\R^N\setminus B_{1}$. Define
  \begin{equation*}
    v_{n}(y) := \mu_n u_{\gamma_n}\bigl(\mu_n^{(p_{s}-2)/2} y + x_{n}\bigr),
    \quad
    \text{where}\quad \mu_n := 1 / \norm{u_{\gamma_n}}_{L^\infty(\R^N\setminus B_{1})} \to 0.
  \end{equation*}
  Note that $v_n(0) = \norm{v_{n}}_{L^\infty(\R^N\setminus B_{1})} = 1$. Observe also that the Kelvin invariance of $u_{\gamma_n}$ implies that 
  $$\max_{B_1\setminus B_{\frac12}\ } u_{\gamma_n}\le 2^{N-2}\max_{\R^N\setminus B_{1}} u_{\gamma_n}.$$
  The function $v_n$ satisfies
    \begin{equation*}
    -\Delta v_n-\frac{\gamma_n\mu_n^{p_{s}-2}}{|\mu_n^{(p_{s}-2)/2} y + x_{n}|^2}v_n= C_{\gamma_n} 
\frac{|v_n|^{p_s-2}v_n}{|\mu_n^{(p_{s}-2)/2} y + x_{n}|^s} 
    \quad
    \text{on } B_{r_n}, 
  \end{equation*}
  where $r_n=\frac12\mu_n^{1-p_s/2}$. 
  By elliptic regularity, $(v_n)_n$ is bounded in $W^{2,r}$ and
  $C^{1,\alpha}$, $0 < \alpha < 1$ on any compact set of $B_{r_n}$. Thus, up to a subsequence and a rotation of the domain, one
concludes that
  \begin{equation*}
    v_n \to v^* \quad \text{in } W^{2,r} \text{ and } C^{1,\alpha}
    \text{ on compact sets of }
    \R^N.
  \end{equation*}
Observe also that either we can assume $x_n\to x^*$ for some $x^*\in \overline{\R^N\setminus B_{1}}$ or $|x_n|\to \infty$. 
Therefore, one has $v^* \ge 0$, $v^*(0) = 1$, $v^*\in  L^\infty$ and
  $v^*$ satisfies either
  \begin{equation*}
    -\Delta v^* = \frac1{|x^*|^s}(v^*)^{p_s-1}\quad\text{in } \R^N
    \qquad\text{ or }\qquad
      -\Delta v^* = 0 \quad\text{in } \R^{N}.
  \end{equation*}
  Liouville theorems~\cite{Gidas-Spruck81, Chen-Li, YanYan-Li03} and the fact that bounded harmonic functions on $\R^N$ are trivial imply $v^* =
  0$ which contradicts $v^*(0) = 1$.
\end{proof}
Lemma \ref{Lem:incomplet} is an incomplete result since we would have like to deduce a uniform bound in $X$. We include it for completeness to show that if a branch does explode in $L^\infty$, the trouble comes from the behaviour at the origin. We believe that a $D^{1,2}$ uniform bound would be enough to control, along a branch, the behaviour of the solutions at the origin and therefore allow to prove its unboundedness in the $\gamma$ direction. 

\begin{proposition}\label{prop:unbounded-interval}
The continua $C_j^\ell$ are given by nontrivial solutions. Moreover,
case $b)$ of Remark \ref{rem-alternative} cannot hold.
\end{proposition}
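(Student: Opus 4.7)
The plan is a two-step argument by contradiction: a local implicit function theorem argument establishes nontriviality along the continuum, and a global connectedness argument combined with Proposition \ref{prop:existence} rules out case $b)$.

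For the first claim, suppose that some pair $(\gamma^*,U_{\gamma^*})$ belongs to $\mathcal{C}_j^\ell$ with $\gamma^* \notin \{\gamma_h\}_{h\ge 1}$. By definition of $\mathcal{C}_j^\ell$ as the closure of non-radial fixed points, there is a sequence $(\gamma_n,v_n)\to(\gamma^*,U_{\gamma^*})$ in $(-\infty,0]\times X^\ell$ with $T(\gamma_n,v_n)=v_n$ and $v_n\neq U_{\gamma_n}$. Lemma \ref{lem-nondegeneracy} gives that $I-T_v(\gamma^*,U_{\gamma^*})$ is invertible on $X$, and by the equivariance of $T$ this invertibility descends to the symmetry subspace $X^\ell$. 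Applying the implicit function theorem to $F(\gamma,v)=v-T(\gamma,v)$ at $(\gamma^*,U_{\gamma^*})$, the local zero set of $F$ is the unique $C^1$ curve $\gamma\mapsto(\gamma,U_\gamma)$, hence $v_n=U_{\gamma_n}$ for all large $n$, contradicting non-radiality. To rule out $v_n\to 0$ as an alternative limit, I would test $T(\gamma_n,v_n)=v_n$ against $v_n$: using $\gamma_n\le 0$ (so that the Hardy term is non-negative) together with the standard Sobolev inequality, one finds $\|v_n\|_{1,2}^2 \le C\|v_n\|_{1,2}^{p_s}$ with $C$ uniform in $n$, and $p_s>2$ bounds $\|v_n\|_{1,2}$ away from $0$.

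For case $b)$, suppose $(0,v^*)\in\mathcal{C}_j^\ell$. Since $\mathcal{C}_j^\ell$ is connected, its continuous projection onto the $\gamma$-axis is an interval of $(-\infty,0]$ containing both $\gamma_j$ and $0$, and in particular covering $(\gamma_1,0]$. I would then pick any $\bar\gamma\in(\gamma_1,0]\setminus\{\gamma_h\}_{h\ge 1}$ (possible because all $\gamma_h$ with $h\ge 1$ lie in $(-\infty,\gamma_1]$) and find some $(\bar\gamma,\bar v)\in\mathcal{C}_j^\ell$, necessarily with $\bar v\neq 0$ by the energy estimate. Proposition \ref{prop:existence} forces $\bar v=U_{\bar\gamma}$, but the first step rules out such a radial pair with $\bar\gamma\notin\{\gamma_h\}$, a contradiction.

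The main obstacle I anticipate is a borderline issue when $s=0$ and $j=1$: here $\gamma_1=0$, so the bifurcation base point $(\gamma_1,U_{\gamma_1})$ already lies in $\{0\}\times X^\ell$ and the intersection required for case $b)$ is trivial at the base point. The statement must therefore be read modulo this base point, so that the borderline case collapses into alternative $c)$. A secondary technical point to verify is that non-degeneracy in the full space $X$ descends to the symmetry subspace $X^\ell$; this is automatic from the $\mathcal{G}$-equivariance of $T$ and the spherical-harmonic decomposition of the degenerate modes identified in Remark \ref{rmkdegen}.
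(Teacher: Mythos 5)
Your argument is essentially the same as the paper's in substance, but you package it differently. For nontriviality, the paper tests the equation against $v_n$, uses the Hardy and Sobolev inequalities to derive
$1\le C_{\Gamma_n}\,S^{-\frac{N(2-s)}{2(N-2)}}\big(\tfrac{N-2}{2}\big)^s\big(\int|\nabla v_n|^2\big)^{\frac{2-s}{N-2}}$
and lets $\int|\nabla v_n|^2\to 0$ to force a contradiction; this is exactly your estimate $\|v_n\|_{1,2}^2\le C\|v_n\|_{1,2}^{p_s}$ (note $\tfrac{2-s}{N-2}=\tfrac{p_s-2}{2}$), and, as you implicitly assume, $C$ is uniform only because $\gamma_n$ stays in a compact set (if $(\bar\gamma,0)\in\mathcal C_j^\ell$ then $\gamma_n\to\bar\gamma$). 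For ruling out case $b)$, the paper invokes Proposition \ref{prop:existence} in a single sentence: since non-radial solutions do not exist for $\gamma\in(\gamma_1,\frac{(N-2)^2}{4})$, the continuum cannot overpass $\gamma_1\le 0$. Your argument makes explicit what the paper leaves implicit, namely that the only remaining candidate in $(\gamma_1,0]$, the radial pair $(\bar\gamma,U_{\bar\gamma})$ with $\bar\gamma\neq\gamma_h$, is excluded by the local uniqueness coming from Lemma \ref{lem-nondegeneracy}; your IFT step in the first paragraph is really doing this job rather than proving nontriviality, so the organization is slightly misleading, but the content is correct and, if anything, fills in the paper's compressed reasoning. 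Your observation about the borderline case $s=0$, $j=1$ (where $\gamma_1=0$ and the bifurcation base point itself sits on $\{0\}\times X^\ell$) is a valid point that the paper does not address explicitly; it does not break the result, since in that case the alternative must be read as excluding the base point, but it is worth noting.

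Two small cautions. First, in the first paragraph, \emph{nontrivial} means $v\neq 0$, not $v\neq U_\gamma$; the IFT argument addresses the latter and is better framed as preparation for the second paragraph. Second, your appeal to ``the standard Sobolev inequality'' should, for $s>0$, be the Hardy--Sobolev inequality \eqref{HS-ineq} (or, as in the paper, Hölder followed by Sobolev and Hardy); the conclusion is the same but the quoted tool is not quite right.
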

\begin{proof}
    First we show that the continua $C_j^\ell$ are given by nontrivial solutions. Assume by contradiction that there exists a sequence of functions $v_n\in C_j^\ell$ such that $v_n$ solves \eqref{problem} with $\gamma=\G_n <0$ and such that
    $\int_{\R^N}|\nabla v_n|^2 \to 0$ as $n\to \infty$. Using equation \eqref{problem} we have
    \[\int_{\R^N}|\nabla v_n|^2-\G_n\int_{\R^N}\frac{v_n^2}{|x|^2}=C_{\G_n} \int_{\R^N}\frac{v_n^{p_s}}{|x|^s}.\]
Recalling that $\G_n<0$ and using the Hardy inequality,
    \[{\int_{\R^N}\frac{v_n^2}{|x|^2}}\leq \left(\frac{N-2}2\right)^2 \int_{\R^N}|\nabla v_n|^2,\]
 we get
    \[\begin{split}
        1=&\G_n\frac{\int_{\R^N}\frac{v_n^2}{|x|^2}}{\int_{\R^N}|\nabla v_n|^2}+ C_{\G_n} 
        \frac{ \int_{\R^N}\frac{v_n^{p_s}}{|x|^s}}{\int_{\R^N}|\nabla v_n|^2}\leq  C_{\G_n}  \frac{\left(\int_{\R^N}\frac{v_n^2}{|x|^2}\right)^{\frac s2}\left(\int_{\R^N} v_n^{2^*}\right)^{\frac{2-s}2}      }{\int_{\R^N}|\nabla v_n|^2}\\
        \leq &  C_{\G_n}  S^{-\frac {N(2-s)}{2(N-2)}} \left(\frac{\int_{\R^N}\frac {v_n^2}{|x|^2} } { \int_{\R^N}|\nabla v_n|^2} \right)^{\frac s2} \left(  \int_{\R^N}|\nabla v_n|^2\right)^{\frac{2-s}{N-2}}\\
        &\leq C_{\G_n}  S^{-\frac {N(2-s)}{2(N-2)}}\left(\frac {N-2}2\right)^s\left(  \int_{\R^N}|\nabla v_n|^2\right)^{\frac{2-s}{N-2}}
        \end{split}
                \]
                where $S$ is the best Sobolev constant. As the l.h.s. converges to $0$ as $n\to \infty$, we obtain a contradiction.
    \\
Next, Proposition \ref{prop:existence} implies that the continuum $C_j$, which contains non-radial, nontrivial solutions, cannot overpass the value $\gamma_1\leq 0$ proving case $b)$ of Remark \ref{rem-alternative} cannot hold.

\end{proof}

\section{Separation of the first two branches}\label{se:4}
In this section we aim to separate two of the branches of non-radial solutions obtained in Theorem \ref{teo:bif}, case $i)$. To this end we need to identify some properties of the solutions along the continuum $ C_j^{N-1}$
 defined in Remark \ref{rem-alternative}, which are preserved and which are not satisfied by solutions along other continua. In all this section, we consider functions in $X^{N-1}$. Consider the spherical coordinates in $\R^N$, $(r,
\varphi_1,\dots,\varphi_{N-2},\theta)$ where $\varphi_1\in[0,2\pi]$,  $\varphi_i\in[0,\pi]$
$i=2,\dots,N-2$ and $\theta\in [0,\pi]$ with
\begin{equation}\label{eq:coordinates}\nonumber
\begin{array}{l}
x_1=r\sin \theta\sin \varphi_{N-2}\cdots   \sin \varphi_2\sin \varphi_1=r\sin \theta H_1(\varphi_1,\dots,\varphi_{N-2}) \\
x_2=r\sin \theta\sin \varphi_{N-2}  \cdots \sin \varphi_2\cos \varphi_1=r \sin \theta H_2(\varphi_1,\dots,\varphi_{N-2})\\
x_3=r\sin \theta\sin \varphi_{N-2}  \cdots \cos \varphi_2=r\sin \theta H_3(\varphi_1,\dots,\varphi_{N-2})\\
\qquad\cdots\\
x_{N-1}=r\sin \theta\cos \varphi_{N-2}=r \sin \theta H_{N-2}(\varphi_1,\dots,\varphi_{N-2})\\
x_N=r\cos \theta
\end{array}
\end{equation}
where $H_i(\varphi_1,\dots,\varphi_{N-2})=\frac {x_i}{r\sin \theta}=\frac {x_i}{\sqrt{x_1^2+\dots+x_{N-1}^2}}$. In these coordinates, any functions in $X^{N-1}$ depends only on $r=|x|$ and $\theta = \arccos\frac{x_N}{|x|}$. Moreover, if $v\in X^{N-1}$, we have that  
\begin{equation}\label{der1}
v(x_1,\ldots ,x_i, \ldots , x_{N-1},x_N)=v(x_1,\ldots ,- x_i, \ldots , x_{N-1},x_N),
\end{equation} 
for all $i=1,\ldots ,N-1$. Inspired by \cite{G}, we consider in $X^{N-1}$ the functions that are monotone with respect to the angle $\theta$ in a suitable interval. As we will see this angular monotonicity is preserved along continua of bifurcating solutions which will allow us to distinguish them from the others.

\

We now recall the definition of cones $\mathcal K^1_{\pm}$ and  $\mathcal{K}_{\pm}^2$
\begin{equation*}
\mathcal{K}_{\pm}^1=\left\{\begin{array}{l}v\in X^{N-1}, 
v(r,\theta) \hbox{ is non-decreasing (-increasing) in }\\ \theta \hbox{ for } (r, \theta)\in (0,\infty)\times[0,\pi]   
\end{array}\right\}
\end{equation*}
and

\begin{equation*}
\mathcal{K}_{\pm}^2=\left\{\begin{array}{l} v\in X^{N-1},  v(x',x_N)=v(x',-x_N) \hbox{ for }(x',x_N)\in \R^N\setminus\{0\},\\
v(r,\theta) \hbox{ is non-decreasing (-increasing) in } \theta \hbox{ for } (r, \theta)\in (0,\infty)\times[0,\frac \pi 2]
\end{array}\right\}.
\end{equation*}

Observe that functions in $\mathcal{K}^1_{\pm}$ are Foliated Schwarz symmetric, i.e. they are axially symmetric with respect to an axis passing through the origin and nonincreasing in the polar angle from this axis.

To deal with the case of the cone $\mathcal K^2_{\pm}$ we  denote by $X^{N-1}_{\even}$ the subspace of $X^{N-1}$ given by functions which are $even$ in $x_N$.

\

\begin{lemma}\label{lem-invariance-K} For any $\gamma\leq 0$ the operator
  $T(\gamma,-)$, defined in (\ref{def:T}), maps the cone $\mathcal K^i_{\pm}$ into
 $\mathcal K^i_{\pm}$, for $i=1,2$.
\end{lemma}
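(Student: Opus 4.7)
My plan is to split the verification into two pieces: first, the linear symmetries ($O(N-1)$-invariance and, for $i=2$, evenness in $x_N$), which follow from uniqueness; second, the angular monotonicity, which I would obtain from a reflection argument combined with the maximum principle for $-\Delta -\gamma/|x|^2$. The latter operator is coercive on $D^{1,2}$ since $\gamma\le 0$, which is the crucial ingredient.

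For the linear symmetries, the right-hand side $f(x):=C_\gamma |v(x)|^{p_s-2}v(x)/|x|^s$ inherits every invariance of $v$. Since the linear equation $-\Delta z-\gamma/|x|^2 z=f$ has a unique weak solution in $D^{1,2}(\R^N)$ by Lemma \ref{lem:appendix-1}, $g=T(\gamma,v)$ must be fixed by every transformation fixing $f$. This immediately places $g$ in $X^{N-1}$ and, in the case $\mathcal K^2_\pm$, forces $g(x',x_N)=g(x',-x_N)$.

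The core of the argument concerns the angular monotonicity in the case $\mathcal K^1_\pm$. Fix $r>0$ and $0\le\theta_1<\theta_2\le\pi$, pick representatives $x_j=r(\sin\theta_j,0,\dots,0,\cos\theta_j)$, and let $H$ be the hyperplane through the origin perpendicular to $x_2-x_1$ (it does pass through the origin since $|x_1|=|x_2|$). Let $\sigma_H$ be the reflection across $H$, oriented so that $x_1\in H^+$ and $\sigma_H(x_1)=x_2\in H^-$. A direct angular computation in the two-dimensional slice $(x_1,x_N)$ shows that $\sigma_H$ sends a signed angle $\alpha$ from $e_N$ to $2s-\alpha$ with $s:=(\theta_1+\theta_2)/2$, and combining this with the $O(N-1)$-invariance of $v$ one checks that the monotonicity of $v\in\mathcal K^1_+$ in $\theta$ on $[0,\pi]$ yields $v(y)\le v(\sigma_H y)$ for every $y\in H^+$. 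Setting $w:=g\circ\sigma_H-g$, the isometry $\sigma_H$ preserves $|y|$ and so $w$ solves
\[
-\Delta w-\frac{\gamma}{|y|^2}w=\frac{C_\gamma}{|y|^s}\bigl(v(\sigma_H y)^{p_s-1}-v(y)^{p_s-1}\bigr)\ge 0\quad\text{in }H^+,
\]
with $w=0$ on $\partial H^+=H$ and $w(y)\to 0$ as $|y|\to\infty$ by the decay of $g\in X$ (Lemma \ref{lem-decad-w}). Testing with $w^-\in D^{1,2}_0(H^+)$ gives $\int_{H^+}|\nabla w^-|^2-\gamma\int_{H^+}(w^-)^2/|y|^2\le 0$; both terms are nonnegative for $\gamma\le 0$, hence $w^-\equiv 0$, $w\ge 0$ in $H^+$, and evaluation at $y=x_1$ yields $g(x_1)\le g(x_2)$. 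The case $\mathcal K^1_-$ is obtained by reversing the inequality throughout.

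For $\mathcal K^2_\pm$ the same scheme works, now with $\theta_1<\theta_2\in[0,\pi/2]$, but a new difficulty arises: since $H$ cuts across the equatorial hyperplane $\{x_N=0\}$, the half-space $H^+$ contains a piece of the lower hemisphere (in particular the south pole), and in that piece the naive sign of $v(\sigma_H y)-v(y)$ may fail. The plan is to use the extra evenness of $v$ in $x_N$ to ``fold'' this region back: if $(\sigma_H y)_N<0$, replace $\sigma_H y$ by its image under $x_N\mapsto -x_N$, producing a map $\tilde\sigma_H$ that agrees with $\sigma_H$ on the function values of $v$ and $g$. A careful angular computation in the two-dimensional slice, based on the identity $|\cos\alpha|^2-|\cos(2s-\alpha)|^2=-\sin(2s)\sin(2\alpha-2s)$, then checks that the inequality $v(y)\le v(\tilde\sigma_H y)$ holds throughout $H^+$, so that the source of the equation for $w$ has the correct sign. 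The remainder of the maximum-principle argument is unchanged. I expect this bookkeeping of signs in the $\mathcal K^2$ case, coupled with a compatible combination of the spatial reflection and the $x_N$-evenness, to be the main technical obstacle, while the $\mathcal K^1$ case is a textbook rotating-plane computation.
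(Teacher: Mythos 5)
Your argument for $\mathcal K^1_{\pm}$ is correct but takes a genuinely different route from the paper. The paper differentiates the equation in the angular directions $\tilde\theta_i$, $i=1,\dots,N-1$, obtains a linear equation for $u_{\tilde\theta_i}$, tests it with $\eta_\e (u_{\tilde\theta_i})^-$ for a suitable cutoff $\eta_\e$ that excises the origin and infinity, and passes to the limit $\e\to 0$ using Hardy's inequality; this requires the regularity from Remark \ref{rem:regularity} but handles the singularity of $1/|x|^2$ rigorously. You instead apply a reflection argument across the hyperplane $H$ through the origin bisecting the two azimuthal angles, and use the weak maximum principle for $w = g\circ\sigma_H - g$ in $H^+$. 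This is a legitimate and arguably more transparent strategy. Your computation is right: for $y\in H^+$ one has $(\sigma_H y)_N\le y_N$, hence $\theta(\sigma_H y)\ge\theta(y)$ at the same radius, so the source term is nonnegative. One caveat: testing directly with $w^-$ is not entirely kosher because $0\in\partial H^+$, so the potential $1/|y|^2$ is singular on the boundary; the paper's cutoff machinery, or its analogue on $H^+$, is needed to make this rigorous. That is a gap in rigor but not in strategy.

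For $\mathcal K^2_{\pm}$, however, there is a genuine gap which your proposed ``folding'' does not close. If $y\in H^+$ has 2D angle $\alpha$ in the $(y_1,y_N)$-slice, with $s=(\theta_1+\theta_2)/2\in(0,\pi/2)$, then by your own identity
\[
y_N^2 - (\sigma_H y)_N^2 = -\rho^2\sin(2s)\sin(2\alpha-2s),
\]
which is negative precisely when $\alpha\in(s-\pi,\,s-\pi/2)$, a nonempty open subset of $H^+=\{\alpha\in(s-\pi,s)\}$. On that region the source term $v(\sigma_H y)^{p_s-1}-v(y)^{p_s-1}$ is strictly negative (since $v\in\mathcal K^2_+$ is non-increasing in $|y_N|$ at fixed $|y|$). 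Replacing $\sigma_H y$ by $\tau(\sigma_H y)$, $\tau(x',x_N)=(x',-x_N)$, changes nothing: both $v$ and $g$ are even in $x_N$, so $v\circ\tilde\sigma_H=v\circ\sigma_H$, $g\circ\tilde\sigma_H=g\circ\sigma_H$, hence $w$ and the sign of the right-hand side are unchanged. The actual fix is to shrink the domain to $\Omega:=H^+\cap\{y: y_1\sin s + y_N\cos s>0\}=\{\alpha\in(s-\pi/2,s)\}$, where the source is nonnegative, and to observe that on the new piece of $\partial\Omega$ (where the 2D projection of $y$ is parallel to the normal $n$ of $H$) the reflection $\sigma_H$ acts as $(y_1,y_N)\mapsto(-y_1,-y_N)$, under which $g\in X^{N-1}_\even$ is invariant; hence $w=0$ there too, and the maximum-principle argument goes through on $\Omega$. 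Without this additional boundary identity, the proof of the $i=2$ case is incomplete. The paper's angular-derivative approach avoids the issue entirely, since evenness in $x_N$ directly gives the Dirichlet condition $u_{\tilde\theta_i}=0$ on $\{x_N=0\}$ for the half-space argument.
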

\begin{proof}
We consider the case of $\mathcal K^1_{+}$ since the proof for $\mathcal K^1_{-}$ is exactly the same. So we let $h\in \mathcal K^1_{+}$ and $u=T(\gamma,h)$, i.e. 
\begin{equation}\label{u=Tgammah}
-\Delta u  - \dfrac{\gamma}{|x|^2} u  = C_\gamma \frac {|h|^{p_s-2}h}{|x|^s}
 \  \  \ \text{ in }\R^N\setminus\{0\}. 
\end{equation}

 By the monotonicity assumption on $h$, we know that $\frac{\de
 h}{\de \theta}\geq 0$ for almost every $(r,\theta)\in (0,\infty)\times [0,\pi]$ and, by Remark \ref{rem:regularity}, $u\in W^{2,q}(\R^N)\cap C^{1,\alpha}_{loc}(\R^N\setminus\{0\})$.
One cannot take directly the derivative with respect to $\theta$ in \eqref{u=Tgammah}
 because the Laplacian does not commute with $\partial_\theta$.
 To overcome this, we let 
 $$u_{\tilde{\theta}_i } = x_{N}\partial_{x_i} u - x_{i}\partial_{x_{N}} u \in { W^{1,q}_{loc}(\R^N) }\cap C^{0,\alpha}_{loc}(\R^N\setminus\{0\}),$$ for $i=1,\ldots ,N-1$. 
 Notice that $u_{\tilde{\theta}_i }$ corresponds to the angular derivative of $u$ in the direction $\tilde{\theta}_i$ where $\tilde{\theta}_i$ is the angle associated to the rotation $\tilde{\mathcal{R}}_i$ in the plane $(x_i ,x_{N})$ centered at $0\in \R^2$ preserving all the other components. 
Observe, since $h$ is $O(N-1)$-invariant, that if $h$ is non-decreasing in  $\theta$, then $h$ is non-decreasing in ${\tilde \theta_i}$ for all $i=1,\ldots , N-1$. This follows from the fact that the rotation corresponding to $\theta$ centred at $0\in \R^N$ is a finite composition of rotations  $\tilde{\mathcal{R}}_i $ (rotation corresponding to the angle $\tilde{\theta}_i $, $i=1,\ldots ,N-1$ centred at $0\in \R^N$) and rotations $\mathcal{R}_i$ (rotation corresponding to the angle $\varphi_i$, $i=1,\ldots ,N-1$ centred at $0\in \R^N$).
 The converse is also true namely if $h$ is non-decreasing in ${\tilde \theta_i}$, for all $i=1,\ldots ,N-1$ then $h$ is non-decreasing in  $\theta$. 
 
 Direct computations using the expression of $u_{\tilde{\theta}_i }$ show that $u_{\tilde{\theta}_i }$ weakly solves
\begin{equation}
\label{equtildetheta}
-\Delta u_{\tilde{\theta}_i }  - \dfrac{\gamma}{|x|^2} u_{\tilde{\theta}_i}  = \bar C_\gamma \frac {|h|^{p_s-2}}{|x|^s}h_{\tilde{\theta}_i}
 \  \  \ \text{ in }\R^N\setminus\{0\} 
\end{equation}
for $\bar C_\gamma= C_\gamma (p_s-1)$.
Since the operator 
$$-\Delta  - \dfrac{\gamma}{|x|^2}$$
satisfies a weak maximum principle (Lemma \ref{lem:appendix-2}), we will deduce that $u_{\tilde{\theta}_i }\ge 0$ from $h_{\tilde \theta_i}\geq 0$. One needs to argue carefully since the r.h.s. does not belong to the right Lebesgue space. 
 For every $0<\e<1$, we denote by $\eta_\e(x)\in C^{\infty}_0(\R^N\setminus\{0\})$ a radial cut-off function such that
$\eta_\e=1$ for $2\e<|x|<\frac 1 \e$, $\eta_\e=0$ for $|x|\in [0,\e)\cup [\frac 2\e,\infty)$, $|\nabla \eta_\e(x)|\leq \frac 2\e$ for $|x|\in (\e,2\e)$ and $ |\nabla \eta_\e(x)|\leq 2\e$ for $|x|\in (\frac 1\e,\frac 2 \e)$. 
Let $\Psi_\e=\eta_\e (u_{\tilde \theta_i} )^-$,
where $s^-$ denotes the negative part of $s$.  
Using \eqref{equtildetheta} and the facts that $h _{\tilde{\theta}_i}\geq 0$ and $(u_{\tilde \theta_i} )^-
\leq 0$, we find that
  \[
    \begin{split}
      \int _{\R^N\setminus\{0\}}|\nabla \Psi_\e|^2 &=\int_{\R^N\setminus\{0\}}\nabla u_{\tilde \theta_i} \cdot \nabla (\eta_\varepsilon^2 (u_{\tilde \theta_i} )^- )+\int_{\R^N\setminus\{0\}}((u_{\tilde \theta_i} )^-)^2|\nabla \eta_\e|^2
      \\
      &=\gamma \int_{\R^N\setminus\{0\}}\frac{u_{\tilde \theta_i}  (u_{\tilde \theta_i} )^-}{|x|^2}\eta_\e^2+ \bar C_\gamma \int _{\R^N\setminus\{0\}}\frac{|h|^{p_s-2}}{|x|^s} h_{\tilde \theta_i} (u_{\tilde \theta_i} )^-\eta_\e^2\\
      &\ \ \ +\int_{\R^N\setminus\{0\}}((u_{\tilde \theta_i} )^-)^2|\nabla \eta_\e|^2 \\
      &  \leq \int_{\R^N\setminus\{0\}}((u_{\tilde \theta_i} )^-)^2|\nabla \eta_\e|^2 + \gamma \int_{\R^N\setminus\{0\}}\frac{((u_{\tilde \theta_i} )^-)^2}{|x|^2}\eta_\e^2.
    \end{split}\]
    
     Since by definition $|u_{\tilde \theta_i} |\leq |x| |\nabla u|$ and $u\in D^{1,2}(\R^N)$, we have
\[ \begin{split}
& \int_{\R^N\setminus\{0\}}((u_{\tilde \theta_i} )^-)^2|\nabla \eta_\e|^2 \leq 4\e^2 \frac 4{\e^2}\int_{\e<|x|<2\e}|\nabla u |^2\\
&+\frac 4{\e^2} 4\e^2
  \int_{\e^{-1}<|x|<2\e^{-1}}|\nabla u|^2=16 \int_{D_\e}|\nabla u|^2  =
  16 \int _{\R^N\setminus\{0\}}  |\nabla u|^2\chi_{D_\e},
  \end{split}\]
  where $\chi_{D_\e}$ is the characteristic function of the set 
  $D_\e:=\{x\in \R^N : \e<|x|<2\e \cup \e^{-1}<|x|<2\e^{-1}\}$. We have 
  $0\leq |\nabla u|^2\chi_{D_\e}  \leq |\nabla u|^2\in L^1(\R^N)$ and $\chi_{D_\e}\to 0$ a.e. in $\R^N$. Then the Lebesgue theorem implies that 
  \[ \int _{\R^N\setminus\{0\}}  |\nabla u|^2\chi_{D_\e}  \to 0\]
so that 
  \[\limsup_{\e\to 0}\left(\int _{R^N\setminus\{0\}}|\nabla \Psi_\e|^2 - \gamma \int_{\R^N\setminus\{0\}}\frac{((u_{\tilde \theta_i} )^-)^2}{|x|^2}\eta_\e^2\right) =0.\]
Fatou's Lemma now implies 
$$\int_{\R^N\setminus\{0\}}\frac{((u_{\tilde \theta_i} )^-)^2}{|x|^2} = 0,$$
so that $(u_{\tilde \theta_i})^- =0$ a.e. in $\R^N\setminus\{0\}$. 
This holds for every $i=1,\dots, N-1$ proving that $u$ is non-decreasing in $\theta$ i.e. $u\in \mathcal K^1_+$.

Suppose now that $h\in \mathcal{K}^2_{+}$ so that $h(x,x_N)= h(x, -x_{N})$.  First, it is easy to check that $T$ maps $X^{N-1}_{\even}$ into itself since $v=u(x,x_{N} )-u(x,- x_{N} )$ satisfies
$-\Delta v - \dfrac{\gamma}{|x|^2} v=0$ and by uniqueness, we deduce that $v=0$.
Since, by Remark \ref{rem:regularity}, $u\in C^{1,\a}_{loc}(\R^N\setminus\{0\})$, we have that $u_{x_{N}}(x,0)=0$ so  $u_{\tilde \theta_i}(x)=0$ for every $x\in \R^N\setminus\{0\}$ such that $x_N=0 $ for $i=1,\dots,N-1$.  
We are now in the same situation as the previous case except instead of working on the whole $\R^{N}$, we work on $(\R^{N})^+= \{ x\in \R^N : x_{N} > 0 \}$ and Dirichlet boundary condition on the boundary $\{x_{N}=0\}$. 
It is indeed straightforward to see that $u_{\tilde\theta_i}=0$ on  $\partial (\R^{N})^+$. Using again the cut-off function $\eta_\e$ and reasoning as in the previous part of the proof we obtain again that 
$(u_{\tilde \theta_i})^-\equiv 0$ in $(\R^N)^+$ showing $u_\theta\geq 0$. The conclusion then follows as in the previous case.
\end{proof}

When $U_\gamma$
 is an isolated fixed point for the operator $T(\gamma,\cdot)$, restricted to the space $X^{N-1}$, we can consider its index relative to the cone $\mathcal K^1_{\pm}$, 
(see \cite{D83}), which we denote by 
$\mathrm{ind}_{\mathcal K^1_{\pm}}\left(T(\gamma,\cdot),U_\gamma\right)$. It can be easily computed  when  $U_\gamma$ is non-degenerate in $X^{N-1}$. Observe that, by \eqref{eq:spherical-harmonics-N-1} this is equivalent to require that $U_\gamma$
 is non-degenerate in $X$, namely $\gamma\neq \gamma_j$ for $j=1,2\dots$. When $U_\gamma$ is an isolated fixed point for the operator $T(\gamma,\cdot)$, restricted to the space $X^{N-1}_\even$, we can consider its index relative to the cone $\mathcal K^2_{\pm}$, which we denote by 
$\mathrm{ind}_{\mathcal K^2_{\pm}}\left(T(\gamma,\cdot),U_\gamma
\right)$. Again, by \eqref{eq:spherical-harmonics-N-1} this can be computed  when $\gamma\neq \gamma_j$ for $j$ even.
In this case the characterization in Proposition \ref{lemma-lin}, see also Lemma \ref{lem-nondegeneracy}, implies the following lemma.

\begin{lemma}\label{lemma:calcoloIndexCono} Let
  $\gamma\neq \gamma_j$ be as defined in \eqref{eq:gamma-j}, for $j=1,2\dots$, then 
\[
\mathrm{ind}_{\mathcal K^1_{\pm}}\left(T(\gamma,\cdot), U_\gamma\right)=\left\{
\begin{array}{ll}
0 & \text{ if }\gamma>\gamma_1\\
- 1& \text{ if }\gamma<\gamma_1
\end{array}\right.
\]
and
\[
\mathrm{ind}_{\mathcal K^2_{\pm}}\left(T(\gamma,\cdot), U_\gamma\right)=\left\{
\begin{array}{ll}
0 & \text{ if }\gamma>\gamma_2\\
- 1& \text{ if }\gamma<\gamma_2
\end{array}\right.
\]
\end{lemma}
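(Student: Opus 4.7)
I would invoke Dancer's formula from \cite{D83} for the fixed-point index of an isolated fixed point lying on the boundary of a wedge, combined with the spectral picture of the linearization $L := T_v(\gamma, U_\gamma)$ produced by Proposition~\ref{lemma-lin} and Lemma~\ref{lem-nondegeneracy}. Since $U_\gamma$ is radial, it is constant in $\theta$ and therefore lies in the linearity space $V := \mathcal K^i_\pm \cap (-\mathcal K^i_\pm)$, which consists of the radial functions of $X^{N-1}$ (respectively of $X^{N-1}_{\even}$ when $i=2$). Because $U_\gamma \in V$, the tangent cone to $\mathcal K^i_\pm$ at $U_\gamma$ is $\mathcal K^i_\pm$ itself, and the linearization $L$ leaves $\mathcal K^i_\pm$ invariant by the same angular-derivative and weak-maximum-principle argument that proves Lemma~\ref{lem-invariance-K}.

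Next I would pin down the spectrum of $L$ on $X^{N-1}$. Decomposing as in the proof of Proposition~\ref{lemma-lin}, every eigenfunction of $L$ factorizes as $\psi_j(r)\,P_j^{(\frac{N-3}{2},\frac{N-3}{2})}(\cos\theta)$, the Jacobi polynomial being the unique $O(N-1)$-invariant spherical harmonic of degree $j$. A short analysis of these Jacobi polynomials shows that the product is $\theta$-monotone on $[0,\pi]$ only for $j=0$ (constant, hence in $V$) and $j=1$ (linear in $\cos\theta$); for $j\ge 2$ the polynomial has zeros in the interior of $(0,\pi)$ and the resulting eigenfunction leaves $\mathcal K^1_\pm$. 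For $\mathcal K^2_\pm$ the parity requirement $v(x',-x_N)=v(x',x_N)$ restricts to even $j$, and monotonicity on $[0,\pi/2]$ isolates $j=0$ and $j=2$. Modulo the linearity, the strict cone $\mathcal K^1_\pm/V$ (respectively $\mathcal K^2_\pm/V$) therefore contains exactly one eigendirection of $L$, carrying the principal eigenvalue $\mu_j(\gamma)$ of the $j$-th angular reduced problem with $j=1$ (resp.\ $j=2$). By the one-dimensional reduction in Proposition~\ref{lemma-lin} together with the monotonicity of the weighted eigenvalues underlying Lemma~\ref{lem:first-radial-sing-eigen}, one has $\mu_j(\gamma_j)=1$, $\mu_j(\gamma)>1$ for $\gamma<\gamma_j$, and $\mu_j(\gamma)<1$ for $\gamma>\gamma_j$; the hypothesis $\gamma\neq\gamma_j$ combined with Lemma~\ref{lem-nondegeneracy} guarantees that $I-L$ is invertible on $X^{N-1}$ and hence that $U_\gamma$ is isolated in each wedge.

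I would finally plug this spectral data into Dancer's wedge formula. The radial sector always contributes the eigenvalue $p_s-1>1$ of $L$ with eigenfunction $U_\gamma\in V\subset \mathcal K^i_\pm$; this linearity-direction contribution pins the baseline wedge index to $0$ as long as no strict-cone eigendirection of $L$ exceeds $1$, i.e.\ as long as $\gamma>\gamma_j$. When $\gamma$ decreases across $\gamma_j$, the unique strict-cone eigenvalue $\mu_j(\gamma)$ crosses $1$ from below, yielding an odd change of the Morse index restricted to $\mathcal K^i_\pm/V$ and, by Dancer's formula, an additive jump of the wedge index by $-1$, so that $\mathrm{ind}_{\mathcal K^i_\pm}(T(\gamma,\cdot),U_\gamma)=-1$ for $\gamma<\gamma_j$. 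The indices for $\mathcal K^i_-$ equal those for $\mathcal K^i_+$ because the reflection $\theta\mapsto\pi-\theta$ (i.e.\ $x_N\mapsto -x_N$) is an isometry of $X^{N-1}$ commuting with $T$ and exchanging the two wedges.

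The main obstacle I anticipate is the clean application of Dancer's formula when the linearity space $V$ already carries an eigenvalue larger than $1$ (namely $p_s-1$, with eigenfunction $U_\gamma$ itself). This shifts the ``baseline'' wedge index from $\pm 1$ to $0$, with each new strict-cone mode crossing $1$ producing a definite contribution; identifying the sign of that contribution (here $-1$ rather than $+1$) requires careful tracking of the orientation of the eigenvector $\psi_j(r)P_j^{(\frac{N-3}{2},\frac{N-3}{2})}(\cos\theta)$ inside $\mathcal K^i_+$. I expect this to be the most delicate bookkeeping in the argument, but once recorded the rest is a direct verification using the spectral information from Section~\ref{sectionradial}.
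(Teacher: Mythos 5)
Your proposal follows the same general strategy as the paper---Dancer's wedge index theory from \cite{D83} applied to the linearization $T_v(\gamma,U_\gamma)$, with the spherical-harmonic decomposition of Proposition~\ref{lemma-lin} pinning down which angular modes live in the cone. The identification of the linearity space $V=\mathcal K^i_\pm\cap(-\mathcal K^i_\pm)$ as the radial sector, the observation that only $j=0,1$ Jacobi polynomials are $\theta$-monotone on $[0,\pi]$ (resp.\ $j=0,2$ on $[0,\pi/2]$ after imposing evenness), and the calculation $T_v U_\gamma=(p_s-1)U_\gamma$ are all correct and aligned with the paper's ingredients.

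The gap is in the final step, the ``baseline $0$ plus additive jump of $-1$'' reading of Dancer's theorem. That is not what Dancer's Theorem~1 says. Dancer's formula is an \emph{either/or}: if the linearization has the so-called property~$\alpha$ on the tangent wedge (existence of $t\in(0,1]$ and $w\in\overline W_y\setminus S_y$ with $(I-tL)w\in S_y$), then $\mathrm{ind}_W(T,y)=0$; if property~$\alpha$ fails, then $\mathrm{ind}_W(T,y)=(-1)^\sigma$ where $\sigma$ counts the eigenvalues of $L$ above~$1$ on the linear span. There is no additive accumulation of contributions, and in particular the eigenvalue $p_s-1>1$ with eigenfunction $U_\gamma\in V=S_y$ does not ``pin a baseline to $0$'': as $U_\gamma$ lies inside $S_y$ it is \emph{not} a witness for property~$\alpha$, so it enters the computation only through the factor $(-1)^\sigma$ of the second alternative. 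What actually needs to be checked is precisely the property-$\alpha$ dichotomy for each $\gamma$, and the paper does this by introducing the one-parameter family of eigenvalue problems
\[
-\Delta h-\frac{\gamma}{|x|^2}h-t\,(N-s)(N+2-2s)\nu_\gamma^2\,\frac{|x|^{\nu_\gamma(2-s)-2}}{\bigl(1+|x|^{(2-s)\nu_\gamma}\bigr)^2}\,h=0,\qquad t\in(0,1),
\]
and tracking the sign of the first eigenvalue $\mu_t$ restricted to $X_{\rad}^\perp$ (resp.\ $X_{\rad,\even}^\perp$): the positivity of $\mu_0$ from Lemma~\ref{lem-first-eigenv}, continuity and monotonicity of $\mu_t$ in $t$, and the fact that $\mu_1<0$ if and only if $\gamma<\gamma_1$ (resp.\ $\gamma<\gamma_2$) together decide in which regime property~$\alpha$ holds. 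Your sketch leaves this entire verification implicit, and the heuristic you substitute for it does not recover Dancer's statement. You already anticipated in your final paragraph that this step was the weak point; that is exactly where a missing argument remains, and it must be replaced by the property-$\alpha$ check via the $t$-homotopy family before the index values can be asserted.
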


\begin{proof}
We denote by $T_u$ the Fr\' echet derivative of $T$ with respect to $u$. First, observe that, since $\gamma\neq \gamma_j$, $I-T_u (\gamma,U_\gamma) : X^{N-1}\longrightarrow X^{N-1}$ is invertible and the same holds in $X^{N-1}_\even$. 
Next, we claim that when $\gamma>\gamma_1$ resp. when $\gamma> \gamma_2$, the equation
\begin{equation}\label{o3}
-\Delta h-\frac \gamma{|x|^2}h -t(N-s)(N+2-2s)\nu_\gamma^2 \frac{|x|^{\nu_\gamma (2-s)-2} }{(1+|x|^{(2-s)\nu_\gamma})^2}h=0 \ \text{ in }\R^N\setminus\{0\}
\end{equation}
 does not admit a nontrivial solution $h\in X^{N-1}$ resp. $h\in X^{N-1}_\even$ which is not radial for some $t\in(0,1)$. Notice that the claim is equivalent to the so-called property $\alpha$ (see Lemma $2-(a)$ of \cite{D83}). So, applying \cite{D83}[Theorem 1], we get
$$\mathrm{ind}_{\mathcal K^1_{\pm}}\left(T(\gamma,\cdot),U_\gamma\right)=\mathrm{ind}_{X^{N-1}}\left(T_v(\gamma,U_\gamma),0\right),$$
and
$$\mathrm{ind}_{\mathcal K^1_{\pm}}\left(T(\gamma,\cdot),U_\gamma\right)=\mathrm{ind}_{X_{\even}^{N-1}}\left(T_v(\gamma,U_\gamma ),0\right).$$
Using standard results on the index (see for instance \cite{AM}), we have
$$\mathrm{ind}_{X^{N-1}}\left(T_v(\gamma, U_\gamma),0\right)=(-1)^{m^{N-1}({\gamma})},$$
and
$$\mathrm{ind}_{X_{\even}^{N-1}}\left(T_v(\gamma, U_\gamma),0\right)=(-1)^{m_{\even}^{N-1}({\gamma})}, $$
where $m^{N-1}({\gamma})$ resp. $m_{\even}^{N-1}({\gamma})$ denotes the Morse index of $U_\gamma$ in $X^{N-1}$ resp. $X^{N-1}_{\even}$. Arguing in the same way as in Lemma \ref{lem:first-radial-sing-eigen}, one can prove that $m^{N-1}({\gamma})=m_{\even}^{N-1}({\gamma})=1 $. Therefore, to finish the proof, we only need to show that the claim holds true.

First, we notice that the existence of non radial solution to \eqref{o3} is equivalent to say that zero is an eigenvalue of the problem 
\[
  -\Delta h-\frac \gamma{|x|^2}h-t(N-s)(N+2-2s)\nu_\gamma^2 \frac{|x|^{\nu_\gamma (2-s)-2} }{(1+|x|^{(2-s)\nu_\gamma})^2}h
  =\mu h \ \text{ in }\R^N\setminus\{0\}
\]
with eigenfunction in
$ X_{\rad}^\perp$ resp. $X_{\rad,\even}^\perp$ for some $t\in(0,1)$. Here $ X_{\rad}^\perp$ resp. $X_{\rad,\even}^\perp$ is the orthogonal complement to $X_\rad$ in $X^{N-1}$ resp. in $X^{N-1}_{\even}$.
We denote by $\mu_t$ the smallest eigenvalue of this problem.  By the variational characterization of the eigenvalues, $\mu_t$ is decreasing in $t$. One can also check that $\mu_t$ is continuous with respect to $t$. Moreover, by Lemma \ref{lem-first-eigenv}, $\mu_0>0$ and, by definition, $\mu_1$ is the smallest eigenvalue in $ X_{\rad}^\perp$ resp. $X_{\rad,\even}^\perp$, of the linearized operator $L_\gamma$. So, by continuity and by the decay of $\mu_t$ with respect to $t$, we deduce that there exists $t\in (0,1)$ such that $\mu_t=0$ if and only if $\mu_1<0$. From Lemma \ref{lemma-lin} and \eqref{eq:spherical-harmonics-N-1}, we have that $\mu_1<0$ in $ X^{N-1}\setminus\ X _{\rad}$
if and only if $\gamma<\gamma_1$ and $\mu_1<0$ in $ X^{N-1}_\even\setminus\ X _{\rad}$
if and only if $\gamma<\gamma_2$. This concludes the proof.
\end{proof}

\

\begin{theorem}
\label{teo-bif-a} 
The points $(\gamma_j,U_{\gamma_j} )$, $j= 1,2$, are non-radial bifurcation points from the curve of radial solutions $(\gamma, U_\gamma)$ and
the bifurcating solutions belong to the cone $\mathcal K^j_{\pm}$. Moreover, the continuum $\mathcal{C}_j^{\pm}\subset\mathcal K^j_{\pm} $, $j=1,2$, that branches out
of $(\gamma_j,U_{\gamma_j} )$ is unbounded in $(-\infty,0]\times \mathcal K^j_{\pm}$
 and the continua $\mathcal{C}_1^+$, $\mathcal{C}_1^-$,  $\mathcal{C}_2^+$ and  $\mathcal{C}_2^-$  have empty mutual intersections.
\end{theorem}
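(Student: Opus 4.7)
The plan is to apply a global bifurcation theorem for compact maps on a cone, in the spirit of Dancer's extension of Krasnoselski--Rabinowitz theory. Lemmas \ref{lem:comp}, \ref{lem-invariance-K} and \ref{lemma:calcoloIndexCono} supply exactly the three inputs required: compactness of $T(\gamma,\cdot)$ and continuity in $\gamma$; invariance of the cone $\mathcal K^j_\pm$ under $T(\gamma,\cdot)$; and an odd jump (from $0$ to $-1$) of the fixed-point index of $U_\gamma$ relative to $\mathcal K^j_\pm$ as $\gamma$ crosses $\gamma_j$. These three facts together trigger local bifurcation of non-radial fixed points from $(\gamma_j, U_{\gamma_j})$ inside the cone; Lemma \ref{uniqreadialX} guarantees that this bifurcating branch consists of non-radial solutions away from the bifurcation point, since $U_\gamma$ is the unique radial fixed point of $T(\gamma,\cdot)$ in $X$.

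To globalize, I would invoke the Rabinowitz alternative in its cone version. Letting $\mathcal C_j^\pm$ denote the maximal closed connected component of non-radial fixed points in $(-\infty,0]\times\mathcal K^j_\pm$ containing $(\gamma_j,U_{\gamma_j})$, either $\mathcal C_j^\pm$ is unbounded in $(-\infty,0]\times\mathcal K^j_\pm$ or it returns to the radial branch at a second point $(\gamma^*,U_{\gamma^*})$ with $\gamma^*\ne\gamma_j$ at which the cone index of $U_{\gamma^*}$ changes. But Lemma \ref{lemma:calcoloIndexCono} shows that the cone index in $\mathcal K^j_\pm$ has precisely one jump, located at $\gamma=\gamma_j$; no such $\gamma^*$ can exist, so the first alternative must hold and $\mathcal C_j^\pm$ is unbounded. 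The reason only a single jump occurs is that, within the $O(N-1)$-invariant subspace, the Jacobi polynomial $P_k^{((N-3)/2,(N-3)/2)}(\cos\theta)$ is monotone on $[0,\pi]$ only for $k=1$, and, among even indices, monotone on $[0,\pi/2]$ only for $k=2$, so all other degeneracies $\gamma_k$ of $U_\gamma$ are invisible to the cone.

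For the pairwise empty intersections, I would argue directly from the defining monotonicity and symmetry properties of the cones. A function in $\mathcal C_1^+\cap\mathcal C_1^-$ would be simultaneously non-decreasing and non-increasing in $\theta$ on $[0,\pi]$, hence constant in $\theta$ and therefore radial, contradicting the non-radial character of $\mathcal C_j^\pm$. The same argument on $[0,\pi/2]$ rules out $\mathcal C_2^+\cap\mathcal C_2^-$. A function in $\mathcal C_1^\pm\cap\mathcal C_2^\pm$ would be monotone in $\theta$ on $[0,\pi]$ and at the same time symmetric about $\theta=\pi/2$ (from the $x_N$-evenness built into $\mathcal K^2_\pm$), which again forces $\theta$-independence and hence radial symmetry. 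The main obstacle in executing this plan is the justification of the cone bifurcation machinery: one needs Dancer's \emph{property $\alpha$} for the linear homotopy of the linearized problem, which the proof of Lemma \ref{lemma:calcoloIndexCono} already supplies via Proposition \ref{lemma-lin}, so in our setting this difficulty is effectively cleared upstream and Steps 1--2 reduce to a bookkeeping of the cone-index jumps.
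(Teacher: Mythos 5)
Your plan follows essentially the same route as the paper: local cone bifurcation driven by the odd jump of the cone index (Lemma \ref{lemma:calcoloIndexCono}), globalization via a Dancer/Rabinowitz alternative in the cone, and separation of the continua from the structure of the cones. The key lemmas you cite are exactly the ones the paper uses, and your heuristic for why only one index jump occurs (monotonicity of the Jacobi polynomial $P_k^{((N-3)/2,(N-3)/2)}(\cos\theta)$ only for $k=1$, resp.\ $k=2$ among even indices) is a sensible gloss on the rigorous content of Lemma \ref{lemma:calcoloIndexCono}.

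There is one genuine omission. When you state the global alternative as ``either $\mathcal C_j^\pm$ is unbounded in $(-\infty,0]\times\mathcal K^j_\pm$ or it returns to the radial branch at a second point,'' you drop the third possibility that the parameter range $(-\infty,0]$ has a boundary at $\gamma=0$: a bounded continuum could reach $\{0\}\times\mathcal K^j_\pm$ without ever touching the radial curve again, and the degree-theoretic contradiction you rely on does not directly exclude this. The paper takes care of it in Proposition \ref{prop:unbounded-interval}: first the continua consist of nontrivial solutions (a uniform lower $D^{1,2}$-bound), and then Proposition \ref{prop:existence} shows \eqref{problem} has only radial solutions for $\gamma\in(\gamma_1,\frac{(N-2)^2}{4})$, so a non-radial branch cannot reach $\gamma=0$. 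You need to invoke this, or an equivalent argument, to close the dichotomy. A second, smaller point: your disjointness argument says an element of $\mathcal C_1^+\cap\mathcal C_1^-$ ``would be constant in $\theta$, hence radial, contradicting the non-radial character of $\mathcal C_j^\pm$.'' But each $\mathcal C_j^\pm$ is a \emph{closed} continuum that contains the radial bifurcation point $(\gamma_j,U_{\gamma_j})$; being radial is not by itself a contradiction. What does work (and is what the paper's Step 3 gestures at) is that any intersection point must be radial, hence must lie on the trivial branch $(\gamma,U_\gamma)$, and the only points of the trivial branch that lie in the closure of the non-radial parts of $\mathcal C_j^\pm$ are the bifurcation points $\gamma_j$; since $\gamma_1\neq\gamma_2$ this separates the $j=1$ from the $j=2$ continua, and for $\mathcal C_1^+$ versus $\mathcal C_1^-$ one must understand ``disjoint'' as disjointness away from the common starting point. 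Tightening that sentence would complete the argument.
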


\begin{proof}
{\bf Step 1.}
{\sl Non-radial local bifurcation in $\mathcal K^j_{\pm}$.} 
We only consider the point $(\gamma_1 ,U_{\gamma_1})$ and prove the result in the cone $\mathcal K^1_{+}$,
  the other case being similar. By Lemma \ref{lemma:calcoloIndexCono}, we know that, for any $\delta>0$ small, 
\begin{equation}\label{indiceDiv-a}
\mathrm{ind}_{ \mathcal K^1_+}
\left(T(  \gamma_1-\delta ,\cdot), U_{\gamma_1-\delta}
\right)
\neq
\mathrm{ind}_{\mathcal K^1_+}
\left(T(\gamma_1+\delta ,\cdot),U_{\gamma_1+\delta}
\right).
\end{equation}
We  assume by contradiction that  $(\gamma_1,U_{\gamma_1 })$ is not a
bifurcation point in $(-\infty,0)\times \mathcal K^1_+$. Then we can find $\delta>0$ and a neighborhood $\mathcal O$ of  $\{(\gamma,U_\gamma ): \gamma\in ( \gamma_1-\delta, \gamma_1+\delta)\} $ in $(\gamma_1-\delta, \gamma_1+\delta)\times\mathcal K^1_+$ such that $v-T(\gamma,v)\neq 0$ for every $(\gamma,v)$ in $\mathcal O$ different from $(\gamma,U_\gamma)$. We can choose $\delta>0$ such that \eqref{indiceDiv-a} holds.
Let $\mathcal O_\gamma:=\{v\in \mathcal K^1_+\ : \ (\gamma,v)\in \mathcal O\}$. It follows that  there are no solutions to  $v-T(p,v)= 0$ on $\cup_{\gamma\in  (\gamma_1-\delta, \gamma_1+\delta) }\{\gamma\} \times \partial  \mathcal{O}_\gamma$ 
and there is only the radial solution $(\gamma,U_\gamma)$ in $\left(\{ \gamma_1-\delta\}\times \mathcal{O}_{  \gamma_1-\delta }\right)\cup  \left(\{ \gamma_1+\delta \}\times \mathcal{O}_{\gamma_1+\delta }\right)$. By the  homotopy invariance of the fixed point index in the cone, see \cite{D83}, we have that 
\[
\mathrm{ind}_{\mathcal K^1_+}\left(T(\gamma,\cdot),U_\gamma\right) \ \ \hbox{ is constant for } \  \gamma\in(  \gamma_1-\delta,  \gamma_1+\delta),
\]
which is in contradiction with \eqref{indiceDiv-a}.
This proves the local bifurcation.

\

We point out that the bifurcating solutions belong to $\mathcal K^1_+$ since $T$ maps the cone in itself (by Lemma  \ref{lem-invariance-K}) and are non-radial for $\gamma$ close to $\gamma_1$
since
$U_\gamma$ is radially non-degenerate in $X$.

\

{\bf Step 2.}
{\sl Global bifurcation and Rabinowitz alternative.}
As for the Step 1, we only consider the case $\gamma=\gamma_1$ and $\mathcal K^1_+$. Following  \cite{G10}, we let 
$$\mathcal S\ : =\left\{(\gamma,U_\gamma): \gamma\in (-\infty,0)\right\} \subseteq (-\infty,0)\times \mathcal K^1_+$$
  be the curve of radial solutions, $\Sigma_1^+$ be the closure of the set
$$  \{(\gamma,v)\in  \left((-\infty,0)\times \mathcal K^1_+\setminus \mathcal S\right) : v \text{ solves }v-T(\gamma,v)=0\},$$ 
and $\mathcal C_1^+$ be the closed connected component of $\Sigma_1^+$ that contains $( \gamma_1 ,  U_{\gamma_1})$ (which is nonempty by Step 1).
Assume by contradiction that
the Rabinowitz alternative, namely one of the following, does not occur:
\begin{itemize}
\item[$i)$] $\mathcal C_1^+$ is unbounded in $(-\infty,0)\times \mathcal K^1_+$;
\item[$ii)$]  $\mathcal C_1^+$ intersects $\{0\}\times \mathcal K^1_+$;
\item[$iii)$]  there exists $\gamma_{k}$ with $k\neq 1$ such that  $(\gamma_{k} ,U_{\gamma_{k}}
)\in \mathcal C_1^+\cap \mathcal S$.
  \end{itemize}
We have already observed in Proposition \ref{prop:unbounded-interval} that $ii)$ cannot hold. 
Then, as in Step 2 in the proof of \cite[Theorem 3.3]{G10}, we can construct a suitable neighborhood $\mathcal O$  of $\mathcal C_1^+$ in $\mathcal K^1_+$ such that $\partial \mathcal O\cap \Sigma_1^+=\emptyset$, $\mathcal O\cap \mathcal S\subset (\gamma_1-\delta, \gamma_1+\delta)\times  \mathcal K^1_+$, for $\delta$  such that $0<\gamma_1-\delta< \gamma_1+\delta<\gamma_2$. Moreover, there exists $c_0>0$ such that  $\| v-U_\gamma\|_{ X}\geq c_0$ for $(\gamma,v)\in \mathcal O$ such that $|\gamma-\gamma_1|\geq \delta$. Then we can follow the proof of Step 3 and Step 4 in \cite[Theorem 3.3]{G10}, recalling now that, for $\Lambda_c:=\{(\gamma,v)\in (-\infty,0)\times X^{N-1} :  \| v-U_\gamma\|_{X}< c\}
$
one has
\[\mathrm{deg}_{\mathcal K^1_+}(I-T(\gamma_1\pm \delta,\cdot), \left(\mathcal O\cap \Lambda_c\right)_{\gamma_1\pm \delta},0)=\mathrm{ind}_{\mathcal K^1_+}(T(\gamma_1\pm \delta,\cdot),u_{\gamma_1\pm \delta})\]
for any $c<c_0$.
The fixed point index relative to the cone $\mathcal K_+^1$ can then be computed in $\gamma_1\pm \delta$  and it assumes either the value $0$ or $-1$ (by Lemma \ref{lemma:calcoloIndexCono}). The proof of Step 3 and 4 of \cite[Theorem 3.3]{G10} can be repeated and so we get a contradiction.
\\
We can also adapt the proof of \cite[Proposition 2.3]{G}, again using the degree in the cone $\mathcal K_+^1$ which is, as already observed,  either $0$ or $-1$ in a neighborhood of the isolated (in $X^{N-1}$) solution $U_\gamma$. 
The main difference is that, in the final part of the proof of  \cite[Proposition 2.3]{G} we now obtain, following the notations of \cite{G}, that
\[\mathrm{deg}_{\mathcal K_+^1}(S_r(\gamma,v), \mathcal O\cap B_r(\gamma_1,U_{\gamma_1}
),0)= -1 .\]
Following the rest of the proof of \cite[Proposition 2.3]{G} we get that, whenever $\mathcal C_1^+$ is bounded, the number of degeneracy points $\gamma_k$, that belong to $\mathcal C_1^+$ and
at which the index of the operator $T(\gamma,\cdot)$ in the cone $\mathcal K_+^1$ changes, has to be even. Lemma \ref{lemma:calcoloIndexCono} then implies that the value $\gamma_1$ is the unique at which a change in the index $\mathrm{ind}_{\mathcal K_+^1}\left(T(\gamma,\cdot),U_\gamma\right)$ appears, meaning that $\mathcal C_1^+$ cannot be bounded.  
 
 \
 
{\bf Step 3.}
{\sl Conclusion.} 
As already pointed out, the bifurcating solutions along the continuum $\mathcal C_j^{\pm}$ are not radial for $\gamma$ close to $\gamma_j$, and belong to the cone $\mathcal K_{\pm}^j$. Since $\mathcal K^1_+\cap \mathcal K^1_-=X_\rad$, $\mathcal K^1_{\pm} \cap \mathcal K^2_{\pm}=X_\rad$ and $U_{\gamma_j}$
 does not possess any radial degeneracy point different from $\gamma_0=\frac{(N-2)^2}4$, the continua $\mathcal{C}_1^+$, $\mathcal{C}_1^-$,  $\mathcal{C}_2^+$ and  $\mathcal{C}_2^-$  have empty intersections.
\end{proof}

\

\begin{remark}
  Theorem \ref{teo-bif-a} proves that at the points $\gamma_i,\ i=1,2$, we have two continua of bifurcating solutions. One belongs to $\mathcal K^i_+$ and the other one is in  $\mathcal K^i_-$ for $i=1,2$. 
 Solutions in $\mathcal{C}_1^{\pm}$ exhibit a one peak profile and have their maximum point either in the 
  north pole of $S^{N-1}$ or in the south pole.  
Solutions in $\mathcal{C}_2^-$ have their maximum point in the north pole of $S^{N-1}$ and, for symmetry reasons, exhibit then a two peaks profile.
  Finally, solutions in $\mathcal{C}_2^+$ have their maximum points all along the equator of $S^{N-1}$ and have therefore a manifold, $S^{N-2}$, of maxima. 
  \end{remark}

As observed in the proof of Theorem \ref{teo-bif-a}, each time we have an odd change in the Morse index of $U_\gamma$ at $\gamma_j$, we have a bifurcation phenomenon. Of course many different symmetries can be considered. Some of them have been exploited in \cite{AG18-bif} to prove a bifurcation result from a sign-changing solutions. Due to the expression of the eigenfunctions in \eqref{decomposition-eigenfunctions}, all the symmetries that one can consider are the ones of the spherical harmonics $Y_k(\theta)$, see also Corollary \ref{cor:G-morse-index-2} for the exact computation of the change of the Morse index at a value $\gamma_j$. 

  \section{Minimization in symmetry classes}\label{se:6}
      In this section, we find solutions to \eqref{problem} with the same symmetries of the ones in Section \ref{se:4} minimizing a suitable functional associated with equation \eqref{problem}. 
We include this approach to support the conjecture that the corresponding branches of solutions obtained in Theorem \ref{teo-bif-a} exists for all $\gamma<\gamma_1$, resp. $\gamma<\gamma_2$.
      To this end we define the functional $F: D^{1,2}(\R^N)\to \R$ 
         \begin{equation}\label{eq:functional-F}
        F(u):=\frac 12 \int_{\R^N}|\nabla u|^2-\frac \gamma 2\int_{\R^N}\frac{ u^2}{|x|^2}-\frac {C_\gamma}{p_s} \int_{\R^N}\frac{ |u|^{p_s}}{|x|^s}
          \end{equation}
          which is of class $C^1$ and we recall that if $u$ is a critical point of $F$, then $u$ is a weak solution to \eqref{problem}.\\
          The functional $F$ is not bounded by below in $D^{1,2}$ and so, using a classical procedure, we restrict it on the Nehari set
           \[\mathcal N:=
            \{u\in D^{1,2}(R^N) \ u\neq 0:  \int_{\R^N}|\nabla u|^2-\gamma\int_{\R^N}\frac{ u^2}{|x|^2}- C_\gamma \int_{\R^N}\frac{ |u|^{p_s}}{|x|^s}  =0\} \]
         Now define
          \begin{equation}\label{eq:d}
      d_\gamma:=\inf_{u\in \mathcal N}F(u).
            \end{equation}
          By classical methods we prove the following result.
          \begin{theorem}
            \label{teo-d-attained}
            The infimum $d_\gamma$ on $\mathcal N$ is nonnegative, every minimizer $u_\gamma\in \mathcal N$ is a critical point of $F$, it is positive and weakly solves \eqref{problem}. Moreover every minimizer $u_\gamma$ is $O(N-1)$-invariant (in a suitable coordinate system) and foliated Schwarz symmetric. Further, when $\gamma<\gamma_1$, $u_\gamma$ is non-radial and belongs to $\mathcal K^1_{\pm}$.
          \end{theorem}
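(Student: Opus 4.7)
The proof follows the classical Nehari minimization scheme. On $\mathcal N$, one has the identity
\begin{equation*}
F(u)=\Big(\tfrac12-\tfrac1{p_s}\Big)\int_{\R^N}\Big(|\nabla u|^2-\gamma\frac{u^2}{|x|^2}\Big)\,dx,
\end{equation*}
and by Hardy's inequality combined with $\gamma<(N-2)^2/4$ the bracketed form is an equivalent norm on $D^{1,2}(\R^N)$, so $F\ge 0$ on $\mathcal N$ and $d_\gamma\ge 0$. Applying the Hardy--Sobolev inequality \eqref{HS-ineq} to the Nehari constraint gives $\|u\|_{1,2}\ge\delta>0$ for all $u\in\mathcal N$, hence $d_\gamma>0$. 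A direct computation moreover shows that $d_\gamma$ is, up to an explicit positive factor, a power of the reciprocal of the best Hardy--Sobolev constant $C_{HS}$; thus minimizers of $F|_{\mathcal N}$ coincide with extremals of \eqref{HS-ineq}.

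To produce a minimizer I would take a minimizing sequence $(u_n)\subset\mathcal N$ and replace each $u_n$ by its foliated Schwarz symmetrization $u_n^{\star}$ with respect to some fixed axis. Because the weights $|x|^{-2}$ and $|x|^{-s}$ are radial, the integrals $\int u^2/|x|^2$ and $\int|u|^{p_s}/|x|^s$ are preserved, while a P\'olya--Szeg\H{o} type inequality gives $\int|\nabla u_n^{\star}|^2\le\int|\nabla u_n|^2$. A single scalar rescaling restores the Nehari constraint without increasing $F$. The symmetrized sequence is bounded in $D^{1,2}(\R^N)$; fixing the scale by imposing Kelvin invariance (which kills the conformal invariance, as in Section~\ref{se:3}) restores compactness in $L^{p_s}(\R^N,|x|^{-s}dx)$, and a weak limit $u_\gamma\in\mathcal N$ achieving $d_\gamma$ can be extracted.

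Since $G'(u)\cdot u=(2-p_s)C_\gamma\int|u|^{p_s}/|x|^s\ne 0$ on $\mathcal N$, any Lagrange multiplier must vanish and $u_\gamma$ is a genuine critical point of $F$, hence a weak solution of \eqref{problem}. The substitution $u\mapsto|u|$ preserves $F$ and $\mathcal N$, so one may take $u_\gamma\ge 0$, and the strong maximum principle (Lemma~\ref{lem:appendix-2}) upgrades this to $u_\gamma>0$ on $\R^N\setminus\{0\}$. If $u_\gamma$ failed to coincide with its foliated Schwarz rearrangement about some axis, the P\'olya--Szeg\H{o} inequality would be strict and contradict minimality, so $u_\gamma$ is axially symmetric and monotone in the polar angle in a suitably rotated coordinate system.

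For the non-radiality when $\gamma<\gamma_1$, note that by Corollary~\ref{cor-def-U} every radial element of $\mathcal N$ is a rescaling of $U_\gamma$, on which $F$ is constant, so $\inf_{\mathcal N_{\rad}}F=F(U_\gamma)$. Proposition~\ref{prop:Morse-index} gives $m(\gamma)\ge 1+N$ when $\gamma<\gamma_1$, and the $N$ non-radial negative directions $Z_{1,i}$ are tangent to $\mathcal N$ at $U_\gamma$: indeed, $U_\gamma$ is radial and the $Y_{1,i}$ are odd harmonics, so the angular integrals in $G'(U_\gamma)\cdot Z_{1,i}$ vanish. A second order expansion of $F$ along $U_\gamma+tZ_{1,i}$, projected back onto $\mathcal N$, produces $u\in\mathcal N$ with $F(u)<F(U_\gamma)$, hence $d_\gamma<F(U_\gamma)$ and no minimizer can be radial. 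Combined with the axial symmetry and the polar monotonicity, this forces $u_\gamma\in\mathcal K^1_+\cup\mathcal K^1_-$. The main obstacle is the existence/compactness step: the conformal invariance of \eqref{problem} allows minimizing sequences to concentrate at $0$ or vanish at infinity, and symmetrization alone is insufficient; the key device is to impose Kelvin invariance on the symmetrized sequence, which normalizes the scale and restores the compact embedding needed to extract a nontrivial limit.
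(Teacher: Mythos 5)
Your proposal is correct in outline but follows a genuinely different route from the paper for the symmetry statement. The paper deduces foliated Schwarz symmetry from Morse-index information via the Gladiali--Pacella--Weth machinery (\cite{GPW}): since $u_\gamma$ minimizes $F$ on the codimension-one Nehari manifold, its Morse index is $1$, which (Proposition 2.10 of \cite{GPW}) yields a direction $e\in S^{N-1}$ such that $Q(\psi,\psi)\ge 0$ for all $\psi\in C_0^1(S(e))$, and then a comparison of $u$ with its reflection $u\circ\sigma_e$ together with Propositions 2.5/2.8 of \cite{GPW} forces axial symmetry and polar monotonicity. You instead invoke the foliated Schwarz (cap) rearrangement together with a P\'olya--Szeg\H{o}-type inequality for radially weighted measures, reduce to the equality case, and conclude the minimizer equals its rearrangement. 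Both routes are legitimate, but note what the paper's choice buys: it avoids the delicate rigidity step in the equality case of the cap-symmetrization P\'olya--Szeg\H{o} inequality (which generally requires a separate argument, often using the equation and the maximum principle, to pass from ``the Dirichlet norm is unchanged'' to ``$u$ coincides with its rearrangement''), and it applies directly to any minimizer without needing to symmetrize the minimizing sequence along the way. For the non-radiality when $\gamma<\gamma_1$, your second-order expansion along $Z_{1,i}$ (correctly shown tangent to $\mathcal N$ by the vanishing angular mean of $Y_{1,i}$) does establish $d_\gamma<F(U_\gamma)$, but the paper's argument is more direct: the minimizer has Morse index $1$ while Proposition~\ref{prop:Morse-index} gives $m(\gamma)\ge N+1$ for $\gamma<\gamma_1$, so $u_\gamma\ne U_\gamma$ immediately. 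Finally, on existence and compactness: the paper simply declares this ``very classical'' and omits it, so your somewhat heuristic remarks about Kelvin normalization restoring compactness (which are plausible but not proved) are not out of step with the paper's own level of detail here.
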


\begin{proof}
 The existence part of the theorem and the positivity of minimizers are very classical and we omit it. Now, we focus on the symmetry of  $ u_\gamma$. Every minimizer $u_\gamma$ is a minimum of $F(u)$ on the Nehari manifold $\mathcal N$ which has codimension one and so the Morse index of $u_\gamma$ is $1$. Then, as in Proposition 2.10 of \cite{GPW} there exists a direction $e\in S^{N-1}$ such that, denoting by $S(e):=\{x\in \R^N: x\cdot e>0\}$, it holds
     \[\inf_{\psi\in C^1_0(S(e))}Q(\psi,\psi)      \geq 0\]
   where $$Q(\psi,\phi)=\int_{\R^N} (\nabla \psi \nabla \phi - \gamma \phi \psi - C_\gamma (p_s -1) |u_\gamma |^{p_s-2}\phi \psi ) dx .$$ 
We denote by $\sigma_e$ the reflection with respect to the hyperplane $x\cdot e=0$. Then we consider two cases depending if $u(x)-u(\sigma_e(x))=0$ or not. If  $u(x)-u(\sigma_e(x))\equiv 0$ in $S(e)$, then the foliated Schwarz symmetry of $u_\gamma$ follows by Proposition 2.5 of \cite{GPW}.  In the other case, the proof of Theorem 1.4 in \cite{GPW} gives that $u(x)-u(\sigma_e(x))$ does not change sign in $S(e)$. The foliated Schwarz symmetry of $u$ then follows by Proposition 2.8 of \cite{GPW}. 
By definition of foliated Schwarz symmetry, we get that $u_\gamma$ depends only on $r=|x|$ and on one angle $\theta=\arccos  (\frac {x}{|x|}\cdot e)$, with $e\in S^{N-1}$ in which it is monotone. This proves that $u_\gamma$ is $O(N-1)$-invariant and, up to rotation, belongs to $ \mathcal {K}^1_{\pm}$.  
  Finally, we prove that every minimizer $u_\gamma$ is non-radial for $\gamma<\gamma_1$. 
Following the proof of \cite{GPW}, we have that either $ u_\gamma$ is radial or, up to a change of coordinates, it is strictly decreasing with respect to $\theta$ and therefore belongs to the interior of the cone $\mathcal K_1$. The fact that $u_\gamma$ is not radial when $\gamma<\gamma_1$ follows by Morse index considerations. As already observe the Morse index in $D^{1,2}(\R^N)$ of every minimizers $u_\gamma$ is one. But when $\gamma<\gamma_1$, the Morse index of the radial solution $U_\gamma$ is greater or equal than $N+1$ from \eqref{eq:morse-index}.  Then $u_{\gamma}\neq U_{\gamma}$ and the proof is completed.
         \end{proof}

         \

         Next we turn to the case of the cone $\mathcal K_2$ considered in Section \ref{se:4} and we restrict the functional $F$ in \eqref{eq:functional-F} to the space $D^{1,2}_{N-1, \even}$, given by $D^{1,2}$ functions which are $O(N-1)$-invariant with respect $(x_1, \dots,x_{N-1})$ and even in $x_N$.
         We denote by
             \[\mathcal N^{\even}:=
               \{u\in D^{1,2}_{N-1, \even} \ u\neq 0:  \int_{\R^N}|\nabla u|^2-\gamma\int_{\R^N}\frac{ u^2}{|x|^2}- C_\gamma\int_{\R^N}\frac{ |u|^{p_S}}{|x|^s}  =0\} \]
             and 
   \begin{equation}\label{eq:d-even}
             d_\gamma^{\even}:=\inf_{u\in \mathcal N^{\even}}F(u).
            \end{equation}

     \begin{theorem}
            \label{teo-d-even-attained}
            The infimum $ d_\gamma^{\even}$ on $\mathcal N^{\even}$ is nonnegative, every minimizer $u_{\gamma}^{\even}\in \mathcal N^{\even}$ is a critical point of $F$, it is positive and weakly solves \eqref{problem}. Moreover, when $\gamma<\gamma_1$ $u_{\gamma}^\even \neq u_{\gamma}$ and every minimizer $u_{\gamma}^\even$ is non-radial for every $\gamma<\gamma_2$.
          \end{theorem}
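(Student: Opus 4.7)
The plan is to closely mirror the proof of Theorem \ref{teo-d-attained}, working in the restricted invariant space $D^{1,2}_{N-1,\even}$, and then to extract the last two assertions from a Morse-index comparison.

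\textbf{Non-negativity, existence, positivity.} For every $u\in\mathcal N^{\even}$, the Nehari constraint gives
\[
F(u)=\left(\frac12-\frac1{p_s}\right)\!\left(\int_{\R^N}|\nabla u|^2-\gamma\int_{\R^N}\frac{u^2}{|x|^2}\right),
\]
which is non-negative by Hardy's inequality together with $\gamma<(N-2)^2/4$, so $d_\gamma^{\even}\geq 0$. I would then take a minimising sequence and apply the concentration-compactness principle exactly as in Theorem \ref{teo-d-attained}: the Hardy-Sobolev weight $|x|^{-s}$ anchors the problem at the origin, while the evenness in $x_N$ combined with the $O(N-1)$-symmetry prevents the splitting of mass along the non-compact orbits of the action. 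By the principle of symmetric criticality (applied to $\mathcal G:=O(N-1)\times\mathbb{Z}_2$, under which $F$ is invariant), the constrained minimiser $u_\gamma^{\even}$ is a critical point of $F$ on the full $D^{1,2}(\R^N)$, hence a weak solution of \eqref{problem}. Replacing $u$ by $|u|$ leaves both $F$ and the Nehari constraint unchanged, so one may assume $u_\gamma^{\even}\geq 0$; strict positivity then follows from the strong maximum principle after the regularity step of Remark \ref{rem:regularity}.

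\textbf{Separation from $u_\gamma$ for $\gamma<\gamma_1$.} By Theorem \ref{teo-d-attained}, when $\gamma<\gamma_1$ the unconstrained minimiser $u_\gamma$ lies, up to a rotation, in the \emph{interior} of the cone $\mathcal K^1_{\pm}$, i.e.\ is strictly monotone in the polar angle $\theta\in[0,\pi]$. A function strictly monotone in $\theta$ cannot be invariant under any reflection fixing the origin, since such an invariance would force it to be constant in $\theta$, contradicting non-radiality. Therefore $u_\gamma\notin D^{1,2}_{N-1,\even}$ in any rotated frame, and so $u_\gamma^{\even}\neq u_\gamma$.

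\textbf{Non-radiality for $\gamma<\gamma_2$.} As in the proof of Theorem \ref{teo-d-attained}, a minimum of $F$ on the codimension-one manifold $\mathcal N^{\even}$ has Morse index exactly one in $D^{1,2}_{N-1,\even}$. On the other hand, Corollary \ref{cor:G-morse-index-2} applied to $\mathcal G=O(N-1)\times\mathbb{Z}_2$ expresses the Morse index of $U_\gamma$ inside $D^{1,2}_{N-1,\even}$ as the sum, over the admissible range of $j$ appearing in \eqref{eq:morse-index}, of the multiplicities of the $\mathcal G$-invariant spherical harmonics of degree $j$. By \eqref{eq:spherical-harmonics-N-1} there is exactly one $O(N-1)$-invariant harmonic per degree $j$, namely the Jacobi polynomial $P_j^{(\frac{N-3}2,\frac{N-3}2)}(\cos\theta)$, and from its explicit expression it is even in $x_N$ precisely when $j$ is even. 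Hence both $j=0$ and $j=2$ contribute to $m^{\mathcal G}(\gamma)$ as soon as $\gamma<\gamma_2$, giving $m^{\mathcal G}(\gamma)\geq 2$; a fortiori $u_\gamma^{\even}\neq U_\gamma$ and $u_\gamma^{\even}$ is non-radial.

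\textbf{Main obstacle.} The principal technical point is the existence step, where one must rule out loss of compactness of minimising sequences at the critical Hardy-Sobolev exponent. The interplay between the $|x|^{-s}$ weight (which anchors concentration at the origin) and the combined $O(N-1)\times\mathbb{Z}_2$ symmetry cuts down the admissible concentration profiles, following the same scheme as in the unrestricted case. All the other steps reduce to the Morse-index bookkeeping already developed in Section \ref{sectionradial}.
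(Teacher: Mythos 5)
Your proposal is essentially correct and mirrors the paper's argument: the paper likewise reduces the non-radiality statement to the Morse-index comparison $m^{\mathcal G}(\gamma)\geq 2$ versus Morse index $1$ of the constrained minimizer (counting only the even-degree Jacobi polynomials $j=0$ and $j=2$), and the separation from $u_\gamma$ to the incompatibility of strict $\theta$-monotonicity on $[0,\pi]$ with the symmetry $\theta\mapsto\pi-\theta$ imposed by evenness in $x_N$. One small imprecision in your separation step: a foliated Schwarz symmetric function \emph{is} invariant under every reflection in a hyperplane containing its axis, so the correct assertion is not that strict $\theta$-monotonicity rules out ``any reflection fixing the origin'' but only the equatorial reflection $\theta\mapsto\pi-\theta$ (combined with the observation that $O(N-1)$-invariance in the fixed frame forces the symmetry axis to be $\pm e_N$ unless $u_\gamma$ is radial); the conclusion $u_\gamma\notin D^{1,2}_{N-1,\even}$ then follows as you intend.
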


          \begin{proof}      
As previously, we omit the existence part of the proof. Following the former proof, one can show that $u_{\gamma}^\even$ is $O(N-1)$-invariant and even in $x_N$. So it belongs to $\mathcal K^1_{\pm}$ if and only if it is radial. Then Theorem \ref{teo-d-attained} implies that $u_{\gamma}^\even\neq u_{\gamma}$, for $\gamma<\gamma_1$, since $u_{\gamma}$ is strictly decreasing in $\theta$. Finally the fact that $ u_{\gamma}^\even$ is not radial when $\gamma<\gamma_2$ follows by Morse index considerations. Indeed, as before, the Morse index of  every minimizers $u_{\gamma}^\even$ in the space  $D^{1,2}_{N-1,\even}$ is one. When $\gamma<\gamma_2$, the Morse index of the radial solution $U_{\gamma}$ in the space  $D^{1,2}_{N-1,\even}$ is greater or equal than $2$, from \eqref{eq:morse-index} and Corollary \ref{cor:G-morse-index-2}. Recall indeed that corresponding to $j=1$ there are no eigenfunction $Y_j(\theta)$ of the Laplace Beltrami operator in $D^{1,2}_{N-1,\even}$, while corresponding to  $j=2$ the spherical harmonic $P_2^{(\frac {N-3}2,\frac {N-3}2)} (\theta)$ in \eqref{eq:spherical-harmonics-N-1} belongs to $D^{1,2}_{N-1,\even}$. Thus $u_{\gamma}^\even\neq U_{\gamma}$.             
            \end{proof}

\section{Other bifurcation results}\label{se:5}
In this section, we obtain other continua of bifurcating solutions by exploiting different monotonicity properties along other bifurcating branches. We start from the expression of the eigenfunctions $Y_k$ of the Laplace-Beltrami operator in ${\mathbb S}^{N-1}$. In spherical coordinates, they can be written as
\begin{equation}\label{N=3}
Y_k(\varphi_1,\theta)=\sum\limits_{\ell=0}^k  P_k^{\ell}(\cos\theta)\left(A_{\ell} \cos \ell\varphi_1+ B_{\ell}\sin \ell\varphi_1\right), 
\end{equation}
in dimension $N=3$, where $P_k^{\ell}$ are the associated Legendre polynomials, and as 
\begin{equation}\label{N>3}
\begin{split}
  Y_k(\varphi,\theta)
  & =\!\!\!\mathop{\sum\limits_{\ell=0}^{k} \prod\limits_{j=2}^{N\!-\!2}}\limits_{\substack{\ell=i_0\le i_1\dots i_{N-3}\le k}}\!\!\!
 G_{k}^{i_{N\!-\!3}}(\cos\theta,\!N-\!3) G_{i_{j-1}}^{i_{j\!-\!2}}(\cos\varphi_{j},j-2) \\
 &\ \ \ \ \ \ \ \left(A_{\ell}^{i_1\dots i_{N\!-\!3}} \cos \ell\varphi_1+ B_{\ell}^{i_1\dots i_{N\!-\!3}}\sin \ell\varphi_1\right), 
 \end{split}\end{equation}
in dimension $N>3$, where $G_{i}^{0}(\cdot,j)$ are the Gegenbauer Polynomials. 

We point out that $G_{i}^{\ell}(\omega,0) =  P_{i}^{\ell}(\omega) $ and $G_i^0(-, j)=P^{(\frac j2, \frac j2)}_i(-)$ are the Jacobi Polynomials as in \eqref{eq:spherical-harmonics-N-1}. In particular, there are eigenfunctions $Y_k$ periodic  with respect to the angle $\varphi_1$ of periodicity $\frac {2\pi}j$, for $j\in \N_0$, as far as $k\geq j$. When $j=k$, they are given by:
\[ \left(A_{k} \cos k\varphi_1+ B_{k}\sin k\varphi_1\right)(\sin \theta)^k\prod\limits_{j=2}^{N\!-\!2}(\sin \varphi_{j})^k .\]
We next introduce some functional spaces having the same symmetry properties. Let $O_j$, $j\in \N_0$, be the subgroup of rotations of angle $\frac {2\pi}j$ in the $(x_1,x_2)$-plane. Denote by $\tau$ the reflection with respect to the hyperplane $x_2=0$, namely $\tau(x_1,x_2,\dots,x_N)= (x_1,-x_2,\dots,x_N)$. For any $k\in\N_0$, we define
\begin{equation}\label{g-j}
\mathcal{G}_j\subset O(N) \ \text{ the subgroup generated by the elements of $O_j$ and by $\tau$ },
\end{equation}
and 
\begin{equation}
\label{Hj}
D^{1,2}_{j}\ : =\{v\in D^{1,2}(\R^N)\ \text{ such that } v(g(x))=v(x), \  \  \forall g\in \mathcal{G}_j, \  \forall x\in \R^N  \}.
\end{equation}
The functions in the spaces $ D^{1,2}_{j}$ clearly possess the following invariances (in polar coordinates $(x_1,x_2)=(\rho\cos \psi,\rho\sin \psi)$), with $\rho^2=x_1^2+x_2^2$ and $\psi\in[0,2\pi]$:
\begin{eqnarray} \label{fi00}
&v(\rho,\psi,x_3,\dots,x_N)=v(\rho,2\pi-\psi,x_3,\dots,x_N),\\\label{fi0}
&v(\rho,\psi,x_3,\dots,x_N)=v(\rho, \psi+\frac{2\pi}j,x_3,\dots,x_N) ,
\end{eqnarray}
and, by combining the above two relations,
\begin{equation}\label{fi1}
v(\rho,\frac \pi j+\psi, x_3,\dots,x_N)=v(\rho,\frac \pi j-\psi, x_3,\dots,x_N)
\end{equation}
for every $(\rho, \psi ,x_3,\dots,x_N)\in (0,\infty )\times [0,2\pi ] \times \R^{N-2}$. Let 
\[X_j=X\cap D^{1,2}_{j.}\]
We will prove that, for any $j\in\N$, there exists an unbounded continuum of solutions in $X_j$ bifurcating from the radial one.
In order to identify different continua, we restrict the operator $T$ to  suitable cones $\tilde{\mathcal K}^j _{\pm}\subset X_j$ defined, similarly as in  \cite{D2}, by imposing some  angular monotonicity on the $\mathcal G_j$-symmetric functions. Hence, for $j\in\mathbb N$,  we define the cone
 \begin{equation*}
\tilde{\mathcal K}^j_{\pm}\ : \ =
\left\{\begin{array}{l}v\in X_j, 
v(\rho,\psi, x_3,\dots,x_N) \hbox{ is non-decreasing (-increasing) in }\\ \psi \hbox{ for } (\rho, \psi)\in (0,\infty)\times[0,2\pi/j) , (x_3,\dots,x_N)\in\R^{N-2}
\end{array}\right\}.
\end{equation*}

By definition, $X_{\rad}\subset \tilde{\mathcal K}^j_{\pm}\subset X_j$ for any $j\geq 1$. Moreover, denoting by $\mathcal X_\psi$ the subspace of functions in $X$ that do not depend on the angle $\psi$,  we see that $\mathcal X_\psi\subset X_j$ for any $j\geq 1$ and
\begin{equation}\label{quelloCheGuadagnoConConi}
 \tilde{\mathcal K}^j_+\cap \tilde{\mathcal K}^j_-\subseteq\mathcal{X}_{\psi} \ ; 
\quad\tilde{\mathcal K}^j_{\pm}\cap \tilde{\mathcal K}^h_{\pm}\subseteq\mathcal{X}_{\psi}\
{\text{ and } \tilde{\mathcal K}^j_{+}\cap \tilde{\mathcal K}^h_{-}\subseteq\mathcal{X}_{\psi}.
} 
\end{equation}

Observe that, for any fixed $\gamma$, the operator $T(\gamma, \cdot)$ is compact and continuous in $\gamma$. Also its restriction to the subspaces $X_j$, $j\geq 1$, is compact and continuous in $\gamma$.\\

\

First we have
\begin{lemma} \label{lemma-operator-in-cone-2}
  The operator $T(\gamma, \cdot)$ maps $\tilde{\mathcal K}^j_{\pm}$ into $\tilde{\mathcal K}^j_{\pm}$.
\end{lemma}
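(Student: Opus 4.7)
The plan is to adapt the argument of Lemma \ref{lem-invariance-K} to the angular variable $\psi$ in the $(x_1,x_2)$-plane. First I would verify that $T(\gamma,\cdot)$ preserves the $\mathcal{G}_j$-invariance: if $h\in X_j$ and $u=T(\gamma,h)$, then for every $g\in \mathcal{G}_j$ the composition $u\circ g$ weakly solves the same linear equation as $u$, since $|x|$, $\Delta$, and $h$ are all $g$-invariant. The uniqueness statement of Lemma \ref{lem:appendix-1} then forces $u\circ g=u$, so $u\in X_j$; Kelvin invariance and the $L^\infty$ bound are inherited exactly as in Lemma \ref{lem:T-well-def}.

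For the monotonicity, take $h\in \tilde{\mathcal K}^j_+$ and work on the fundamental wedge
\[
W_j:=\{x\in\R^N\setminus\{0\}\,:\, 0<\psi<\pi/j\},
\]
where $\psi$ denotes the polar angle in the $(x_1,x_2)$-plane. I would introduce the angular derivative
\[
u_\psi := x_1\,\partial_{x_2}u - x_2\,\partial_{x_1}u,
\]
which belongs to $W^{1,q}_{loc}(\R^N)\cap C^{0,\alpha}_{loc}(\R^N\setminus\{0\})$ by Remark \ref{rem:regularity}. Because $\Delta$ and $|x|^{-2}$ both commute with rotations in the $(x_1,x_2)$-plane, $u_\psi$ weakly satisfies
\[
-\Delta u_\psi - \frac{\gamma}{|x|^2}u_\psi \,=\, C_\gamma(p_s-1)\,\frac{|h|^{p_s-2}}{|x|^s}\,h_\psi \qquad \text{in }\R^N\setminus\{0\}.
\]
The monotonicity hypothesis on $h$ gives $h_\psi\ge 0$ on $W_j$, while the reflection symmetries encoded in $\mathcal{G}_j$ (the reflection $\tau$ across $\{x_2=0\}$ composed with the rotations of $O_j$) force $u$ to be symmetric across both hyperplanes $\psi=0$ and $\psi=\pi/j$; hence $u_\psi\equiv 0$ on $\partial W_j\setminus\{0\}$.

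With these ingredients I would repeat the truncation argument of Lemma \ref{lem-invariance-K}: test the equation for $u_\psi$ against $\eta_\e^2(u_\psi)^-$, where $\eta_\e$ is the same radial cut-off as there, noting that $(u_\psi)^-$ extends by zero across $\partial W_j$ thanks to its vanishing on that boundary. Using $\gamma\le 0$, Hardy's inequality to absorb the $|x|^{-2}$ term, dominated convergence for the cut-off remainder, and Fatou's lemma as $\e\to 0$, one obtains $(u_\psi)^-\equiv 0$ on $W_j$, so $u_\psi\ge 0$ and therefore $u\in\tilde{\mathcal K}^j_+$. The case of $\tilde{\mathcal K}^j_-$ is handled identically with reversed inequalities.

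The main obstacle is the presence of the wedge boundary: in Lemma \ref{lem-invariance-K} the angular derivative was naturally defined and signed on the whole of $\R^N$ for $\mathcal{K}^1_\pm$, and a Dirichlet condition on a half-space boundary was needed only for $\mathcal{K}^2_\pm$. Here an analogous Dirichlet condition on $\partial W_j\setminus\{0\}$ is indispensable for the cut-off energy estimate to close; it is precisely the reflections built into $\mathcal{G}_j$ that provide it. Once this boundary vanishing is established, the remainder of the proof is a routine adaptation of the earlier argument.
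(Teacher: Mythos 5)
Your proof is correct and follows essentially the same route as the paper: establish $\mathcal G_j$-invariance of $T(\gamma,h)$ via uniqueness, derive the linear equation satisfied by the angular derivative in the $(x_1,x_2)$-plane, and run the cut-off test-function argument of Lemma \ref{lem-invariance-K} using $h_\psi\ge 0$ and a Dirichlet condition on the wedge boundary. You are in fact slightly more careful than the paper on one point: you take the fundamental wedge $W_j=\{0<\psi<\pi/j\}$ (where $u_\psi$ vanishes on both sides of the wedge by the reflection symmetries of $\mathcal G_j$), whereas the paper writes $\Sigma_j=\{0<\psi<2\pi/j\}$; since a $2\pi/j$-periodic function that is monotone on a full period is constant, your half-period wedge is the one on which the monotonicity claim and the boundary vanishing of $u_\psi$ genuinely make sense.
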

\begin{proof}
  Let $h\in X_j$ and $z=T(\gamma,h)$. It is easy to see that $T(\gamma, \cdot)$ maps $X_j$ into $X_j$. Next, we consider the case of $h\in \tilde{\mathcal K}^j_{+}$. 
  By Remark \ref{rem:regularity}, we know that $z\in W^{2,q}(\R^N)\cap C^{1,\alpha}_{loc}(\R^N\setminus\{0\})$ so $z_{\psi}:=\frac{\partial z}{\partial \psi}\in {W^{1,q}_{loc}(\R^N)}\cap C^{0,\alpha}_{loc}(\R^N\setminus\{0\})$. For any $\varphi\in C^{\infty}_0(\R^N\setminus\{0\})$ 
one can check that $z_\psi$ weakly solves
  \begin{equation}\label{eq:z-psi}
  \int_{\R^N} \nabla z_\psi\nabla \varphi-\gamma\int_{\R^N}|x|^{-2}z_\psi\varphi=\bar C_\gamma
  \int_{\R^N} \frac {|h|^{p_s-2}}{|x|^s}h_\psi\varphi ,\end{equation}
  where $h_\psi:=\frac{\partial v}{\partial \psi}$. 
  
Arguing as in the proof of Lemma \ref{lem-invariance-K} we deduce that
  \[\int_{\Sigma_j} |x|^{-2}(z_\psi^-)^2=0,\]
  where $\Sigma_j:=\{x\in \R^N\setminus\{0\}\ , 0< \psi< \frac{2 \pi} j\}$,
  showing that $z_\psi\geq 0$.
\end{proof}

\

As in Section \ref{se:4}, we have 
\begin{lemma}\label{lemma:calcoloIndexCono2} 
Let
  $\gamma\neq \gamma_i$, $i\in \N$, be defined as in \eqref{eq:gamma-j}. Then, for $j\geq 1$, we have 
\[
\mathrm{ind}_{\tilde{\mathcal K}^j_{\pm}}\left(T(\gamma,\cdot),U_\gamma\right)=\left\{
\begin{array}{ll}
0 & \text{ if }\gamma>\gamma_j ,\\
(- 1)^{m^j(\gamma)}& \text{ if }\gamma<\gamma_j ,
\end{array}\right.
\]
where $m^j(\gamma)$ denotes the Morse index of $U_\gamma$ in the symmetric space $X_j$.
\end{lemma}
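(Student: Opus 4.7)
The plan is to adapt, step by step, the proof of Lemma \ref{lemma:calcoloIndexCono} to the cone $\tilde{\mathcal K}^j_{\pm}$ inside the symmetric subspace $X_j$. First, by Lemma \ref{lem-nondegeneracy} applied to $X_j \subset X$, the operator $I - T_v(\gamma, U_\gamma)$ is invertible on $X_j$ whenever $\gamma \neq \gamma_i$ for every $i$. Hence $U_\gamma$ is an isolated fixed point of $T(\gamma,\cdot)$ restricted to $X_j$ and its cone index is well defined.

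The heart of the argument is Dancer's \emph{property $\alpha$}: I would claim that for every $t \in (0,1)$ the equation
\[
-\Delta h - \frac{\gamma}{|x|^2} h - t\,(N-s)(N+2-2s)\nu_\gamma^2 \frac{|x|^{\nu_\gamma(2-s)-2}}{\big(1+|x|^{(2-s)\nu_\gamma}\big)^2} h = 0 \quad \text{in } \R^N\setminus\{0\}
\]
admits no nontrivial non-radial solution in $X_j$ if and only if $\gamma > \gamma_j$. To prove this, I decompose any solution $h \in X_j$ along the spherical harmonics, $h = \sum_k \psi_k(|x|) Y_k(x/|x|)$. By the expressions \eqref{N=3}--\eqref{N>3} combined with the definition \eqref{g-j} of $\mathcal{G}_j$, the harmonics that survive in $X_j$ are precisely those whose $\varphi_1$--Fourier series contains only cosine modes of frequency a multiple of $j$; the smallest non-trivial example has degree exactly $k = j$, namely $(\sin\theta)^j \prod_{m=2}^{N-2}(\sin\varphi_m)^j \cos(j\varphi_1)$ up to a radial factor. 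For each admissible $k \geq j$, the radial profile $\psi_k$ solves a one-parameter family of weighted eigenvalue problems whose smallest eigenvalue $\mu_t^{(k)}$ is continuous and strictly decreasing in $t$, with $\mu_0^{(k)} > 0$ by Lemma \ref{lem-first-eigenv} and $\mu_1^{(k)} < 0$ iff $\gamma < \gamma_k$ by Proposition \ref{lemma-lin}. Since $k \mapsto \gamma_k$ is strictly decreasing, the first crossing occurs at $k = j$, which yields the claim.

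With property $\alpha$ in hand, the index assertion follows by the same application of \cite[Theorem 1]{D83} used in Lemma \ref{lemma:calcoloIndexCono}. When $\gamma > \gamma_j$, property $\alpha$ holds and the cone index vanishes. When $\gamma < \gamma_j$, property $\alpha$ fails and the standard linear fixed point index formula (see \cite{AM}) yields
\[
\mathrm{ind}_{\tilde{\mathcal K}^j_{\pm}}(T(\gamma,\cdot), U_\gamma) = \mathrm{ind}_{X_j}(T_v(\gamma, U_\gamma), 0) = (-1)^{m^j(\gamma)}.
\]

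The main obstacle will be the clean identification of the $\mathcal{G}_j$-invariant subset of the eigenfunctions of $-\Delta_{S^{N-1}}$, so as to pin down $k = j$ as the first mode in $X_j$ at which a non-radial degeneracy can appear; one must in particular verify that \emph{no} spherical harmonic of degree strictly less than $j$ survives in $X_j$, which is a purely representation-theoretic fact that must be extracted from the explicit product formulas \eqref{N=3}--\eqref{N>3}. Once this combinatorial step is carried out, the monotonicity of the eigenvalues $\mu_t^{(k)}$ and the conclusion via \cite{D83} run exactly as in the proof of Lemma \ref{lemma:calcoloIndexCono}.
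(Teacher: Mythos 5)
Your overall plan (invertibility of $I - T_v$ on $X_j$, Dancer's property $\alpha$ via spherical-harmonic decomposition, eigenvalue monotonicity in $t$, and then \cite[Theorem 1]{D83}) is indeed the same route as the paper, which reduces the lemma to observing from \eqref{N>3} that the relevant eigenfunctions in $X_j$ appear at $\gamma = \gamma_j$. However, there is a genuine gap in how you formulate the property $\alpha$ claim, and it matters.

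The maximal linear subspace of the wedge $\tilde{\mathcal K}^j_{\pm}$ is not $X_{\rad}$ but $\mathcal X_{\psi}$, the space of $\psi$-independent functions (cf.\ \eqref{quelloCheGuadagnoConConi}: $\tilde{\mathcal K}^j_+ \cap \tilde{\mathcal K}^j_- \subseteq \mathcal X_\psi$). Therefore the Dancer-type claim must concern nontrivial solutions that \emph{depend on the angle $\psi$} (i.e.\ lie outside $\mathcal X_\psi$), not merely non-radial ones. Your statement ``admits no nontrivial non-radial solution in $X_j$ if and only if $\gamma > \gamma_j$'' is literally false: the $\ell=0$ Fourier modes (e.g.\ the $O(N-1)$-invariant harmonic $Y_1(\theta)=\cos\theta$) all survive in $X_j$ for every $j$, are non-radial, and already produce, for some $t\in(0,1)$, a nontrivial solution of \eqref{o3} once $\gamma<\gamma_1$ -- even if $\gamma>\gamma_j$. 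Your subsequent assertion that ``\emph{no} spherical harmonic of degree strictly less than $j$ survives in $X_j$'' is therefore also false, and it contradicts your own earlier (correct) identification of the surviving harmonics as those with $\varphi_1$-cosine modes of frequency a multiple of $j$ (which includes $\ell=0$). What is true and what you need is the sharper statement: no \emph{$\psi$-dependent} ($\ell\ge j$) harmonic of degree $< j$ survives, and the first one occurs at $k=j$. Once you re-state the property $\alpha$ claim in terms of $\mathcal X_\psi$ rather than $X_{\rad}$, your eigenvalue-monotonicity argument restricted to $\ell\ge j$ modes correctly shows the first crossing is at $\gamma_j$, and the conclusion via \cite{D83} then goes through as in the paper.
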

\begin{proof}
 The proof is very similar to the one of Lemma \ref{lemma:calcoloIndexCono}. Just observe that, from \eqref{N>3}, there exists an eigenfunction in $X_j$ (associated with a negative eigenvalue) which depends on the angle $\psi$ if and only if $\gamma <\gamma_j$.
\end{proof}

Again we have:
\begin{theorem}
\label{teo-bif-2} 
The points $(\gamma_j,U_{\gamma_j} )$, $j\geq 1$ are non-radial bifurcation points from the curve of radial solutions $(\gamma, U_\gamma)$ and the bifurcating solutions belong to the cone $\tilde{\mathcal K}^j_{\pm}$. Moreover, the continuum $\tilde{\mathcal{C}}_j^{\pm}\subseteq \tilde{\mathcal K}^j_{\pm}
$, $j\in \N$, that branches out
of $(\gamma_j, U_{\gamma_j})$ is unbounded in $(-\infty,0)\times \tilde{\mathcal K}^j_{\pm}$
and the continua $\tilde{\mathcal C}_j^{+}$, $\tilde{\mathcal C}_j^{-}$, $\tilde{\mathcal C}_h^{+}$, $\tilde{\mathcal C}_h^{-}$ can intersect only in $\mathcal X_{\psi}$.
\end{theorem}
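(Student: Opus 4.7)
The plan is to mimic the three-step strategy of the proof of Theorem~\ref{teo-bif-a}, working now in the cone $\tilde{\mathcal K}^j_\pm\subset X_j$. The two key ingredients are already at hand: Lemma~\ref{lemma-operator-in-cone-2} gives the cone invariance $T(\gamma,\cdot)(\tilde{\mathcal K}^j_\pm)\subseteq\tilde{\mathcal K}^j_\pm$, while Lemma~\ref{lemma:calcoloIndexCono2} records that Dancer's cone fixed-point index of $U_\gamma$ jumps at $\gamma_j$ from $0$ (for $\gamma>\gamma_j$) to $(-1)^{m^j(\gamma)}=\pm 1$ (just below $\gamma_j$, where only the degree-$j$ harmonics compatible with the symmetries of $X_j$ contribute).

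\medskip

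For the \emph{local bifurcation}, I would argue by contradiction as in Step~1 of Theorem~\ref{teo-bif-a}: if $(\gamma_j,U_{\gamma_j})$ were not a bifurcation point in $\tilde{\mathcal K}^j_\pm$, one could find $\delta>0$ (small enough that no other $\gamma_h$ lies in $[\gamma_j-\delta,\gamma_j+\delta]$) and a neighborhood $\mathcal O$ of the radial branch over this interval in $\tilde{\mathcal K}^j_\pm$ in which $U_\gamma$ is the unique fixed point. Homotopy invariance of the cone index~\cite{D83} would then force $\mathrm{ind}_{\tilde{\mathcal K}^j_\pm}(T(\gamma,\cdot),U_\gamma)$ to be constant on this interval, contradicting Lemma~\ref{lemma:calcoloIndexCono2}. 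The bifurcating solutions lie in $\tilde{\mathcal K}^j_\pm$ by Lemma~\ref{lemma-operator-in-cone-2}, and are non-radial thanks to the radial nondegeneracy of $U_{\gamma_j}$ in $X$ established in Lemma~\ref{lem-nondegeneracy}.

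\medskip

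For the \emph{global alternative}, let $\tilde{\mathcal C}_j^\pm$ be the maximal closed connected component through $(\gamma_j,U_{\gamma_j})$ in the set of non-radial solutions in $(-\infty,0]\times\tilde{\mathcal K}^j_\pm$. The Rabinowitz trichotomy reads (a) $\tilde{\mathcal C}_j^\pm$ unbounded, (b) $\tilde{\mathcal C}_j^\pm$ meets $\{\gamma=0\}$, or (c) $\tilde{\mathcal C}_j^\pm$ returns to some $(\gamma_h,U_{\gamma_h})$ with $h\neq j$. Alternative (b) is ruled out by Proposition~\ref{prop:existence} (no non-radial solutions for $\gamma>\gamma_1$) combined with the nontriviality of the branch in Proposition~\ref{prop:unbounded-interval}. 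For alternative (c), I would invoke the parity argument of \cite[Proposition~2.3]{G}: on a bounded closed continuum, the algebraic sum of the jumps of $\mathrm{ind}_{\tilde{\mathcal K}^j_\pm}$ at the radial bifurcation points lying on $\tilde{\mathcal C}_j^\pm$ must vanish. By Lemma~\ref{lemma:calcoloIndexCono2}, the jump at $\gamma_j$ equals $\pm 1$, while at any other $\gamma_h$ on the continuum (necessarily $h>j$, since $\gamma>\gamma_1$ is excluded) the index stays in $\{-1,+1\}$ and so its change is either $0$ or $\pm 2$. The total sum is therefore odd, contradicting (c).

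\medskip

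The intersection property follows at once from~\eqref{quelloCheGuadagnoConConi}: an element of $\tilde{\mathcal C}_j^+\cap\tilde{\mathcal C}_j^-$ (or of $\tilde{\mathcal C}_j^\pm\cap\tilde{\mathcal C}_h^\pm$) lies in both cones, hence is $\psi$-independent, i.e., in $\mathcal X_\psi$. The main delicate point is the parity accounting in the global step: unlike Theorem~\ref{teo-bif-a}, where the cone index takes only the values $0$ or $-1$ with a unique jump (Lemma~\ref{lemma:calcoloIndexCono}), here $(-1)^{m^j(\gamma)}$ may oscillate as one crosses successive $\gamma_h$'s. The observation that rescues the argument is that each such oscillation moves the cone index by an even amount, so only the initial jump from $0$ at $\gamma_j$ contributes an odd unit, which is exactly what breaks the parity required for closure and forces $\tilde{\mathcal C}_j^\pm$ to be unbounded.
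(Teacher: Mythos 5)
Your proposal correctly reproduces the strategy the paper adopts (it simply declares the proof a straightforward adaptation of Theorem~\ref{teo-bif-a}) and, more importantly, you put your finger on the one genuine subtlety that the literal wording of the earlier proof does not cover. In Theorem~\ref{teo-bif-a} the cone index takes only the values $0$ and $-1$, so the statement quoted there, that ``the number of degeneracy points at which the index changes must be even,'' immediately forces unboundedness because the index changes exactly once, at $\gamma_j$. In the present setting Lemma~\ref{lemma:calcoloIndexCono2} gives $\mathrm{ind}_{\tilde{\mathcal K}^j_\pm}(T(\gamma,\cdot),U_\gamma)=(-1)^{m^j(\gamma)}\in\{\pm1\}$ for $\gamma<\gamma_j$, and $m^j$ can change parity when $\gamma$ crosses a later value $\gamma_h$ whose $\mathcal G_j$-invariant harmonic space has odd dimension, so the ``number of change points'' need not be odd. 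Your reformulation in terms of the algebraic sum of jumps is exactly the right fix: the jump at $\gamma_j$ is $\pm1$ (odd), every other jump is $0$ or $\pm2$ (even), and the total is therefore odd, which is incompatible with a bounded continuum. This is a cleaner and more robust bookkeeping than the one sentence in the proof of Theorem~\ref{teo-bif-a}, and it is what makes the ``straightforward adaptation'' actually close.

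One small inaccuracy: you write that the other degeneracy points on the continuum ``necessarily'' satisfy $h>j$ because $\gamma>\gamma_1$ is excluded. Proposition~\ref{prop:existence} only rules out $\gamma>\gamma_1$, so the continuum could a priori meet $\gamma_h$ for $1\le h<j$ (all of which lie in $(\gamma_j,\gamma_1]$). At such points, however, both one-sided cone indices equal $0$ because both sides are above $\gamma_j$, and moreover the kernel there is entirely $\psi$-independent (only $\ell=0$ modes of $Y_h$ belong to $X_j$ when $h<j$), so the contribution to the jump sum is $0$ and your conclusion is unchanged. The rest of the argument — cone invariance via Lemma~\ref{lemma-operator-in-cone-2}, local bifurcation from the index jump via homotopy invariance, exclusion of alternative~(b) by Proposition~\ref{prop:unbounded-interval}, and the intersection statement via~\eqref{quelloCheGuadagnoConConi} — matches the paper's intent.
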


\begin{proof}
The proof is a straightforward adaptation of the proof of Theorem \ref{teo-bif-a} and we omit it. The last property follows from \eqref{quelloCheGuadagnoConConi}.
\end{proof}

\

\appendix

\section{}\label{se:7}

In this appendix, we collect some general results on solutions $w \in D^{1,2}(\R^N) $ to
\begin{equation}\label{eq:standard-linear}
  -\Delta w-\frac{\gamma}{|x|^2}w=f(x) \ \ \text{ in }\R^N\setminus\{0\},
  \end{equation}
where $\gamma \leq 0$ and $N\geq 3$.
\begin{lemma}\label{lem:appendix-1}
For every function $f(x)\in L^{\frac{2N}{N+2}}(\R^N)$, \eqref{eq:standard-linear} admits a unique solution in $D^{1,2}(\R^N)$.
\end{lemma}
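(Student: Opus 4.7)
The plan is a direct application of the Lax--Milgram theorem to the bilinear form
\[
a(u,v) := \int_{\R^N} \nabla u \cdot \nabla v \, dx - \gamma \int_{\R^N} \frac{uv}{|x|^2} \, dx
\]
on the Hilbert space $D^{1,2}(\R^N)$ endowed with the inner product induced by the norm $\|u\|_{1,2}^2 = \int_{\R^N} |\nabla u|^2\,dx$. First I would verify that $a$ is well-defined and continuous: the gradient term is clearly controlled by $\|u\|_{1,2}\|v\|_{1,2}$, and for the singular term Cauchy--Schwarz combined with Hardy's inequality
\[
\int_{\R^N} \frac{u^2}{|x|^2}\,dx \le \frac{4}{(N-2)^2}\int_{\R^N}|\nabla u|^2\,dx
\]
gives $\big|\int \frac{uv}{|x|^2}\big| \le \frac{4}{(N-2)^2}\|u\|_{1,2}\|v\|_{1,2}$, hence $|a(u,v)| \le C(\gamma,N)\|u\|_{1,2}\|v\|_{1,2}$.

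Coercivity is where the sign of $\gamma$ is used: since $\gamma \le 0$, the singular term is nonnegative, so
\[
a(u,u) = \int_{\R^N}|\nabla u|^2\,dx - \gamma\int_{\R^N}\frac{u^2}{|x|^2}\,dx \ge \|u\|_{1,2}^2.
\]
(More generally one could allow any $\gamma<(N-2)^2/4$ using Hardy, but here $\gamma\le 0$ makes this immediate.)

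Next I would show that $L(v) := \int_{\R^N} f v\,dx$ defines a continuous linear functional on $D^{1,2}(\R^N)$. By the Sobolev embedding $D^{1,2}(\R^N) \hookrightarrow L^{2^*}(\R^N)$ with $2^* = \frac{2N}{N-2}$, and since the assumption $f \in L^{\frac{2N}{N+2}}(\R^N)$ is precisely $f \in L^{(2^*)'}$, Hölder's inequality yields
\[
|L(v)| \le \|f\|_{L^{2N/(N+2)}}\|v\|_{L^{2^*}} \le S^{-1/2}\|f\|_{L^{2N/(N+2)}}\|v\|_{1,2},
\]
where $S$ is the Sobolev constant. The Lax--Milgram theorem then produces a unique $w \in D^{1,2}(\R^N)$ such that $a(w,v) = L(v)$ for all $v \in D^{1,2}(\R^N)$. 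Taking test functions in $C_0^\infty(\R^N\setminus\{0\}) \subset D^{1,2}(\R^N)$ shows $w$ is a weak solution of \eqref{eq:standard-linear}, and uniqueness in $D^{1,2}(\R^N)$ follows at once from the coercivity estimate applied to the difference of two solutions. There is no real obstacle here; the only subtle point worth checking is that all integrals in $a$ make sense on $D^{1,2}(\R^N)$, which Hardy's inequality guarantees.
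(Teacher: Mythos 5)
Your Lax--Milgram argument is correct and complete; continuity, coercivity (using $\gamma\le 0$ so that the Hardy term has a favorable sign), and boundedness of the linear functional $L$ via the Sobolev embedding $D^{1,2}(\R^N)\hookrightarrow L^{2^*}(\R^N)$ are all exactly what is needed, and uniqueness follows directly from coercivity. The paper itself does not spell this out, deferring instead to Lemma 5.2 of \cite{DGG}, but that reference uses the same standard variational argument, so your proof matches the intended approach.
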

\begin{proof} See \cite[Lemma 5.2]{DGG}. \end{proof}
\begin{lemma}[Maximum Principle]\label{lem:appendix-2}
Let $f(x)\in L^{\frac{2N}{N+2}}(\R^N)$ such that $f(x)\leq 0$ almost everywhere in $\R^N\setminus \{0\}$ and $w\in D^{1,2}(\R^N)$ be a solution to \eqref{eq:standard-linear}. Then $w\leq 0$ in $\R^N\setminus\{0\}$. 
\end{lemma}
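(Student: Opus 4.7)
The natural approach is the standard energy test against the positive part $w^+ := \max(w,0)$, exploiting the fact that both the Hardy term (since $\gamma\le 0$) and the right-hand side (since $f\le 0$) carry the "good" sign when paired with $w^+$.

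First I would make sure the weak formulation of \eqref{eq:standard-linear} extends to test functions in $D^{1,2}(\R^N)$ and, in particular, to $w^+$. By the definition of weak solution in Lemma \ref{lem:appendix-1}, the identity
\[
\int_{\R^N}\nabla w\cdot\nabla\varphi\,dx-\gamma\int_{\R^N}\frac{w\,\varphi}{|x|^2}\,dx=\int_{\R^N}f\,\varphi\,dx
\]
holds for every $\varphi\in C_0^\infty(\R^N)$. Using Hardy's inequality, each of the three integrands is absolutely integrable whenever $\varphi\in D^{1,2}(\R^N)$ (the second term via Hardy, the third via H\"older since $f\in L^{2N/(N+2)}$ and $\varphi\in L^{2^*}$), and density of $C_0^\infty(\R^N)$ in $D^{1,2}(\R^N)$ yields the identity for all $\varphi\in D^{1,2}(\R^N)$. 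Since $w\in D^{1,2}(\R^N)$ implies $w^+\in D^{1,2}(\R^N)$ with $\nabla w^+=\chi_{\{w>0\}}\nabla w$, the choice $\varphi=w^+$ is admissible.

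The second step is to plug $\varphi=w^+$ into the identity and use $\nabla w\cdot\nabla w^+=|\nabla w^+|^2$ and $w\,w^+=(w^+)^2$ a.e., obtaining
\[
\int_{\R^N}|\nabla w^+|^2\,dx-\gamma\int_{\R^N}\frac{(w^+)^2}{|x|^2}\,dx=\int_{\R^N}f\,w^+\,dx.
\]
Since $\gamma\le 0$, the left-hand side is bounded below by $\int_{\R^N}|\nabla w^+|^2\,dx$, while the right-hand side is $\le 0$ because $f\le 0$ and $w^+\ge 0$. Hence $\int_{\R^N}|\nabla w^+|^2\,dx\le 0$, so $\nabla w^+=0$ a.e. in $\R^N$.

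Finally, a connectedness argument gives that $w^+$ is (a.e.) constant on $\R^N$. Since $w^+\in D^{1,2}(\R^N)\hookrightarrow L^{2^*}(\R^N)$ by the Sobolev embedding and the only constant belonging to $L^{2^*}(\R^N)$ is $0$, we conclude $w^+\equiv 0$, i.e.\ $w\le 0$ a.e.\ in $\R^N\setminus\{0\}$. The main subtlety — rather than a real obstacle — is making the test function $w^+$ rigorous in the presence of the inverse-square potential; this is handled by Hardy's inequality, which makes the singular term $\int \frac{(w^+)^2}{|x|^2}dx$ automatically finite for $w^+\in D^{1,2}(\R^N)$, so no cut-off at the origin is needed (consistent with the fact that $\{0\}$ has zero $H^1$-capacity for $N\ge 3$).
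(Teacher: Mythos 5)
Your proof is correct and follows essentially the same route as the paper's: test the weak formulation against $w^+$, use $\gamma\le 0$ and $f\le 0$ to get $\int|\nabla w^+|^2\le 0$, and conclude $w^+\equiv 0$. The only difference is that you spell out the density argument for admissible test functions and the step from $\nabla w^+=0$ to $w^+=0$ via the Sobolev embedding, both of which the paper silently takes for granted.
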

\begin{proof}
  By definition of $w$, we have that
  \[\int_{\R^N}\nabla w\nabla \psi \, dx-\gamma \int_{\R^N} \frac{w\psi}{|x|^2}\, dx =\int_{\R^N}f(x)\psi(x) \ dx\]
  for every $\psi\in D^{1,2}(\R^N)$. Choosing $\psi=w^+$ (where $a^+$ denotes the positive part of $a$, namely $a^+=\max\{a(x),0\}$) and using that $f\leq 0$, we have
\[\int_{\R^N}|\nabla w^+|^2 \, dx-\gamma \int_{\R^N} \frac{(w^+)^2}{|x|^2}\, dx =\int_{\R^N}f(x)w^+ \ dx\leq 0.\]
This implies that
\[\int_{\R^N} |\nabla w^+|^2=0,\]
and $w^+=0$ almost everywhere in $\R^N\setminus\{0\}$.
\end{proof}

As a consequence of the Maximum Principle, the Comparison Principle also holds, namely
\begin{lemma}[Comparison Principle]\label{lem:appendix-3}
Let $f_1, f_2\in L^{\frac{2N}{N+2}}(\R^N)$ such that
  $f_1\leq f_2$ almost everywhere in $\R^N\setminus \{0\}$. Then $w_1\leq w_2$ where $w_i$ is the unique weak solutions to \eqref{eq:standard-linear} corresponding to $f_i$.
\end{lemma}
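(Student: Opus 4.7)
The plan is to reduce the statement to the Maximum Principle (Lemma \ref{lem:appendix-2}) via linearity of the operator. First, I would set $w := w_1 - w_2$. Since $f_1, f_2 \in L^{\frac{2N}{N+2}}(\R^N)$, Lemma \ref{lem:appendix-1} guarantees that $w_1, w_2$, and hence $w$, all belong to $D^{1,2}(\R^N)$. Subtracting the weak formulations satisfied by $w_1$ and $w_2$ and exploiting the linearity of $-\Delta - \frac{\gamma}{|x|^2}\,I$, I obtain that $w$ is a weak solution to
\[
-\Delta w - \frac{\gamma}{|x|^2} w = f_1 - f_2 \quad \text{in } \R^N\setminus\{0\},
\]
with right-hand side in $L^{\frac{2N}{N+2}}(\R^N)$.

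Next, the hypothesis $f_1 \leq f_2$ almost everywhere in $\R^N\setminus\{0\}$ translates into $f_1 - f_2 \leq 0$ a.e., so the assumptions of Lemma \ref{lem:appendix-2} are fulfilled. Applying the Maximum Principle directly yields $w \leq 0$ in $\R^N\setminus\{0\}$, which is precisely $w_1 \leq w_2$ and concludes the argument.

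There is essentially no obstacle to this proof: once the existence and uniqueness statement (Lemma \ref{lem:appendix-1}) has been combined with the Maximum Principle (Lemma \ref{lem:appendix-2}), the Comparison Principle is an immediate corollary via the linearity of the operator. The only mild point to check is that the test function $w^+$ used implicitly in Lemma \ref{lem:appendix-2} still belongs to $D^{1,2}(\R^N)$ when $w = w_1 - w_2$, which follows from standard truncation properties in $D^{1,2}(\R^N)$.
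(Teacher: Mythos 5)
Your proof is correct and is precisely the argument the paper has in mind: the paper presents the Comparison Principle without proof, noting only that it is "a consequence of the Maximum Principle," and your reduction of $w=w_1-w_2$ via linearity followed by an application of Lemma \ref{lem:appendix-2} is exactly that consequence.
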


\

Finally, we obtain a decay estimate for solutions to a suitable linear equation. 
\begin{lemma}\label{lem-decad-w}Let $w\in D^{1,2}(\R^N)$ be the unique weak solution to
  \begin{equation}\label{w}
  -\Delta w- \frac \gamma{|x|^2} w=\frac 1{|x|^s (1+|x|)^{N+2-s}} \text{ in }\R^N\setminus\{0\},\end{equation}
  with $N\geq 3$, $\gamma\leq 0$ and $s\in[0,2)$. Then $w\in L^{\infty}(\R^N)$ and 
  \begin{equation}\label{eq:estimate-w}
  \begin{split}
    &    |w(x)||x|^{N-2} \leq C\max \{      |x|^{a_\gamma},|x|^{-2}\}
     \ \text{ as  } |x|\to \infty, \\
    &      |w(x)| \leq C  \max\{|x|^{-a_{\gamma}},|x|^{2-s}\}
    \ \text{ as }|x|\to 0.
         \end{split}
       \end{equation}
       Moreover when $\gamma<0$ it holds
       \begin{equation}\label{eq:decay-w}
 |w(x)||x|^{N-2}=o(1)  \ \text{ as  } |x|\to \infty \ \ \text{ and } \ \   |w(x)| =o(1)   \ \text{ as }|x|\to 0.
         \end{equation}
\end{lemma}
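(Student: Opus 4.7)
The plan is to derive the estimates from an explicit Green's-function representation of $w$ in the radial variable and then read off the pointwise bounds by direct integration. To set the stage, observe that $f(x) = |x|^{-s}(1+|x|)^{-(N+2-s)}$ lies in $L^{2N/(N+2)}(\R^N)$ and is radially symmetric, so Lemma \ref{lem:appendix-1} produces a unique weak $D^{1,2}$ solution $w$ which inherits radial symmetry from the rotational invariance of $L := -\Delta - \gamma/|x|^2$. Since $f \geq 0$, applying Lemma \ref{lem:appendix-2} to $-w$ yields $w \geq 0$.

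The identity $a_\gamma(N-2-a_\gamma) = \gamma$, an immediate consequence of the definitions of $a_\gamma$ and $\nu_\gamma$, shows that $r^{-a_\gamma}$ and $r^{a_\gamma-(N-2)}$ are two linearly independent radial solutions of $Lv = 0$; their Wronskian equals $W(r) = -(N-2)\nu_\gamma\, r^{1-N}$. Variation of parameters applied to the radial ODE $-(r^{N-1}w')'-\gamma r^{N-3}w = r^{N-1}f$ then yields
\begin{equation*}
w(r) = \frac{1}{(N-2)\nu_\gamma}\left[r^{a_\gamma-(N-2)}\int_0^r \rho^{N-1-a_\gamma}f(\rho)\,d\rho + r^{-a_\gamma}\int_r^\infty \rho^{1+a_\gamma}f(\rho)\,d\rho\right].
\end{equation*}
A routine check (using the decay of $f$ and that $a_\gamma \leq 0$) confirms that this right-hand side lies in $D^{1,2}(\R^N)$, so by the uniqueness in Lemma \ref{lem:appendix-1} it coincides with $w$.

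The estimates \eqref{eq:estimate-w} now follow by splitting each integral at $\rho = 1$ and inserting $f(\rho) = \rho^{-s}(1+\rho)^{-(N+2-s)}$. For $r \leq 1$: the first integral is comparable to $\int_0^r \rho^{N-1-a_\gamma-s}d\rho = O(r^{N-a_\gamma-s})$, contributing $O(r^{2-s})$ after multiplication by $r^{a_\gamma-(N-2)}$; the second integral, split as $\int_r^1 + \int_1^\infty$, is $O(1)$ when $a_\gamma > s-2$ and $O(r^{2+a_\gamma-s})$ when $a_\gamma < s-2$, contributing $O(r^{-a_\gamma})$ or $O(r^{2-s})$ respectively after multiplication by $r^{-a_\gamma}$. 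Taking the maximum produces $|w(r)| \leq C\max\{r^{-a_\gamma}, r^{2-s}\}$. The analysis for $r \geq 1$ is symmetric: the second integral yields $O(r^{-N})$ unconditionally, while the first gives $O(r^{a_\gamma-(N-2)})$ when $a_\gamma > -2$ and $O(r^{-N})$ when $a_\gamma < -2$, delivering the infinity bound. Combined with standard interior regularity on any compact annulus $\{r_0 \leq |x| \leq R_0\}$, this yields $w \in L^\infty(\R^N)$. When $\gamma < 0$ one has $a_\gamma < 0$, so $r^{-a_\gamma} \to 0$ as $r \to 0$ and $r^{a_\gamma} \to 0$ as $r \to \infty$, establishing the $o(1)$ claims in \eqref{eq:decay-w}. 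The only delicate points are the borderline exponents $a_\gamma \in \{s-2,\, -2\}$, where a logarithmic factor appears in one of the integrals; it is absorbed into the constant by bounding $\log(1/r)$ or $\log r$ by $r^{\pm\epsilon}$ for small $\epsilon>0$, which does not spoil the $\max$-type bounds.
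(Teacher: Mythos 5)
Your proposal is correct, and it takes a genuinely different (though related) route from the paper. The paper removes the $\gamma/|x|^2$ singularity by the substitution $\bar w := r^{a_\gamma} w$, which turns the radial equation into $-(r^{N-1-2a_\gamma}\bar w')' = r^{N-1-a_\gamma-s}(1+r)^{-(N+2-s)}$, and then integrates this ODE twice, using Ni's radial lemma to pin down the constant of integration at infinity. You instead write the solution down explicitly by variation of parameters against the two homogeneous solutions $r^{-a_\gamma}$ and $r^{a_\gamma-(N-2)}$ (your Wronskian and indicial computation are both correct: $a_\gamma(N-2-a_\gamma)=\gamma$, $W=-(N-2)\nu_\gamma r^{1-N}$, normalising constant $1/((N-2)\nu_\gamma)$), and then read off the asymptotics by splitting each integral at $\rho=1$. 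Your resulting estimates match the paper's: $O(r^{2-s})+O(r^{-a_\gamma})$ near the origin, $O(r^{a_\gamma-(N-2)})+O(r^{-N})$ at infinity. The Green's-function route is arguably more transparent as it isolates explicitly which homogeneous solution governs which regime; the paper's substitution is thematically aligned with the $r^{a_\gamma b_{\gamma,s}}$ change of variables used in Lemma \ref{lem:equiv-rad}. Two small remarks: (1) you should say a word on why the explicit formula is a \emph{weak} solution on all of $\R^N$ (no singular mass at the origin), which is needed before invoking the uniqueness in Lemma \ref{lem:appendix-1}; the $D^{1,2}$ membership you verify, together with the ODE holding classically away from $0$, does handle this but the step deserves a sentence. (2) At the borderline exponents $a_\gamma\in\{s-2,-2\}$, the logarithmic correction means the stated pointwise bound \eqref{eq:estimate-w} is not literally achieved (bounding $\log(1/r)$ by $r^{-\epsilon}$ only gives a weaker power); the paper's own proof has exactly the same logarithmic term in the borderline case $\gamma=(N-s)(s-2)$. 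For the downstream use (the $o(1)$ estimates \eqref{eq:decay-w} when $\gamma<0$) the log factor is harmless, so neither your argument nor the paper's is affected where it matters.
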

  \begin{proof}
    First observe that, since the r.h.s. of \eqref{w} belongs to $L^{\frac{2N}{N+2}}(\R^N)$, there exists a unique solution $w\in D^{1,2}(\R^N)$ which satisfies  $w> 0$ almost everywhere in $\R^N\setminus \{0\}$ by the Maximum Principle. Moreover, $w$ is radial by uniqueness. 
    Let $\bar w:=r^a w(r)$ with $a=\frac {N-2}2(1-\nu_\gamma)$ as in \eqref{a-gamma}. The function $\bar w$ solves weakly
  \[-(r^{N-1-2a} w')'=\frac {r^{N-1-a-s}}{(1+r)^{N+2-s}} \ \text{ in }\ (0,+\infty).\]
  For any $r_0>0$, we have 
  \begin{equation}\label{eq:w}
  -\bar w'(r)=C_{r_0}r^{1+2a-N}+r^{1+2a-N}\int_{r_0}^r \frac {t^{N-1-a-s}}{(1+t)^{N+2-s}}\, dt ,\end{equation}
where $C_{r_0}=-r_0^{N-2a-1}\bar w'(r_0)$. From \eqref{eq:w}, we get that
\[ -\bar w'(r)\leq C_{r_0}r^{1+2a-N}+r^{1+2a-N}\int_{r_0}^r r^{-3-a}\, dr,\]
which gives, recalling \eqref{a-gamma},
\[-\bar w'(r)\leq  
\begin{cases}
 \left( C_{r_0} + \frac 1{2+a}r_0^{-(2+a)}\right)  r^{1+2a-N}- \frac 1{2+a}r^{a-1-N} & \text{ when } \gamma\neq -2N,\\
    \left( C_{r_0} -\log r_0+\log r       \right)  r^{1+2a-N}  & \text{ when } \gamma=-2N.
  \end{cases}\]
   Since $ w\in D ^{1,2}(\R^N)$, from Ni's radial Lemma (see \cite{Ni}), we know that $ w(r)\leq C_w r^{\frac{2-N}2}$, so that $\bar w(r)\leq Cr^{a-\frac {N-2}2}=Cr^{-\frac{ N-2}2\nu_\gamma}\to 0$, as $r\to +\infty$.
Integrating $\bar w'(r)$ from $r$ to $+\infty$ yields to
\[\bar w(r)\leq\begin{cases}
  \frac {C_{r_0}}{N-2a-2}r^{2+2a-N}+
    \frac {1}{(a-N)(2+a)}r^{a-N} & \text{ when }\gamma< -2N,\\
    \frac 1 {N-2a-2} r^{2+2a-N}\left( C_{r_0}-\log r_0 +\frac 1{N-2-2a}+\log r\right)
    & \text{ when }\gamma=-2N,\\
    \left( \frac{C_{r_0}}{(N-2a-2)}+\frac {r_0^{-(a+2)}}{(a+2)(N-2a-2)}\right) r^{2+2a-N}& \text{ when }\gamma>-2N,
\end{cases}\]
which implies that
\[w(r) \leq\begin{cases}
  \frac {C_{r_0}}{N-2a-2}r^{2+a-N}+
    \frac {1}{(a-N)(2+a)}r^{-N} & \text{ when }\gamma< -2N,\\
    \frac 1 {N-2a-2} r^{2+a-N}\left( C_{r_0}-\log r_0 +\frac 1{N-2-2a}+\log r\right)
    & \text{ when }\gamma=-2N,\\
    \left( \frac{C_{r_0}}{(N-2a-2)}+\frac {r_0^{-(a+2)}}{(a+2)(N-2a-2)}\right) r^{2+a-N}& \text{ when }\gamma>-2N.
\end{cases}\]
Therefore, we have, for $r$ large enough,
\begin{equation}
w(r)r^{N-2}\leq C\max\{r^a,r^{-2}\}.
\end{equation}

\

To estimate the function $w$ near the origin, we proceed the same way. In this case, we get
\[w(r)\leq\begin{cases}
\frac {C_{r_0}}{N-2a-2}r^{-a} +
    \frac {1}{(a-N-s)(2+a+s)}r^{2-s}\leq \left(\frac {C_{r_0}}{N-2a-2}+\d\right)r^{-a} & \text{ when }\gamma < (N-s)(s-2),\\
     \frac 1 {N-2a-2} r^{-a}\left( C_{r_0}-\log r_0 +\frac 1{N-2-2a}-\log r\right)
    & \text{ when }\gamma = (N-s)(s-2),\\
      \left( \frac{C_{r_0}}{(N-2a-2)}+\frac {r_0^{s-2-a}}{(a+2-s)(N-2a-2)}\right) r^{-a}& \text{ when }\gamma >(N-s)(s-2).
\end{cases}
\]
This proves that 
\[w(r)\leq C\max\{r^{-a},r^{2-s}\} \ \text{ as }r\to 0 .\]
\end{proof}

Thanks to the Hardy inequality, one can show the following lemma.
  \begin{lemma}\label{lem-first-eigenv}
    For every $\gamma<\left( \frac {N-2}2\right)^2$ the first eigenvalue of the operator $-\Delta- \frac\gamma {|x|^2}I$ in $D^{1,2}(\R^N)$ is strictly positive.
  \end{lemma}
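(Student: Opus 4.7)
The plan is to show that the quadratic form associated with $-\Delta - \frac{\gamma}{|x|^2}I$ is coercive on $D^{1,2}(\R^N)$ when $\gamma < \left(\frac{N-2}{2}\right)^2$, which gives strict positivity of the first (variational) eigenvalue. The only tool needed is the Hardy inequality recalled in the introduction,
\[
\frac{(N-2)^2}{4}\int_{\R^N}\frac{u^2}{|x|^2}\,dx \leq \int_{\R^N}|\nabla u|^2\,dx,\qquad u\in D^{1,2}(\R^N),
\]
which holds by density from $C^\infty_0(\R^N)$.

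The proof splits into two trivial cases. First, if $\gamma\leq 0$, then $-\gamma\int_{\R^N}\frac{u^2}{|x|^2}\,dx \geq 0$, so
\[
\int_{\R^N}|\nabla u|^2\,dx - \gamma\int_{\R^N}\frac{u^2}{|x|^2}\,dx \geq \int_{\R^N}|\nabla u|^2\,dx,
\]
which is strictly positive for $u\neq 0$ in $D^{1,2}(\R^N)$. Second, if $0<\gamma < \left(\frac{N-2}{2}\right)^2$, applying Hardy directly gives
\[
\int_{\R^N}|\nabla u|^2\,dx - \gamma\int_{\R^N}\frac{u^2}{|x|^2}\,dx \geq \left(1-\frac{4\gamma}{(N-2)^2}\right)\int_{\R^N}|\nabla u|^2\,dx,
\]
where the constant $1-\frac{4\gamma}{(N-2)^2}$ is strictly positive by the hypothesis on $\gamma$.

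Combining both cases, there exists $c=c(\gamma,N)>0$ such that
\[
\int_{\R^N}|\nabla u|^2\,dx - \gamma\int_{\R^N}\frac{u^2}{|x|^2}\,dx \geq c\,\|u\|_{1,2}^2,\qquad u\in D^{1,2}(\R^N),
\]
which says precisely that the first (variational) eigenvalue of $-\Delta-\frac{\gamma}{|x|^2}I$ is bounded below by $c>0$. There is no genuine obstacle here: the lemma is essentially a one-line consequence of Hardy's inequality, and the only point worth stating explicitly is the strictness of the inequality $\gamma<\left(\frac{N-2}{2}\right)^2$, which is exactly what prevents the Hardy constant from being saturated.
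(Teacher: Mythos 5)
Your proof is correct and is precisely the argument the paper intends: the paper gives no explicit proof of this lemma, only the remark ``Thanks to the Hardy inequality, one can show the following lemma,'' and your two-case application of Hardy's inequality is the obvious way to fill that in. The split between $\gamma\le 0$ and $0<\gamma<\left(\frac{N-2}{2}\right)^2$ and the resulting coercivity estimate $Q(u,u)\ge c\,\|u\|_{1,2}^2$ with $c=\min\{1,\,1-\tfrac{4\gamma}{(N-2)^2}\}>0$ is exactly what is needed where the lemma is invoked (positivity of $\mu_0$ in Lemma~\ref{lemma:calcoloIndexCono}).
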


\end{document}